\definecolor{qqqqff}{rgb}{0.,0.,1.}
\definecolor{cqcqcq}{rgb}{0.7529411764705882,0.7529411764705882,0.7529411764705882}
\definecolor{ttqqqq}{rgb}{0.2,0.,0.}
\definecolor{qqqqff}{rgb}{0.,0.,1.}
\definecolor{xdxdff}{rgb}{0.49019607843137253,0.49019607843137253,1.}
\definecolor{zzttqq}{rgb}{0.6,0.2,0.}
\definecolor{cqcqcq}{rgb}{0.7529411764705882,0.7529411764705882,0.7529411764705882}
\definecolor{yqyqyq}{rgb}{0.5019607843137255,0.5019607843137255,0.5019607843137255}
\definecolor{uuuuuu}{rgb}{0.26666666666666666,0.26666666666666666,0.26666666666666666}
\definecolor{xdxdff}{rgb}{0.49019607843137253,0.49019607843137253,1.}
\definecolor{qqqqff}{rgb}{0.,0.,1.}
 \font\ncsc=cmcsc10
 \font\ntt=cmtt12
\newcommand{\PP}{\mathbb{P}}
\newcommand{\NN}{\mathbb{N}}
\newcommand{\ZZ}{\mathbb{Z}}
\newcommand{\RR}{\mathbb{R}}
\newcommand{\CC}{\mathbb{C}}
\newcommand{\dd}{ \text{d} }
\newcommand{\val}{\text{val}}
\newcommand{\fix}{\text{Fix}(\sigma)}
\newcommand{\tg}{ {\mathfrak{t}(\gamma)} }
\newcommand{\hg}{ {\mathfrak{h}(\gamma)} }
\newcommand{\tgs}{ {\mathfrak{t}(\gamma^\sigma)} }
\newcommand{\hgs}{ {\mathfrak{h}(\gamma^\sigma)} }
\newcommand{\gs}{ {\gamma^\sigma} }
\newcommand{\re}{\mathfrak{Re}}
\newcommand{\im}{\mathfrak{Im}}
\newcommand{\parfrac}[2]{\frac{\partial #1}{\partial #2}}
\newcommand{\q}[1]{q^{\frac{#1}{2}}-q^{-\frac{#1}{2}}}
\newcommand{\qd}{\q{1}}
\newtheorem{theo}{Theorem}[section]
\newtheorem{prop}[theo]{Proposition}
\newtheorem{lem}[theo]{Lemma}
\theoremstyle{definition}
\newtheorem{defi}[theo]{Definition}
\theoremstyle{remark}
\newtheorem{remark}[theo]{Remark}
\newenvironment{rem}[1]{
    \begin{remark}#1}{
    \xqed{\blacklozenge}\end{remark}
}
\theoremstyle{remark}
\newtheorem{example}[theo]{Example}
\newenvironment{expl}[1]{
    \begin{example}#1}{
    \xqed{\lozenge}\end{example}
}
\newcommand{\xqed}[1]{
    \leavevmode\unskip\penalty9999 \hbox{}\nobreak\hfill
    \quad\hbox{\ensuremath{#1}}}
\begin{document}
 
 
\title{A Tropical Computation of Refined Toric Invariants}
\author{Thomas Blomme}
\maketitle

\begin{abstract}
In \cite{mikhalkin2017quantum}, G. Mikhalkin introduced a refined count for the real rational curves in a toric surface which pass through certain conjugation invariant set of points on the toric boundary of the surface. Such a set consists of real points and pairs of complex conjugated points. He then proved that the result of this refined count depends only on the number of pairs of complex conjugated points on each toric divisor. Using the tropical geometry approach and the correspondence theorem, he managed to compute these invariants if all the points of the configuration are real. In this paper we address the case when the configuration contains some pairs of conjugated points, all belonging to the same component of the toric boundary.
\end{abstract}

\tableofcontents

\section{Introduction}

\subsection{Curves in a toric surface and their enumeration} 

The paper deals with enumerative problems involving rational curves in toric surfaces. Let $N$ be a $2$-dimensional lattice with basis $(e_1,e_2)$, and let $M$ be the dual lattice, with the dual basis $(e_1^*,e_2^*)$. The lattices $M$ and $N$ are called, respectively, the lattices of characters and co-characters. Let $\Delta=(n_j)\subset N$ be a multiset of lattice vectors, whose total sum is zero. We denote by $m$ the cardinal of $\Delta$. Let $P_\Delta\subset M$ be the convex lattice polygon in $M$ defined up to translation, whose normal vectors oriented outside $P_\Delta$ are the vectors of $\Delta$, counted with multiplicity. This means that the lattice length of a side $E$ of $P_\Delta$ is precisely the sum of the lattice lengths of the vectors of $\Delta$ which are normal to the side $E$. The polygon $P_\Delta$ is well-defined up to translation. The number of lattice points on the boundary of $P_\Delta$ is thus equal to the sum of the lattice lengths of the vectors of $\Delta$. The vectors of $\Delta$ define a fan $\Sigma_\Delta$ in $N_\RR$, only depending on $P_\Delta$. We denote by $\CC\Delta$ the associated toric surface, whose dense complex torus is $\text{Hom}(M,\CC^*)\simeq N\otimes\CC^*$. The toric divisors  of $\CC\Delta$ are in bijection with the sides of the polygon $P_\Delta$. The complex conjugation on $\CC$ defines an involution on $\text{Hom}(M,\CC^*)$, which extends to $\CC\Delta$. This additional structure makes it into a real surface. Its fixed locus, also called the real locus, is denoted by $\RR\Delta$.\\

\begin{expl}
\begin{itemize}[label=-]
\item Let $\Delta_d=\{-e_1^d,-e_2^d,(e_1+e_2)^d\}$, where the exponent $d$ means that the multiplicity in the multiset is $d$. The associated polygon is $P_d=\text{Conv}( 0,d\cdot e_1^*,d\cdot e_2^* )$, the standard triangle of size $d$. The associated toric surface is the projective plane $\CC P^2$, and the real locus is the real projective plane $\RR P^2$.
\item If we set $\square_{a,b}=\{-e_1^b,e_1^b,-e_2^a,e_2^b\}$, the polygon $P_{a,b}$ is the rectangle $\text{Conv}( 0,a\cdot e_1^*,b\cdot e_2^*,a\cdot e_1^*+b\cdot e_2^* )$, and the associated toric surface is $\CC P^1\times\CC P^1$, whose real locus is $\RR P^1\times\RR P^1$.
\end{itemize}
\end{expl}
From now on, and unless stated otherwise, $\Delta\subset N$ is a family of primitive vectors, referred as \textit{toric degree}. A rational curve $\CC P^1\rightarrow\CC \Delta$ of degree $\Delta$ is a curve that admits a parametrization
$$\varphi :t\in\CC \dashrightarrow \chi\prod_{i=1}^m(t-\alpha_i)^{n_i}\in\text{Hom}(M,\CC^*),$$
where the numbers $\alpha_i$ are some complex points inside $\CC$, and $\chi:M\rightarrow\CC^*$ is a complex co-character, \textit{i.e.} an element of $N\otimes\CC^*$. The vectors $n_i$ can also be recovered as the functions $m\mapsto\text{val}_{\alpha_i}(\chi^m)$, where $\chi^m$ denotes the monomial function associated to the character $m\in M$, and $\text{val}_{\alpha_i}$ is the order of vanishing at $\alpha_i$. Such a parametrization is unique up to automorphism of $\CC P^1$, therefore the space of rational curves $\CC P^1\rightarrow\CC\Delta$ is of dimension $m-1$. A rational curve is real if it admits a conjugation invariant parametrization. Equivalently, $\chi$ should have real values, and the numbers $\alpha_i$ are either real, or come in pairs of conjugated points with the same exponent vector $n_i$.\\

In this setting, we choose a generic configuration $\mathcal{P}$ of $m-1$ points inside $\CC\Delta$, and look for rational curves of degree $\Delta$ passing through this configuration. We have a finite number of complex solutions. If we denote by $\mathcal{S}^\CC(\mathcal{P})$ the set of solutions, it is a classical result that the cardinal $|\mathcal{S}^\CC(\mathcal{P})|$ is independent of the point configuration $\mathcal{P}$. Its value is denoted by $N_\Delta$.\\

Over the real field, the situation is different. If we choose a conjugation invariant configuration of points $\mathcal{P}$, called a \textit{real configuration}, meaning that it consists of real points and pairs of conjugated points, and denote by $\mathcal{S}^\RR(\mathcal{P})$ the set of real rational curves passing through $\mathcal{P}$, the value of the cardinal $|\mathcal{S}^\RR(\mathcal{P})|$ depends on the choice of $\mathcal{P}$. However, J-Y. Welschinger \cite{welschinger2005invariants} showed that for del Pezzo surfaces, if the curves are counted with an appropriate sign, the count of real solutions depends only on the number $s$ of pairs of conjugated points in the configuration, yielding an invariant denoted by $W_{\Delta,s}$.\\

While the values of $N_\Delta$ were already known, those of $W_{\Delta,s}$ were computed roughly at the same time their invariance was proven. In \cite{mikhalkin2005enumerative}, G. Mikhalkin proved a correspondence theorem along with a lattice path algorithm that provided a way of computing both invariants $N_\Delta$ and $W_{\Delta,0}$ (only real points in the configuration) using the tropical geometry approach. Later E. Shustin \cite{shustin2002patchworking,shustin2004tropical} also used tropical geometry to compute the Welschinger invariants $W_{\Delta,s}$ for any value of $s$. Both approaches use the fact that curves and hypersurfaces coincide inside surfaces. These approaches do not easily generalize to curve count in any dimension. Since, the correspondence theorem has first been generalized for rational curves in any dimension by T. Nishinou and B. Siebert \cite{nishinou2006toric} using logarithmic stable maps. For other references regarding generalization to higher dimension, the reader can refer to G. Mikhalkin \cite{mikhalkin2006tropical}, and to I. Tyomkin \cite{tyomkin2012tropical}, whose proof largely inspires the proof of Theorem \ref{realization theorem}. Their approaches provide another proof of the previous correspondence theorem, in the case of rational curves in toric surfaces.\\

To compute the values of $N_\Delta$ and $W_{\Delta,0}$, Mikhalkin counts tropical curves solution to the analog tropical enumerative problem with two specific choices of integer multiplicity. Following his computation, F. Block and L. G\"ottsche \cite{block2016refined} proposed a way of combining these integer multiplicities by refining them into a Laurent polynomial multiplicity, which gives back the values of $N_\Delta$ and $W_{\Delta,0}$ when evaluated at $\pm 1$ respectively. This new refined multiplicity was proved in \cite{itenberg2013block} to give a tropical invariant. This choice of multiplicity seems to appear in a growing number of situations, while its meaning in classical geometry remains quite mysterious. Conjecturally, this invariant is suspected to coincide with the refinement of Severi degrees by the $\chi_{-y}$-genera proposed by L. G\"ottsche and V. Shende in \cite{gottsche2014refined}. This invariant bears also similarities with some Donaldson-Thomas wall-crossing invariants considered by M. Kontsevich and Y. Soibelman \cite{kontsevich2008stability}.\\

In \cite{mikhalkin2017quantum}, Mikhalkin proved that the refined multiplicity could also be used to count real rational curves passing through a fixed symmetric real configuration of points on the toric boundary of a surface, according to the value of a so-called \textit{quantum index}, count for which he proved the classical invariance. The details of these results are explained below.

\subsection{Quantum indices of real curves}

In \cite{mikhalkin2017quantum}, Mikhalkin introduced a quantum index for oriented real curves. We now recall the definition of the quantum index.\\

Let $\varphi:\CC C\rightarrow\CC\Delta$ be a real algebraic curve, with $\CC C$ a smooth Riemann surface, and $\RR C$ the real locus. 
We say that this curve is of \textit{type $I$} if $\CC C\backslash\RR C$ is disconnected. A choice of a connected component $S$  of $\CC C\backslash\RR C$ induces an orientation of its boundary $\RR C$, this orientation is 
called a \textit{complex orientation}. The choice of the other component leads to the opposite orientation. When $S$ is given, we denote by $\overrightarrow{C}$ the curve endowed with the complex orientation induced by $S$.\\ 

The map $n\otimes z\mapsto 2 n\otimes z=n\otimes z^2$ from $N\otimes\CC^*$ to itself extends to a map $\text{Sq}$ from $\CC\Delta$ to itself, called the square map. In coordinates it is just the map that squares each coordinate. We say that $\CC C$ has real or purely imaginary intersection points with the toric divisors if the images of the intersection points with the toric boundary under the square map are real, \textit{i.e.} belong to $\RR\Delta$. We have the logarithmic map
$$\text{Log}:n\otimes z\in N\otimes\CC^*\longmapsto n\otimes\text{Log}|z|\in N_\RR=N\otimes\RR.$$
In a basis of $N$, it is the logarithm of the absolute value coordinate by coordinate. Let $\omega$ be a generator of $\Lambda^2 M$, \textit{i.e.} a non-degenerated $2$-form on $N$, that extends to a volume form on $N_\RR$. By pulling back the volume form to $N\otimes\CC^*$, we can define the logarithmic area of $S$:
$$\mathcal{A}_{\text{Log}}(S)=\int_{\varphi(S)}\text{Log}^*\omega.$$
This area depends on the complex orientation up to a change of sign.

\begin{theo}[Mikhalkin\cite{mikhalkin2017quantum}]
Let $\varphi : \CC C\rightarrow\CC\Delta$ be a real type I curve with real or purely imaginary intersection points with the toric divisors. Let $S$ be one of the two connected components of $\CC C\backslash\RR C$. Then, there exists a half-integer $k(S,\varphi)$, called the quantum index of $S$, or quantum index of $\overrightarrow{C}$, such that 
$$\mathcal{A}_{\text{Log}}(S)=\int_{\varphi(S)}\text{Log}^*\omega=k(S,\varphi)\pi^2.$$
\end{theo}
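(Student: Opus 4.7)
The plan is to rewrite $\mathcal{A}_{\text{Log}}(S)$ as a topological count for the argument map $\Phi:=\text{Arg}\circ\varphi$, and to extract the half-integer by a degree computation on the real torus $T:=N_\RR/2\pi N$. Choose a basis of $N$ realizing $\omega=\dd\theta_1\wedge\dd\theta_2$, and let $(z_1,z_2)$ denote the corresponding coordinates on $N\otimes\CC^*$. The holomorphic $2$-form $\Omega:=\frac{\dd z_1}{z_1}\wedge\frac{\dd z_2}{z_2}$ decomposes, via $\dd z_j/z_j=\dd\ln|z_j|+i\,\dd\arg z_j$, as $\re\,\Omega=\text{Log}^*\omega-\text{Arg}^*\omega$. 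Since $\CC C$ has complex dimension one, the pullback $\varphi^*\Omega$ of this holomorphic $(2,0)$-form vanishes identically. After checking convergence near the finitely many points $c_i\in\CC C$ where $\varphi$ meets the toric boundary, via the local expansion $\varphi(t)\sim\chi(t-\alpha_i)^{n_i}$, this yields
\[
\mathcal{A}_\text{Log}(S)\;=\;\int_S\Phi^*\omega.
\]

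Next I would analyze the map $\Phi:\overline S\to T$, which is well-defined away from the $c_i$. On $\RR C\setminus\{c_i\}$ the values of $\Phi$ lie in the $2$-torsion subgroup $T[2]=\{0,\pi\}^2$, so each open real segment maps to a single element of $T[2]$ by continuity. Excising a small half-disk around each $c_i\in\RR C$, the local expansion shows that the bounding half-circle is mapped by $\Phi$ to an arc of length $\pi$ in the direction $n_i$, with endpoints in $T[2]$ by the real or purely imaginary hypothesis on the toric-boundary intersections. Composing with the doubling map $\text{Sq}:\theta\mapsto 2\theta$ on $T$, which satisfies $\text{Sq}^*\omega=4\omega$, every such arc becomes a loop based at $0\in T$ in the class $n_i\in N\simeq H_1(T;\ZZ)$, while the real segments collapse to $0$. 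Since $\sum_i n_i=0$ by the definition of a toric degree, the pushforward boundary is a nullhomologous $1$-cycle in $T$; it bounds a canonical $2$-chain $D$, obtained by concatenating the vectors $2\pi n_i$ into a closed polygon $P\subset N_\RR$ and projecting. The sum $(\text{Sq}\circ\Phi)_*\overline S+D$ is then a $2$-cycle in $T$, so
\[
4\int_S\Phi^*\omega\;+\;\int_D\omega\;\in\;4\pi^2\,\ZZ.
\]
A direct shoelace computation with vertices in $2\pi N$ shows that $\int_D\omega$, the signed area of $P$, lies in $2\pi^2\ZZ$. Substituting gives $\mathcal{A}_\text{Log}(S)\in\tfrac{1}{2}\pi^2\ZZ$, i.e.\ a half-integer multiple of $\pi^2$.

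The main obstacle is the careful treatment of $\Phi$ near the toric-boundary points: proving that $\Phi$ extends continuously to each excision half-circle with the claimed winding in direction $n_i$ and endpoints in $T[2]$ (this is where the hypothesis $\text{Sq}(\varphi(c_i))\in\RR\Delta$ is essential); justifying that the improper integral defining $\mathcal{A}_\text{Log}(S)$ converges uniformly as the excision radius tends to $0$; and verifying that different choices of filling $D$ change $\int_D\omega$ only by integer multiples of $4\pi^2=\int_T\omega$, so that the half-integer $k(S,\varphi)$ is a well-defined invariant of $(S,\varphi)$. Once these analytic and combinatorial checks are in place, the half-integrality becomes a formal consequence of the half-integrality of lattice polygon areas.
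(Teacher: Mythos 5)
First, a remark on context: the paper does not prove this theorem itself; it is quoted from Mikhalkin's work, and the only hint given is the identity $\mathcal{A}_{\text{Log}}(S)=\mathcal{A}_{2\arg}(S)$ coming from the vanishing of $\frac{\dd z_1}{z_1}\wedge\frac{\dd z_2}{z_2}$ on a curve. Your strategy --- pass to the argument torus via that identity, then run a degree/homology count for $2\arg\circ\varphi$ on $\overline{S}$ with the boundary cycle filled by a lattice polygon --- is indeed the right one, and the real-locus part of it (values in $T[2]$, half-circle arcs of direction $n_i$, shoelace with vertices in $2\pi N$ giving area in $2\pi^2\ZZ$) is correct.

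There is, however, a genuine gap: you only excise half-disks around the boundary points $c_i\in\RR C$, but when the curve has non-real intersection points with the toric divisors --- exactly the situation this paper needs --- the points $\beta_j$ of each conjugate pair lie in the \emph{interior} of $S$ (one in $S$, one in $\overline{S}^c$), and $\Phi$ is undefined there. Without excising full disks around them, $(\text{Sq}\circ\Phi)_*\overline{S}$ is not a chain and your cycle $(\text{Sq}\circ\Phi)_*\overline{S}+D$ does not close up. Once you do excise them, each such circle contributes a loop in class $2n_j$ (so nullhomology still holds, since $\sum_{\text{real}}n_i+\sum_j 2n_j=\sum_{\Delta}n_i=0$ because conjugate points share the exponent $n_j$), but this loop is \emph{not} based at $0\in T$: it is a doubled geodesic in direction $n_j$ sitting at a transverse offset determined by $2\arg$ of the moment $\varphi^*\chi^{\iota_{n_j}\omega}|_{\beta_j}$. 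Your polygon $D$ only fills loops based at $0$, so one must add cylinders between each offset geodesic and its translate through $0$, and the area of such a cylinder lies in $2\pi^2\ZZ$ precisely because the offset $\omega(n_j,2\arg u(\beta_j))$ is a multiple of $\pi$ --- i.e.\ precisely because the moment at $\beta_j$ is real or purely imaginary. Note that at the real points $c_i\in\RR C$, where you invoke the hypothesis, it is automatic (the moment of a real puncture of a real curve is real, as the paper itself remarks); the hypothesis does all of its work at the interior punctures you omitted. As written, your argument proves the theorem only for curves of toric type $I$ (all boundary points real); to cover the general statement you must add the interior excisions, track the basepoints of the resulting loops, and insert the cylinder-area estimate above.
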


This theorem concerns all real type I curves but will only be of interest to us in the case of rational curves, which are automatically of type I as long as their real part is non-empty. So, they have a well-defined quantum index provided that the non-real intersection points of $\varphi(\CC C)$ with the toric boundary have a purely imaginary coordinate.\\ 

Let $C$ be a real type $I$ curve, and let $S$ be a connected component of the complement of the real part of $C$. The area of the coamoeba of the curve is defined as follows: if $2\arg : \varphi(S)\subset N\otimes\CC^*\rightarrow  N\otimes\RR/\pi\ZZ$ is twice the argument map, and $\omega_\theta$ denotes the volume form on $N\otimes\RR/\pi\ZZ$ induced by $\omega$, then
$$\mathcal{A}_{\arg}=\int_{\varphi(S)} (2\arg)^*\omega_\theta.$$
This area coincides with $\mathcal{A}_\text{Log}$. Thus, it is equal to the quantum index whenever it is defined. Moreover, Mikhalkin provided a way to systematically compute the quantum index in the case the curve is of \textit{toric type $I$}. For rational curves that means that all the intersection points with the toric boundary are real. In order to compute the quantum index in the case 
the curve has non-real intersection points, we slightly enlarge the computation approach.\\

Finally, Mikhalkin proved that due to the additive character of the area, the quantum index is also additive. Therefore, if we consider a family of curves that admits a tropical limit, then close to the tropical limit, provided that the real oriented curves in the family have a well-defined quantum index, the quantum indices of the curves are constant, equal to the sum of the quantum indices of the curves associated to the vertices of the resulting tropical curve. This statement is made precise later, otherwise, see \cite{mikhalkin2017quantum}.\\


Using this approach, the computation of the quantum indices of curves close to the tropical limit is reduced to a few local cases. In \cite{mikhalkin2017quantum}, the only needed local case is the one of a rational curve with three real boundary points. In this paper, we also have to consider the additional case of a rational curve with two real and two complex conjugated intersection points with the toric boundary.

\subsection{Refined enumerative geometry} 
Now, let $\mathcal{P}_0$ be a real configuration of $m$ points taken on the toric boundary of $\CC\Delta$, and such that each irreducible component of the toric boundary contains a number of points equal to the integral length of the corresponding side of the polygon $\Delta$, \textit{e.g.} for the projective plane, there are $d$ points per axis. Let $r$ be the number of real points in $\mathcal{P}_0$, and $s$ the number of pairs of complex conjugated points in $\mathcal{P}_0$, so that $r+2s=m$. If the polygon $\Delta$ has $p$ sides, let $\lambda_i$ be the number of pairs of conjugated points on the side $i$, so that $\sum_1^p\lambda_i=s$. We moreover assume that the pairs of complex conjugated points are purely imaginary, and that there is at least one real point in the configuration. The Vi\`ete formula ensures that there exists a curve not containing the boundary as a component and passing through the configuration $\mathcal{P}_0$ only if the configuration is subject to the \textit{Menelaus condition}, which we therefore assume: the product of the coordinates of the points is equal to $(-1)^m$. We finally enlarge $\mathcal{P}_0$ into $\mathcal{P}$ by adding the opposite of the real points. The opposite of the non-real
points, being their conjugated since they are purely imaginary, are already there. We obtain a symmetric configuration of points which is invariant by conjugation.\\

Let $\mathcal{S}(\mathcal{P})$ be the set of oriented real rational curves of degree $\Delta$ that contain either $p_i$ or $-p_i$ for every $p_i\in\mathcal{P}_0$. It means that the curve passes through one point of each pair of opposite real points, and through both points of each pair of complex conjugated points since if it passes through one, it also passes through its conjugate. This set of curves could also be defined in the following way: if $\text{Sq}:\CC\Delta\rightarrow\CC\Delta$ denotes the square map, then $\mathcal{S}(\mathcal{P})$ is the set of oriented real rational curves $C$ of degree $\Delta$ such that $\text{Sq}(\RR C)$ contains $\text{Sq}(\mathcal{P}_0)$, which is a configuration of real points, since the square of a purely imaginary point is a negative real one. All the oriented curves of $\mathcal{S}(\mathcal{P})$ have a well-defined quantum index. For an oriented curve $\overrightarrow{C}\in\mathcal{S}(\mathcal{P})$, let $\psi:\CC P^1\rightarrow \CC C$ be a parametrization of $C$, with orientation given by $S\subset\CC P^1\backslash\RR P^1$. We consider the composition of the parametrization $\psi$ with the logarithm map, restricted to the real locus:
$$\text{Log}|\psi|:\RR P^1\rightarrow N\otimes\RR=N_\RR.$$
Its image is $\text{Log}(\psi(\RR P^1))\subset N_\RR$. We now consider the logarithmic Gauss map that associates to each point of $\RR P^1$ the tangent direction to $\text{Log}(\psi(\RR P^1))$, which is a point in $\PP^1(N_\RR)\simeq\RR P^1$, oriented by $\omega$. Since both copies of $\RR P^1$ are oriented, one can consider the degree $\text{Rot}_\text{Log}(\overrightarrow{C})$ of the logarithmic Gauss map, \textit{i.e. }when the domain $\RR P^1$ is oriented by the choice of the complex orientation of $C$, and the target $\mathbb{P}^1(N_\RR)\simeq\RR P^1$ oriented by $\omega$. Let $\sigma(\overrightarrow{C})=(-1)^\frac{m-\text{Rot}_\text{Log}(\overrightarrow{C})}{2}$, which is equal to $\pm 1$. We then set
$$R_{\Delta,k}(\mathcal{P})=\sum_{C\in\mathcal{S}_k(\mathcal{P})} \sigma(\overrightarrow{C})\in\ZZ,$$
where $\mathcal{S}_k(\mathcal{P})$ denotes the subset of $\mathcal{S}(\mathcal{P})$ of oriented curves having quantum index $k$. Finally, we define
$$R_\Delta(\mathcal{P})=\frac{1}{4}\sum_{k}R_{\Delta,k}(\mathcal{P})q^k\in \ZZ\left[ q^{\pm\frac{1}{2}}\right].$$

\begin{theo}[Mikhalkin\cite{mikhalkin2017quantum}]
As long as the configuration $\mathcal{P}$ is generic, the Laurent polynomial $R_{\Delta}(\mathcal{P})$ only depends on $\lambda=(\lambda_1,\dots,\lambda_p)$.
\end{theo}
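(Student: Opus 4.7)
The natural strategy is a deformation and wall-crossing argument. The space of real configurations $\mathcal{P}_0$ with a fixed distribution $\lambda=(\lambda_1,\dots,\lambda_p)$ of conjugate pairs on the boundary sides, subject to the Menelaus constraint, is a real manifold; the plan is to show that the Laurent polynomial $R_\Delta(\mathcal{P})$ is locally constant on the generic locus of this manifold, and that it extends across each codimension-one wall. Since one can move one real point at a time along its toric divisor (rebalancing Menelaus by moving another real point on the same divisor), this parameter space is connected, and local constancy will yield the global invariance claim.

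First I would check local constancy on the generic stratum. On this stratum, $\mathcal{S}(\mathcal{P})$ varies smoothly with $\mathcal{P}$, every curve $\overrightarrow{C}\in\mathcal{S}(\mathcal{P})$ is immersed, and the incidence problem at the boundary points is transverse. The quantum index $k(\overrightarrow{C})=\mathcal{A}_{\mathrm{Log}}(S)/\pi^2$ depends continuously on the curve in the family, but takes values in $\tfrac{1}{2}\ZZ$, so it is locally constant. The degree $\mathrm{Rot}_{\mathrm{Log}}(\overrightarrow{C})$ of the logarithmic Gauss map is an integer-valued continuous function on the family, hence also locally constant, and so is $\sigma(\overrightarrow{C})$. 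Therefore each curve contributes a fixed term $\sigma(\overrightarrow{C})\,q^{k(\overrightarrow{C})}$ as $\mathcal{P}$ varies in the generic stratum, and the sum $R_\Delta(\mathcal{P})$ is locally constant.

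Next I would classify the walls, i.e.\ the codimension-one configurations at which the above transversality fails. Up to genericity they are: (a) two distinct curves of $\mathcal{S}(\mathcal{P})$ merge into a single curve which becomes tangent to a boundary divisor at a marked point (a ``real'' wall); (b) a pair of real curves coalesces and emerges as a pair of complex conjugate curves (a ``complex pairing'' wall); (c) an irreducible curve of $\mathcal{S}(\mathcal{P})$ degenerates to a reducible real curve whose components each carry a subset of the boundary marked points compatible with their own Menelaus conditions. At walls of type (a) the two merging curves have the same logarithmic area by continuity, hence the same quantum index, and a local orientation computation (using that the complex orientation of $\overrightarrow{C}$ flips when a real branch passes through tangency to a boundary divisor) shows that the two signs $\sigma$ are opposite, so their contributions cancel in $R_{\Delta,k}$. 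Walls of type (b) are handled similarly: as the two real curves become complex conjugate they leave $\mathcal{S}(\mathcal{P})$, and the pair contributes $\sigma+(-\sigma)=0$ just before the wall, so $R_\Delta$ is continuous across it.

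The main obstacle is wall (c), reducible degenerations. Here one exploits the additivity of the quantum index under tropical-type degenerations (the additivity statement mentioned just before the theorem) to write $k(\overrightarrow{C})$ as the sum of quantum indices of its irreducible components, and a parallel additivity of $\mathrm{Rot}_{\mathrm{Log}}$ modulo $2$ to write $\sigma(\overrightarrow{C})$ as the product of the component signs. The count on each side of the wall then reorganizes as a product of local refined counts attached to each component, and one must show that the differences on the two sides of the wall cancel in families. The key input is that because all marked points lie on the toric boundary, the combinatorial type of the reducible curve and the distribution of marked points among components is very constrained by Menelaus on each component, which forces the degeneration to appear in cancelling pairs. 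Once these three wall computations are in place, local constancy on the generic locus extends across all walls and $R_\Delta(\mathcal{P})$ depends only on $\lambda$.
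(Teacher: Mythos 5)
This theorem is not proved in the paper: it is imported from \cite{mikhalkin2017quantum} and used as a black box, so there is no internal proof to compare against. Your wall-crossing strategy is, in broad outline, the one Mikhalkin actually follows (local constancy of $k$ and $\mathrm{Rot}_{\mathrm{Log}}$ on the generic stratum, then a case analysis of codimension-one degenerations along a path of configurations), but as written the argument has genuine gaps.

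The most serious one is structural. Nothing in your proof ever uses the fact that $\mathcal{S}(\mathcal{P})$ is defined by the \emph{symmetric} configuration $\mathcal{P}=\{\pm p_i\}$ rather than by $\mathcal{P}_0$ alone: every step you give would apply verbatim to the count of curves through $\mathcal{P}_0$, and that count is \emph{not} invariant --- the paper explicitly remarks that passing to the symmetric configuration is essential for the proof. The wall you are missing is the one where two source points $\alpha_i,\alpha_j$ with linearly independent exponent vectors $n_i,n_j$ collide, so that the image curve passes through a torus-fixed point (a corner of $P_\Delta$). Across such a wall the curve does not disappear; its intersection point with a divisor can re-emerge at $-p_i$ instead of $p_i$. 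With only $\mathcal{P}_0$ the solution set jumps with no cancelling partner; with the symmetric configuration the curve simply stays in $\mathcal{S}(\mathcal{P})$. Your classification (a)--(c) omits this degeneration entirely, and it is precisely the point of the symmetrization and of the normalizing factor $\tfrac14$.

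Two further issues. First, at your walls (a) and (b) the cancellation is asserted, not established: the actual content of the theorem is the local computation showing that two merging real solutions have the same quantum index while their logarithmic Gauss degrees differ by $2$ (so that $\sigma$ takes opposite values); ``the complex orientation flips at a tangency'' is not an argument, and one must also rule out (or handle) degenerations such as a real cusp of the parametrization, where a single curve persists but $\mathrm{Rot}_{\mathrm{Log}}$ could a priori jump. Second, for wall (c) you invoke the additivity of the quantum index ``under tropical-type degenerations''; that statement concerns tropical limits of families over $\CC((t))$, not a reducible degeneration inside a fixed toric surface, and the claim that Menelaus forces reducible degenerations to occur in cancelling pairs is not substantiated (a correct treatment starts from the dimension count showing a component can carry $m_i$ of the constraint points only when those points themselves satisfy a Menelaus relation, which happens at isolated times along the path). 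Until these local computations are carried out, the proposal is a plausible plan rather than a proof.
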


The above Laurent polynomial only depending on $\lambda$ is denoted by $R_{\Delta,\lambda}$.

\begin{rem}
It is important for the result and for its proof to consider not only curves passing through $\mathcal{P}_0$ but also through the symmetric configuration $\mathcal{P}$, otherwise the invariance might fail.
\end{rem}

In the case of a totally real configuration of points, \textit{i.e.} $\lambda_i=0$, using the correspondence theorem, Mikhalkin proved that the invariant $R_{\Delta,(0)}$ coincides, up to a normalization, with the tropical refined invariant $N_\Delta^{\partial,\text{trop}}$. This invariant is obtained as follows. (For a more complete description, see section \ref{tropical enumerative inv}.) We consider tropical curves of degree $\Delta\subset N$. For each vector $n_j$ in $\Delta$, one chooses an oriented line directed and oriented by $n_j$, and such that the configuration of lines is generic. Then we count rational tropical curves of degree $\Delta$ whose unbounded ends are contained in the chosen lines, using the Block-G\"ottsche multiplicities from \cite{block2016refined}. The result does not depend on the chosen configuration, and is denoted by $N_\Delta^{\partial,\text{trop}}$.

\begin{theo}[Mikhalkin\cite{mikhalkin2017quantum}]
One has
$$R_{\Delta,(0)}=(q^\frac{1}{2}-q^{-\frac{1}{2}})^{m-2}N_\Delta^{\partial,\text{trop}}.$$
\end{theo}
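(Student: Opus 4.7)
The proof follows Mikhalkin's strategy of reducing the classical enumeration to a tropical one via the correspondence theorem, and then exploiting the additivity of the quantum index to perform a vertex-by-vertex analysis. I would start by choosing a one-parameter family of real configurations $\mathcal{P}(t)$ on the toric boundary of $\CC\Delta$, degenerating as $t\to 0$ to a generic tropical configuration of points on the unbounded edges of the tropical enumerative problem. By the correspondence theorem (Mikhalkin, Nishinou--Siebert, Tyomkin), every real rational curve in $\mathcal{S}(\mathcal{P}(t))$ tropicalizes for small $t$ to a rational tropical curve $T$ of degree $\Delta$ solving the tropical problem, and conversely each such $T$ admits a finite number of real lifts. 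A lift is obtained by assembling, at each trivalent vertex $V$ of $T$, a local real rational curve with three real boundary points in the prescribed directions $n_1^V, n_2^V, n_3^V$.

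For a real lift $\overrightarrow{C}$ close to the tropical limit, Mikhalkin's additivity of the logarithmic area gives
$$k(\overrightarrow{C}) = \sum_{V\in T} k_V(\overrightarrow{C}_V),$$
and a parallel analysis of the logarithmic Gauss map yields that $\sigma(\overrightarrow{C})$ factorizes, up to a global sign depending only on $T$, as $\prod_V \sigma_V(\overrightarrow{C}_V)$. Consequently the contribution of a single tropical curve $T$ to $R_{\Delta,(0)}(\mathcal{P})$ factorizes as a product over its vertices:
$$\sum_{\overrightarrow{C}\text{ lifting }T}\sigma(\overrightarrow{C})\,q^{k(\overrightarrow{C})} \;=\; \prod_{V\in T}\left(\sum_{\overrightarrow{C}_V}\sigma_V(\overrightarrow{C}_V)\,q^{k_V(\overrightarrow{C}_V)}\right).$$

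The heart of the argument is then the local computation at a trivalent vertex $V$ of multiplicity $m_V=|\det(n_1^V,n_2^V)|$. Using the explicit parametrization $\psi:t\mapsto\chi\, t^{n_1^V}(1-t)^{-n_2^V}$, one enumerates the real rational curves through three prescribed real points on the toric boundary (classified by the compatible sign choices of the real co-character $\chi\in N\otimes\RR^*$), computes the logarithmic area of each oriented component, and determines the parity of the logarithmic rotation number. The claim to establish is that the weighted local sum equals $q^{m_V/2}-q^{-m_V/2}$, which is precisely the numerator of the Block--G\"ottsche multiplicity $[m_V]_q=(q^{m_V/2}-q^{-m_V/2})/(q^{1/2}-q^{-1/2})$.

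Once this local identity is in hand, the global answer is immediate: a rational tropical tree of degree $\Delta$ with only trivalent internal vertices has exactly $m-2$ such vertices, so
$$R_{\Delta,(0)}(\mathcal{P}) \;=\; \sum_T\prod_{V\in T}\bigl(q^{m_V/2}-q^{-m_V/2}\bigr) \;=\; (q^{1/2}-q^{-1/2})^{m-2}\sum_T\prod_{V\in T}[m_V]_q \;=\; (q^{1/2}-q^{-1/2})^{m-2}\,N_\Delta^{\partial,\text{trop}}.$$
The main obstacle is the local vertex computation together with the careful bookkeeping of signs when passing from the additivity of $k$ and $\mathrm{Rot}_{\mathrm{Log}}$ to the multiplicativity of $\sigma$; this is also where the symmetric completion $\mathcal{P}\supset\mathcal{P}_0$ and the normalizing factor $\frac{1}{4}$ in the definition of $R_\Delta$ play their role, as they are what make the global sign depend only on the tropical data and not on the individual lift.
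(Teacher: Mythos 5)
Your outline is essentially the approach the paper itself uses for this statement: the theorem is quoted from Mikhalkin, but it is recovered as the $s=0$ case of Proposition \ref{firstordersol}, and your degeneration--correspondence--local-vertex strategy matches the paper's ``alternative proof'' of that proposition (and Mikhalkin's original argument) almost line for line, including the local contribution $q^{m_V/2}-q^{-m_V/2}$ at each of the $m-2$ trivalent vertices.

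The one place where you assert more than you justify is the factorization
$\sum_{\overrightarrow{C}}\sigma(\overrightarrow{C})q^{k(\overrightarrow{C})}=\prod_V\bigl(\sum_{\overrightarrow{C}_V}\sigma_V q^{k_V}\bigr)$.
Additivity of the quantum index and of $\mathrm{Rot}_{\mathrm{Log}}$ does not by itself give this, because the set of real lifts of a tropical curve $T$ is \emph{not} a product of independent local choices at the vertices: local curves can only be glued along a bounded edge when their phases (quadrants of $(\RR^*)^2$ at the corresponding boundary point) agree, and the number of admissible gluings depends on the parity of the lattice length of the edge (two gluings for an even edge, a unique gluing but only half the phase pairs compatible for an odd edge). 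The paper handles this by an edge-by-edge induction, pruning one extremal vertex at a time and checking in each parity case that the signed count picks up exactly a factor $(q^{m_V/2}-q^{-m_V/2})$ while the overall factor of $4$ (absorbed by the $\tfrac14$ in the definition of $R_\Delta$ and the deck group $\{\pm1\}^2$) is preserved. You correctly flag this as the main obstacle, but in a complete write-up this gluing analysis is the substantive step, not bookkeeping: if the phase/gluing counts did not conspire, the product formula would simply be false. (Minor point: your local model should read $t\mapsto \chi\, t^{n_1^V}(t-1)^{n_2^V}$ so that the third end carries $n_3^V=-n_1^V-n_2^V$; as written the exponents do not balance.)
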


\begin{rem}
The exponent $m-2$ corresponds to the number of vertices of a tropical curve involved in the enumerative problem defining $N_\Delta^{\partial\text{trop}}$. Thus, the normalization $(q^\frac{1}{2}-q^{-\frac{1}{2}})^{m-2}$ amounts to clear the denominators of the Block-G\"ottsche multiplicities from \cite{block2016refined}. We recall their definition in section \ref{tropical enumerative inv}.
\end{rem} 

The results of Mikhalkin reduce the computation of the invariants $R_{\Delta,(0)}$ to a tropical computation of $N_\Delta^{\partial,\text{trop}}$. In this paper we prove that the computation of the $R_{\Delta,(\lambda_1,0,\dots,0)}$, \textit{i.e.} in the case of a unique boundary component carrying complex conjugated points, can also be carried out using tropical geometry.\\

More precisely, we have the following theorem. Let $\Delta=(n_j)\subset N$ still be a family of primitive vectors whose total sum is zero, allowing us to consider curves of degree $\Delta$ inside the toric surface $\CC\Delta$. We fix a side $E_1$ of $P_\Delta$, normal to a vector $n_1\in\Delta$. Let $l$ be the lattice length of $E_1$, which is the multiplicity of $n_1$ in $\Delta$. For $s\leqslant\frac{l}{2}$, we denote by $\Delta(s)$ the family $(\Delta\backslash\{n_1^{2s}\})\cup\{(2n_1)^s\}$, where $2s$ copies of $n_1$ are replaced by $s$ copies of $2n_1$. The degree $\Delta(s)$, not consisting only of primitive vectors anymore, allows us to consider tropical curves of degree $\Delta(s)$, and the associated refined invariant $N_{\Delta(s)}^{\partial,\text{trop}}$.

\begin{theo}
\label{theorem paper}
One has
$$R_{\Delta,(s,0,\dots,0)}=\frac{(\qd)^{m-2-s_1}}{(q-q^{-1})^{s_1}}N^{\partial,\text{trop}}_{\Delta(s)} =\frac{(\qd)^{m-2-2s}}{(q^\frac{1}{2}+q^{-\frac{1}{2}})^{s}}N^{\partial,\text{trop}}_{\Delta(s)}  $$
\end{theo}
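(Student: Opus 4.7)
The plan is to extend Mikhalkin's correspondence-theorem proof of the $s=0$ case of Theorem~\ref{theorem paper} to the mixed real/conjugated configuration, by tropicalizing the symmetric configuration $\mathcal{P}$ and reducing the refined count to local contributions at the vertices of the resulting tropical curves.

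\textbf{Tropical degeneration.} The first step is to realize $\mathcal{P}$ as the special fiber of a one-parameter family $\mathcal{P}_t$ of symmetric configurations whose tropicalization collapses each of the $s$ conjugated pairs on the first toric divisor to a single tropical boundary point carrying a weight-$2$ unbounded end in direction $n_1$; the remaining $m-2s$ real boundary points give $m-2s$ standard weight-$1$ ends. Any real curve of $\mathcal{S}(\mathcal{P}_t)$ degenerates to a rational tropical curve $\Gamma$ of degree $\Delta(s)$ through the tropicalized configuration. Genericity of $\mathcal{P}_0$ forces $\Gamma$ to be trivalent with the $s$ weight-$2$ ends at $s$ distinct vertices, which I call \emph{special}; the other $m-2s-2$ vertices are called \emph{standard}, for a total of $m-s-2$ vertices.

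\textbf{Correspondence and vertex-wise decomposition.} Next one proves a correspondence theorem in the spirit of Tyomkin's \cite{tyomkin2012tropical}, stating that each tropical solution $\Gamma$ lifts to a prescribed number of real oriented curves in $\mathcal{S}(\mathcal{P})$, and that tropicalization identifies $\mathcal{S}(\mathcal{P})$ with the set of pairs $(\Gamma,\text{local lifting datum})$. By the additivity of the quantum index recalled in the introduction, together with the factorization of the sign $\sigma$ as a product of local signs over vertices, the refined count decomposes as
\[
R_{\Delta,(s,0,\dots,0)}=\sum_{\Gamma}\prod_{V\in\Gamma}\mu^{\mathrm{ref}}(V),\qquad \mu^{\mathrm{ref}}(V):=\sum_{\overrightarrow{C}_V}\sigma(\overrightarrow{C}_V)\,q^{k(\overrightarrow{C}_V)},
\]
the inner sum running over real oriented local lifts at $V$. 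Mikhalkin's computation in \cite{mikhalkin2017quantum} at a standard trivalent vertex of multiplicity $m_V$ yields $\mu^{\mathrm{ref}}(V)=q^{m_V/2}-q^{-m_V/2}$.

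\textbf{Main obstacle: the special vertex.} The heart of the proof is to establish the identity
\[
\mu^{\mathrm{ref}}(V)=\frac{q^{m_V/2}-q^{-m_V/2}}{q-q^{-1}}
\]
at each special vertex $V$. To prove it, I parametrize explicitly the real rational curves with Newton polygon dual to $V$ -- rational curves in the appropriate toric surface with two real and two purely imaginary conjugated boundary intersection points -- passing through one interior real marked point, compute the logarithmic area of each choice of complex orientation via the extension of the quantum-index computation to curves with purely imaginary boundary intersections alluded to in the quantum-index subsection, and read off the sign $\sigma$ from the rotation number of the logarithmic Gauss map. Granting this local identity, the product over the $s$ special and $m-2s-2$ standard vertices of $\Gamma$, followed by the sum over tropical curves, gives
\[
R_{\Delta,(s,0,\dots,0)}=\frac{1}{(q-q^{-1})^s}\sum_{\Gamma}\prod_{V}(q^{m_V/2}-q^{-m_V/2})=\frac{(\qd)^{m-2-s}}{(q-q^{-1})^s}N^{\partial,\text{trop}}_{\Delta(s)},
\]
since the Block--G\"ottsche multiplicity of $\Gamma$ equals $\prod_V(q^{m_V/2}-q^{-m_V/2})/(\qd)^{m-s-2}$; the second form then follows from the identity $q-q^{-1}=\qd(q^{1/2}+q^{-1/2})$. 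Establishing the real correspondence theorem for weight-$2$ ends arising from conjugated pairs is a secondary technical hurdle, expected to follow by combining Tyomkin's deformation-theoretic argument with a careful analysis of the real structure near such ends.
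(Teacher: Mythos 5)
Your proposal follows essentially the same route as the paper: tropicalize the symmetric configuration so that each conjugated pair yields a weight-$2$ end of a $\Delta(s)$-curve, prove a Tyomkin-style real correspondence theorem, and reduce the refined count to per-vertex contributions, the key new ingredient being the local identity $\mu^{\mathrm{ref}}(V)=(q^{m_V/2}-q^{-m_V/2})/(q-q^{-1})$ at the quadrivalent vertices, which is exactly the coamoeba computation the paper carries out in Section 3.3 for the curves $\psi_k$ with two real and two purely imaginary boundary intersections. Two small points to fix when fleshing this out: the local problem at a special vertex imposes only boundary conditions (the conjugated pair plus one real boundary point, the remaining real point being forced by Menelaus), not an interior real marked point; and you must also rule out the competing real tropical parametrizations in which the two conjugated ends separate at a flat trivalent vertex further out along the weight-$2$ end (the paper's Lemma 6.1, via a first-order reality argument on the moments), since otherwise the identification of tropical solutions with $\Delta(s)$-curves and the vertex-product formula would not be justified.
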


The paper is organized as follows. In the second section we recall the standard definitions related to tropical curves and the tropicalization. In the third section we recall the definition of the quantum index and its computation in few cases. The fourth section is devoted to the enumerative problems leading to the definition of $R_{\Delta,(s,0,\dots,0)}$ and $N_{\Delta(s)}^{\partial,\text{trop}}$. In the fifth section, we prove the correspondence theorem before proving Theorem \ref{theorem paper} in the last section.

\section{Tropical curves  and real tropical curves}

	\subsection{Real abstract tropical curves}

Let $\overline{\Gamma}$ be a finite connected graph without bivalent vertices. Let $\overline{\Gamma}_\infty^0$ be the set of $1$-valent vertices of $\overline{\Gamma}$, and $\Gamma=\overline{\Gamma}\backslash\overline{\Gamma}^0_\infty$. If $m$ denotes the cardinal of $\overline{\Gamma}_\infty^0$, its elements are labeled with integers from $[\![1;m]\!]$. We denote by $\Gamma^0$ the set of vertices of $\Gamma$, and by $\Gamma^1$ the set of edges of $\Gamma$. The non-compact edges resulting from the eviction of $1$-valent vertices are called \textit{unbounded ends}. The set of unbounded ends is denoted by $\Gamma^1_\infty$, while its complement, the set of bounded edges, is denoted by $\Gamma_b^1$. Notice that $\Gamma_\infty^1$ is also labeled by $[\![1;m]\!]$. Let $l:\gamma\in\Gamma_b^1\mapsto |\gamma|\in\RR_+^*=]0;+\infty[$ be a function, called length function. It endows $\Gamma$ with the structure of a metric graph by decreting that a bounded edge $\gamma$ is isometric to $[0;|\gamma|]$, and an unbounded end is isometric to $[0;+\infty[$.

\begin{defi}
Such a metric graph $\Gamma$ is called an \textit{abstract tropical curve}.
\end{defi}

An isomorphism between two abstract tropical curves $\Gamma$ and $\Gamma'$ is an isometry $\Gamma\rightarrow\Gamma'$. In particular an automorphism of $\Gamma$ does not necessarily respect the labeling of the unbounded ends since it only respects the metric. Therefore, an automorphism of $\Gamma$ induces a permutation of the set $I=[\![1;m]\!]$ of unbounded ends.

\begin{defi}
Let $\Gamma$ be an abstract tropical curve. A \textit{real structure} on $\Gamma$ is an involutive isometry $\sigma:\Gamma\rightarrow\Gamma$. A \textit{real abstract tropical curve} is an abstract tropical curve enhanced with a real structure.
\end{defi}

Since a real structure $\sigma:\Gamma\rightarrow\Gamma$ has to preserve the metric, for any bounded edge $\gamma$, one has $|\gamma|=|\sigma(\gamma)|$. The real structure also induces an involution on the set of ends $I=[\![1;m]\!]$ of $\Gamma$. The fixed ends are called \textit{real ends} and the pairs of exchanged ends are called the \textit{conjugated ends}, or \textit{complex ends}. The fixed locus of $\sigma$ is denoted by $\fix$. It is a subgraph of $\Gamma$.\\

\begin{expl}
\begin{itemize}[label=-]
\item The trivial real structure $\sigma=\text{id}_\Gamma$ is the most common example, useful despite its simplicity.
\item If $\Gamma$ is an abstract tropical curve and $e,e'\in\Gamma^1_\infty$ are two unbounded ends adjacent to the same vertex $w$, another example is given by permuting the two unbounded ends $e$ and $e'$, and leaving the rest of the graph invariant.
\end{itemize}
\end{expl}

	\subsection{Real parametrized tropical curves}

Recall that we have two dual lattices $N$ and $M$, and $N_\RR=N\otimes\RR$. We now define parametrized tropical curves in $N_\RR$.

\begin{defi}
A \textit{parametrized tropical curve} in $N_\RR\simeq\RR^2$ is a pair $(\Gamma,h)$, where $\Gamma$ is an abstract tropical curve and $h:\Gamma\rightarrow\RR^2$ is a map satisfying the following requirements:
\begin{itemize}
\item For every edge $E\in\Gamma^1$, the map $h|_E$ is affine. If we choose an orientation of $E$, the value of the differential of $h$ taken at any interior point of $E$, evaluated on a tangent vector of unit length, is called the slope of $h$ alongside $E$. This slope must lie in $N$.
\item We have the so called \textit{balancing condition}: at each vertex $V\in\Gamma^0$, if $E$ is an edge containing $V$, and $u_E$ is the slope of $h$ along $E$ when $E$ is oriented outside $V$, then 
$$\sum_{E:\partial E\ni V} u_E=0\in N.$$
\end{itemize}
\end{defi}

Two parametrized curves $h:\Gamma\rightarrow N_\RR$ and $h':\Gamma'\rightarrow N_\RR$ are isomorphic if there exists an isomorphism of abstract tropical curves $\varphi:\Gamma\rightarrow\Gamma'$ such that $h=h'\circ\varphi$.\\

\begin{defi}
A \textit{real parametrized tropical curve} is a triplet $(\Gamma,\sigma,h)$, where $(\Gamma,h)$ is a parametrized tropical curve, $\sigma$ is a real structure on $\Gamma$, and $h$ is $\sigma$-invariant: $h\circ\sigma=h$.
\end{defi}

\begin{rem}
In particular, two vertices that are exchanged by $\sigma$ have the same image under $h$, and two edges that are exchanged by $\sigma$ have the same slope and the same image. Such edges are called \textit{double edges}. If they are unbounded, we call them a \textit{double end}. Thus, the image $h(\Gamma)\subset N_\RR$ may not be sufficient to recover $\Gamma$ and the real structure, since for instance there is no way of distinguishing a double end from a simple end with twice their slope.
\end{rem}

\begin{figure}
\begin{center}

\begin{tikzpicture}[line cap=round,line join=round,>=triangle 45,x=0.5cm,y=0.5cm]
\clip(0.,0.) rectangle (12.,9.);
\draw (0.,9.)-- (2.,6.);
\draw (2.,6.)-- (0.,5.);
\draw (2.,6.)-- (4.,5.);
\draw (0.,8.8)-- (2.,5.8);
\draw (2.,5.8)-- (0.,4.8);
\draw (2.,5.8)-- (4.,4.8);

\draw (4.,5.)-- (3.,2.);
\draw (3.,2.)-- (0.,2.);
\draw (3.,2.)-- (4.,0.);
\draw (4.,5.)-- (7.,5.);

\draw (7.,5.)-- (9.,8.);
\draw (6.8,5.)-- (8.8,8.);

\draw (7.,5.)-- (9.,2.);
\draw (9.,2.)-- (8.,0.);
\draw (9.,2.)-- (12.,2.);
\end{tikzpicture}
\caption{Abstract real tropical curve with its real structure depicted by doubling the exchanged edges.}
\end{center}
\end{figure}
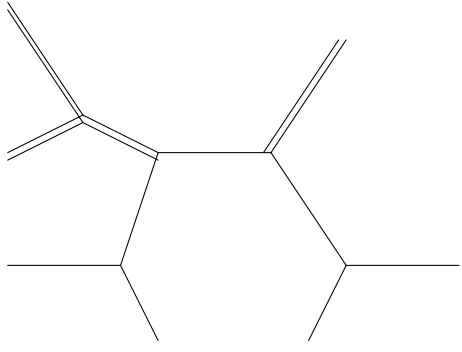

\begin{rem}
We could assume that $M=N=\ZZ^2$, but the distinction is now useful since the lattice $M$ is a set of functions on the space $N_\RR$ where the tropical curves live, while $N$ is the space of the slopes of the edges of a tropical curve. Moreover, notice that we deal with tropical curves in the affine space $N_\RR$, identified with its tangent space at $0$.
\end{rem}

If $e\in\Gamma^1_\infty$ is an unbounded end of $\Gamma$, let $n_e\in N$ be the slope of $h$ alongside $e$, oriented out of its unique adjacent vertex, \textit{i.e.} toward infinity. The multiset
$$\Delta=\{n_e\in N|e\in\Gamma^1_\infty \}\subset N,$$
is called the \textit{degree} of the parametrized curve. It is a multiset since an element may appear several times. Let $(e_1,e_2)$ be a basis of $N$. We say that a curve is of degree $d$ if $\Delta=\Delta_d=\{(-e_1)^d,(-e_2)^d,(e_1+e_2)^d\}$. Using the balancing condition, one can show that $\sum_{n_e\in\Delta}n_e=0$.\\


\begin{defi}
\begin{itemize}
\item[-] Let $\Gamma$ be an abstract tropical curve. The \textit{genus} of $\Gamma$ is its first Betti number $b_1(\Gamma)$.
\item[-] A curve is \textit{rational} if it is of genus $0$.
\item[-] A parametrized tropical curve $(\Gamma,h)$ is \textit{rational} if $\Gamma$ is rational.
\end{itemize}
\end{defi}

\begin{rem}
A parametrized tropical curve is then rational if the graph that parametrizes it is a tree.
\end{rem}

	\subsection{Plane tropical curves}
	
	We now look at plane tropical curves, which are the images of the parametrized tropical curves. To define a plane tropical curve, we consider a \textit{tropical polynomial}: for $x\in N_\RR$, put
	$$P(x)=\max_{u\in P_\Delta}(a_u+\langle u,x\rangle ),$$
	where $P_\Delta\subset M$ is a convex lattice polygon, and $a_u\in\RR\cup\{-\infty\}$ are the coefficients of the polynomial, different from $-\infty$ if $m\in M$ is a vertex of $P_\Delta$. The polygon $P_\Delta$ is called the \textit{Newton polygon} of the plane tropical curve. If we choose a basis of $M$ and $N$, the tropical polynomial $P$ takes the following form:
	$$P(x,y)=\max_{(i,j)\in P_\Delta}(a_{ij}+ix+jy).$$
	The tropical polynomial $P$ is then a piecewise affine convex function which is the maximum of a finite number of affine functions. We assume that $P_\Delta$ contains more than one point, otherwise $P$ is an affine function. The tropical polynomial $P$ induces a subdivision of $P_\Delta$ with the following rule: $u,u'\in P_\Delta$ are connected by an edge if $\{x\in N_\RR : P(x)=a_u+\langle u,x\rangle=a_{u'}+\langle u',x\rangle\}\neq\emptyset$. The \textit{corner locus} $C$ of $P$, \textit{i.e.} the set where at least two of the affine functions realize the maximum, is a rectilinear graph in $N_\RR$. Equivalently, this is the set of points where $P$ is not differentiable. The subdivision of $P_\Delta$ induced by $P$ is \textit{dual} to the corner locus $C$ in the sense that there exists natural bijections between the following pairs of sets: edges of $C$ and edges of the subdivision, vertices of $C$ and polygons of the subdivision, components where $P$ is smooth equal to one of the affine functions and vertices of the subdivision.
	
	\begin{defi}
The \textit{plane tropical curve} $C$ associated to a tropical polynomial $P$ is the corner locus of $P$, enhanced with the following weights on the edges of $C$: the weight of an edge is the lattice length of the dual edge in the subdivision of $P_\Delta$. The polygon $P_\Delta$ is called the \textit{degree} of the curve $C$.
\end{defi}

Plane tropical curves can be characterized as finite weighted graphs (weights on the edges) with unbounded ends in $N_\RR$, such that the edges are affine with slope in $N$, and the vertices satisfy the following balancing condition: if $E$ is an edge of weight $w_E$ adjacent to a vertex $V$, and $u_E$ is a primitive lattice vector directing $E$ oriented outward from $V$, we have
$$\sum_{E\ni V}w_Eu_E=0.$$
For more details on plane tropical curves, see \cite{brugalle2014bit}. \\

One can show that the image of a parametrized tropical curve is indeed a plane tropical curve with this definition. Moreover, the relation between $\Delta$, degree of the parametrized tropical curve, and $P_\Delta$, degree of the plane curve, is as described in the introduction. Conversely, a plane tropical curve can always be parametrized by an abstract tropical curve, leading to a parametrized tropical curve. Moreover, if $C$ is a plane curve parametrized by $h:\Gamma\rightarrow N_\RR$, the weight of an edge $E$ of $C$ can be recovered as the sum of the lattice lengths of the slopes of $h$ on the edges $\gamma\in\Gamma$ which project onto $E$. However, there are often many ways of choosing a parametrization of a plane tropical curve, by non-isomorphic tropical curves. In fact, even the degree $P_\Delta\subset M$ does not uniquely determine the degree $\Delta\subset N$ of a parametrized curve parametrizing $C$: \textit{e.g.} if $C$ has un unbounded end of weight $2$, a parametrizing graph $\Gamma$ could have either an end $e$ with $h|_e$ having slope of lattice length $2$, or two ends of primitive slope with the same image by $h$.\\

We now define the usual concepts associated to plane classical curves in the case of plane tropical curves, starting with reducible curves. 

\begin{defi}
\begin{itemize}[label=-]
\item A plane tropical curve is \textit{reducible} if it can be represented as the union of two distinct plane tropical curves. 
\item A plane tropical curve is \textit{irreducible} if it is not reducible.
\end{itemize}
\end{defi}

Going on with the definition of the genus, one needs to be careful since it depends on the chosen parametrization.

\begin{defi}
\begin{itemize}[label=-]
\item The \textit{genus} of a plane tropical curve is the smallest genus among its possible parametrizations.
\item A plane tropical curve is \textit{rational} if it is irreducible and can be parametrized by a rational tropical curve.
\end{itemize}
\end{defi}

One can show that if $C$ is an irreducible rational plane tropical curve with unbounded ends of weight $1$, it admits a unique rational parametrization. More generally, we have the following statement.

\begin{prop}\cite{mikhalkin2005enumerative}\label{unique parametrization}
Let $C$ be a rational plane tropical curve, and let $u_e$ be a directing primitive lattice vector for each unbounded end $e$, oriented toward infinity. Let $w_e$ be the weight of $e$. Then $C$ is the image of a unique rational parametrized tropical curve of degree $\Delta=\{w_e u_e\}_e$.
\end{prop}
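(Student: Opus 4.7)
The plan is to prove both existence and uniqueness of the parametrization by induction on the cardinality $m = |\Delta|$, exploiting the fact that a rational parametrizing graph is a tree.

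The key initial observation is that the prescribed degree $\Delta = \{w_e u_e\}_e$ forces a bijection between the unbounded ends of any rational parametrization $h:\Gamma\to N_\RR$ and those of $C$. Indeed, for an unbounded end $e$ of $C$ with weight $w_e$ and primitive direction $u_e$, the degree specifies a unique unbounded end $e'$ of $\Gamma$ with slope $w_e u_e$, whose lattice length is exactly $w_e$. By the formula relating weights of edges of $C$ to sums of lattice lengths of slopes on edges of $\Gamma$ projecting onto them, the single end $e'$ already saturates the total weight contribution to $e$; hence $e'$ is the only edge of $\Gamma$ projecting onto $e$, and the vertex $V'$ at its base must satisfy $h(V') = V$, where $V$ is the adjacent vertex of $e$ in $C$. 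Moreover, since $\Gamma$ is a tree, removing any bounded edge $\gamma$ disconnects $\Gamma$ into two subtrees, and the balancing condition forces the slope of $\gamma$ to equal the signed sum of the slopes of unbounded ends of one subtree. Thus once the combinatorial tree structure of $\Gamma$ is fixed, the map $h$ is completely determined; this reduces the problem to showing uniqueness of the combinatorial structure.

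For the base case $m = 3$, balancing forces $C$ to be a tripod with one trivalent vertex, and the only tree with three labeled leaves and no bivalent vertex is the star; existence and uniqueness are immediate. For the inductive step, I would pick an unbounded end $e$ of $C$ and ``prune'' it: remove $e'$ from $\Gamma$ and modify it locally at $V'$ (adding a new virtual end at $V'$ with slope $-w_e u_e$, or merging parallel edges as appropriate) to obtain a smaller rational tropical curve $\tilde{\Gamma}$ parametrizing a smaller plane tropical curve $\tilde{C}$ of an adjusted degree $\tilde{\Delta}$. Applying the induction hypothesis to $\tilde{C}$ then yields uniqueness of $\tilde{\Gamma}$, which lifts uniquely to $\Gamma$ by re-attaching $e'$ at $V'$. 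Existence is proved by running the same construction in reverse, starting from the tripod-like parametrizations of the smallest curves.

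The main obstacle lies in handling the various local configurations at $V'$ uniformly: $V'$ may be trivalent (so pruning $e'$ leaves a bivalent vertex whose two adjacent edges must either be smoothed out, when they have the same slope, or replaced by a new end, when the slopes differ), or it may be higher-valent, and its remaining edges may have slopes parallel or anti-parallel to $u_e$ that combine or cancel. The technical heart of the proof is to verify in each case that the reduced $\tilde{C}$ is a valid plane tropical curve of lower complexity with matching degree $\tilde{\Delta}$, and that every rational parametrization of $C$ of degree $\Delta$ arises uniquely in this way from a parametrization of $\tilde{C}$ of degree $\tilde{\Delta}$; the tree property of $\Gamma$ is essential here, as it rules out any ambiguous re-attachment that would otherwise create a cycle.
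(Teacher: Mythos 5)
The paper itself offers no proof of this proposition---it is imported verbatim from Mikhalkin's work---so your argument has to stand on its own. Your opening reduction is sound and is indeed the right first move: because the degree $\Delta=\{w_eu_e\}_e$ contains exactly one vector per unbounded end of $C$, each of lattice length equal to the full weight of that end, the unbounded ends of $\Gamma$ and of $C$ must be in bijection, the vertex at the base of $e'$ must map to the finite vertex of $e$, and once the tree topology of $\Gamma$ is fixed the slope of every bounded edge is forced by the split of the leaf set it induces. So the statement does reduce to existence and uniqueness of the combinatorial tree.

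The genuine gap is the inductive step, which is where all the content lives. The local surgery you propose does not produce a tropical curve: after deleting the ray $e$ from $C$, rebalancing the vertex $V$ requires adding back an outgoing weighted direction $+w_eu_e$, i.e.\ exactly the ray you just removed (adding one of slope $-w_eu_e$, as written, leaves $V$ unbalanced by $2w_eu_e$), and if $V$ was trivalent the two remaining edges have non-opposite slopes, so they can neither be smoothed nor replaced by a single end. There is no local, complexity-decreasing ``forget an end'' operation on plane tropical curves with nonzero end slopes, and the reduced object $\tilde C$ need not stay irreducible or rational, so the induction hypothesis need not apply to it. Moreover, the cases you defer to ``the technical heart'' are exactly the ones where uniqueness could conceivably fail and which any proof must confront: a bounded edge of $C$ covered by several edges of $\Gamma$ whose lattice lengths sum to its weight, a vertex of $C$ whose preimage in $\Gamma$ is a subtree containing contracted (slope-zero) edges, or a vertex of $\Gamma$ landing in the interior of an edge of $C$. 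None of these is excluded by the end-bijection argument, since the degree constrains only the unbounded ends; ruling them out requires using irreducibility and rationality of $C$ (for instance, a contracted or duplicated edge lets one cut $\Gamma$ into two subtrees whose images are both balanced, contradicting irreducibility once the end weights are accounted for). The existence half, dispatched in one sentence as ``running the construction in reverse,'' also needs an actual argument, e.g.\ starting from an arbitrary rational parametrization of $C$ and showing it can be modified to one of degree $\{w_eu_e\}_e$.
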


\subsection{Real parametrizations of a plane tropical curve}
 
In this subsection, we extend Proposition \ref{unique parametrization} by describing the possible real rational parametrizations of an irreducible rational plane tropical curve, with unbounded ends of weights $1$ or $2$.\\

Let $C$ be a rational plane tropical curve with unbounded ends of weight $1$ or $2$. Let $u_1$, $\dots$, $u_r$, $2v_1$, $\dots$, $2v_s$ be the weighted directing vectors of the unbounded ends of $C$, with vectors $u_i,v_j$ being primitive vectors in $N$. We assume that $r\geqslant 1$. Let $h:\Gamma\rightarrow N_\RR$ be the unique rational parametrization of $C$ given by Proposition \ref{unique parametrization}, which is of degree $\{u_i,2v_j\}_{i,j}$. We now describe the parametrizations of $C$ by real parametrized rational curves of degree $\{u_i,v_j^2\}_{i,j}$, which means that now all vectors are primitive, and each unbounded end of weight $2$ is replaced with two ends of weight $1$.\\



We define the subgraph $\Gamma_\text{even}$ of $\Gamma$ as the minimal subgraph satisfying both following requirements:
	\begin{itemize}[label=-]
	\item Every unbounded end of $\Gamma$ of even weight (\textit{i.e.} mapped to an end of $C$ directed by $2v_j$ for some $j$) is in $\Gamma_\text{even}$.
	\item If $V$ is a vertex of $\Gamma$ and all edges adjacent to $V$ but one are in $\Gamma_\text{even}$, then the remaining adjacent edge also is in $\Gamma_\text{even}$. Following \cite{shustin2004tropical}, such a vertex is called an \textit{extendable vertex}.
	\end{itemize}

\begin{rem}
The subgraph $\Gamma_\text{even}$ is the maximal graph on which we can "cut $\Gamma$ in two" in order to obtain a new graph $\Gamma'$, used to parametrize $C$. Notice that on all the edges of $\Gamma_\text{even}$, the map
$h$ has an even slope.
\end{rem}

As $C$ admits at least one odd unbounded end, each connected component $(\Gamma_\text{even})_i$ of $\Gamma_\text{even}$ contains a unique \textit{stem}, which is a non-extendable vertex. We orient the edges of $(\Gamma_\text{even})_i$ away from the stem. Then we say that a subset of points $\mathcal{R}_i\subset(\Gamma_\text{even})_i$ is \textit{admissible} if no point of $\mathcal{R}_i$ is joint to another by an oriented path, and for each unbounded end $e$ in $(\Gamma_\text{even})_i$, there is at least (and thus exactly one) point of $\mathcal{R}_i$ on the shortest path between the stem and $e$. Let $\mathcal{R}=\bigcup_i\mathcal{R}_i$. We then define a real abstract tropical curve $(\Gamma(\mathcal{R}),\sigma)$ with a map $h_\mathcal{R}:\Gamma(\mathcal{R})\rightarrow N_\RR$ that factors through $\Gamma(\mathcal{R})\rightarrow \Gamma\rightarrow N_\RR$ and makes it a real parametrized tropical curve.\\

Let $\Gamma_\text{fix}(\mathcal{R})$ be the closure of the union of the connected components of $\Gamma-\mathcal{R}$ not containing any even end. The abstract tropical curve $\Gamma(\mathcal{R})$ is obtained as the disjoint union of two copies of $\Gamma$, glued along $\Gamma_\text{fix}(\mathcal{R})$:
$$\begin{tikzcd}
\Gamma_\text{fix}(\mathcal{R}) \arrow{r} \arrow{d} & \Gamma\arrow{d} \\
\Gamma \arrow{r} & \Gamma(\mathcal{R}) \\
\end{tikzcd}.$$

 In other terms, $\Gamma(\mathcal{R})=\Gamma\coprod_{\Gamma_\text{fix}(\mathcal{R})}\Gamma$. It means that we have doubled the components of $\Gamma-\mathcal{R}$ containing the even ends. We denote by $\pi:\Gamma(\mathcal{R})\rightarrow\Gamma$ the map obtained by gluing the identity maps of $\Gamma$. The complement of $\Gamma_\text{fix}(\mathcal{R})$ in $\Gamma$ is called the \textit{splitting graph}. It is a subset of $\Gamma_\text{even}$. The splitting graph is maximal if its closure is equal to $\Gamma_\text{even}$. The length function on $\Gamma(\mathcal{R})$ is defined as follows: we consider points of $\mathcal{R}$ as vertices of $\Gamma$, then, the length of an edge $\gamma$ of $\Gamma(\mathcal{R})$ is the length of its image $\pi(\gamma)$ if it is an edge of $\Gamma_\text{fix}(\mathcal{R})$ and twice the length of $\pi(\gamma)$ otherwise. The involution $\sigma$ is the automorphism of $\Gamma(\mathcal{R})$ that exchanges the two antecedents whenever there are two. The parametrized map $h_\mathcal{R}:\Gamma(\mathcal{R})\rightarrow N_\RR$ is the composition of $\pi$ and $h$.
 
\begin{rem}
The map $\pi$ really looks like a tropical cover, as defined in \cite{cavalieri2010tropical} and \cite{buchholz2015tropical}. However, it is not always the case. This is normal since the purpose of the notion of tropical cover is to mimick ramified covers between complex curves. The map $\pi$ here plays the role of the quotient map by a real involution, which is not a ramified cover.
\end{rem}

Let $\gamma$ be an edge of $\Gamma(\mathcal{R})$, and $n\in N$ be the slope of $\pi(\gamma)$. Then, one can easily check that the choice of length on $\Gamma(\mathcal{R})$ ensures that $h_\mathcal{R}$ has slope $n$ if $\gamma\in\fix$ and $\frac{n}{2}$ otherwise. However, as the edges of $\Gamma_\text{even}$ have an even slope, it is still an element of $N$. One can check that the balancing condition is still satisfied. Therefore, $(\Gamma(\mathcal{R}),h_\mathcal{R},\sigma)$ is a real parametrized tropical curve, of image $C$, and of degree $\{u_i,v_j^2\}_{i,j}$.

\begin{prop}\label{realstruc}
Let $C$ be an irreducible rational plane tropical curve of degree $P_\Delta\subset M$ having unbounded ends of weight $1$ or $2$. Let $\Delta\subset N$ be the degree associated to $P_\Delta$ consisting only of primitive lattice vectors. let $h:\Gamma\rightarrow N_\RR$ be the unique rational parametrization of $C$ given by Proposition \ref{unique parametrization}. Using previous notations, every real rational parametrized curve of degree $\Delta$ having the image $C$ is one of the curves $\Gamma(\mathcal{R})$.
\end{prop}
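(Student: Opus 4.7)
Given an arbitrary real parametrized rational tropical curve $(\Gamma',\sigma,h')$ of degree $\Delta$ with image $C$, the plan is to first recover the non-real parametrization $h:\Gamma\to N_\RR$ by taking the metric quotient of $\Gamma'$ by $\sigma$, then read off the admissible set $\mathcal{R}$ from the boundary of the fixed locus $\text{Fix}(\sigma)$, and finally match $(\Gamma',\sigma,h')$ with $(\Gamma(\mathcal{R}),\sigma,h_\mathcal{R})$.

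First, I form the metric quotient $\Gamma''=\Gamma'/\sigma$ by keeping the lengths of fixed edges and halving the common length of each conjugate pair of edges. Because $h'$ is $\sigma$-invariant, it descends to a continuous map $\bar{h}:\Gamma''\to N_\RR$. A direct check shows that on a fixed edge the slope of $\bar{h}$ equals that of $h'$, whereas on the class of a conjugate pair of edges of common slope $n$ the slope doubles to $2n$ because the length has been halved; the balancing condition is preserved at every vertex, since a conjugate pair of outgoing edges of slope $n$ at a fixed vertex aggregates, in the quotient, into a single outgoing edge of slope $2n$. Since $\Gamma'$ is a tree, so is $\Gamma''$, and $\bar{h}$ has image $C$ with degree $\{u_i,2v_j\}$: fixed unbounded ends contribute $u_i$, while each conjugate pair of ends of primitive slope $v_j$ collapses to a single end of slope $2v_j$. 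Proposition \ref{unique parametrization} then forces $(\Gamma'',\bar{h})\simeq(\Gamma,h)$, so the quotient supplies a canonical map $\pi':\Gamma'\to\Gamma$ with $h'=h\circ\pi'$.

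Next, I set $\mathcal{R}=\pi'(\partial\,\text{Fix}(\sigma))$, where $\partial\,\text{Fix}(\sigma)$ denotes the set of fixed points of $\sigma$ adjacent to some edge on which $\sigma$ acts non-trivially. The non-fixed portion of $\Gamma'$ maps under $\pi'$ onto the union of those components of $\Gamma-\mathcal{R}$ that contain the even unbounded ends; indeed, the non-fixed edges of $\Gamma'$ are precisely the ones whose image carries a slope equal to twice a primitive lattice vector, which by the very definition of $\Gamma_\text{even}$ and of extendable vertices must lie in the components of $\Gamma-\mathcal{R}$ reached from the even ends. The admissibility of $\mathcal{R}$ then follows readily. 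No oriented path away from the stem of a component of $\Gamma_\text{even}$ can join two points of $\mathcal{R}_i$, for such a path would have to re-enter $\text{Fix}(\sigma)$ beyond the first point, contradicting the fact that $\pi'$ is an unramified double cover branched only at $\partial\,\text{Fix}(\sigma)$. Moreover, each even unbounded end of $C$ is the image of a conjugate pair of ends of $\Gamma'$, so the path from the corresponding stem to this end must traverse $\partial\,\text{Fix}(\sigma)$, producing at least one point of $\mathcal{R}_i$.

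It remains to identify $(\Gamma',\sigma,h')$ with $(\Gamma(\mathcal{R}),\sigma,h_\mathcal{R})$. Both are involutive double covers of $\Gamma$ that are trivial precisely over $\Gamma_\text{fix}(\mathcal{R})$ and non-trivial over the connected components of its complement; on a tree such a cover is determined up to a unique equivariant isomorphism, and this isomorphism intertwines $h'$ and $h_\mathcal{R}$ since both factor through $h$. The step I expect to be most delicate is the first one, namely confirming that the quotient metric makes $\bar{h}$ a genuine tropical parametrization with the claimed degree $\{u_i,2v_j\}$, so that Proposition \ref{unique parametrization} genuinely applies; once this identification of the quotient with $\Gamma$ is in place, the rest amounts to unwinding the combinatorics of the double cover over the tree $\Gamma$.
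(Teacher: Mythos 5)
Your overall strategy is exactly the paper's: pass to the metric quotient $\Gamma'/\sigma$, use Proposition \ref{unique parametrization} to identify it with $\Gamma$, take $\mathcal{R}$ to be the (image of the) boundary of $\fix$, check admissibility, and reassemble. The first step is carried out correctly. However, two of your justifications do not hold up. First, your claim that ``the non-fixed edges of $\Gamma'$ are precisely the ones whose image carries a slope equal to twice a primitive lattice vector'' is false in both directions: the image of a non-fixed edge has slope twice the slope of that edge, which need not be primitive, and a \emph{fixed} edge may perfectly well have even slope (the inclusion of $\Gamma_\text{even}$ into the even-slope locus is one-way). The correct argument is about ends, not slopes: a connected component of $\Gamma'\setminus\fix$ contains only non-fixed unbounded ends, i.e.\ conjugate pairs, which are exactly the preimages of the even ends of $C$; since $\Gamma$ has no $1$-valent vertices, every such component does contain an end, so it contains only even ends and (by propagating extendability from its leaves) lies in $\Gamma_\text{even}$.

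Second, and more seriously, your admissibility argument rests on a fact you never establish and on a non-argument. The phrase ``such a path would have to re-enter $\fix$ beyond the first point, contradicting the fact that $\pi'$ is an unramified double cover branched only at $\partial\,\fix$'' does not work: if $p\prec q$ are both in $\mathcal{R}\subset\fix$, the path from $p$ to $q$ never \emph{leaves} $\fix$ in the first place, so there is nothing to ``re-enter,'' and the covering structure of $\pi'$ by itself forbids nothing here. The key structural input, which the paper proves and you omit, is that $\fix$ is \emph{connected}: on a tree the unique shortest path between two fixed points is $\sigma$-invariant and hence pointwise fixed. This is what guarantees that each component of $\Gamma'\setminus\fix$ is attached to $\fix$ at a single point (so that $\Gamma'$ really is two copies of $\Gamma$ glued along $\Gamma_\text{fix}(\mathcal{R})$, as your last paragraph assumes) and that a stem-to-end path meets $\partial\fix$ exactly once after its last point in $\fix$. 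You should state and prove this connectedness and then rerun the admissibility check from it; as written, that step of your argument is circular where it is not simply unsupported.
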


\begin{proof}
The curves $(\Gamma(\mathcal{R}),h)$ provide real rational parametrizations of $C$. Conversely, if $h:(\Gamma',\sigma)\rightarrow N_\RR$ is a real rational parametrization of $C$ of degree $\{u_i,v_j^2\}$, then we have quotient curve $\Gamma'/\sigma$ defined as follows. As a topological space, $\Gamma/\sigma$ is the quotient by $\sigma$. The edge lengths are the same for edges in $\fix$, and the edge length is divided by two for a pair of exchanged edges. Since $h$ is $\sigma$-invariant, we have a quotient map $\tilde{h}:\Gamma'/\sigma\rightarrow N_\RR$ and one can check that the above choice of edge length makes it into a parametrized tropical curve. The assumption on the weights of the unbounded ends of $C$ ensures that the conjugated ends of $\Gamma'$ are mapped to the even unbounded ends of $C$. Their weight is doubled when passing to the quotient. Thus, we get a rational parametrization of $C$ of degree $\{u_i,2v_j\}_{i,j}$. Therefore, it is isomorphic to $\Gamma$. Let $\pi:\Gamma\rightarrow\Gamma/\sigma$ be the quotient map.\\

The primitivity assumption on the degree ensures that near infinity, the points of the even unbounded ends of $\Gamma$ have two antecedents by $\pi$. The other ends only have one. Let $\mathcal{R}$ be the boundary of $\fix$. 
\begin{itemize}
\item First of all, $\fix$ is connected: if $p,q\in\fix$, there is a unique shortest path in $\Gamma$ between $p$ and $q$, this path is then $\sigma$-invariant, thus in $\fix$.
\item Let $\Xi$ be a connected component of $\Gamma\backslash\fix$, the boundary of $\Xi$ contains exactly one point of $\mathcal{R}$: at least one since $\Xi\neq\Gamma$, and at most one, otherwise the path between these points of $\mathcal{R}$ would lie in $\Xi$, and we have proven that such a path lies in $\fix$. Thus, $\Xi$ only contains even ends, and the construction of $\Gamma_\text{even}$ ensures that the point of $\mathcal{R}$ on the boundary of $\Xi$ is in $\Gamma_\text{even}$.
\item Finally, we have proven that $\Gamma$ is composed of $\fix$, which is connected and has boundary $\mathcal{R}$, and components $\Xi$ that are attached to $\fix$ at those vertices. Thus, the configuration $\mathcal{R}$ is admissible: there is at least one point of $\mathcal{R}$ on the shortest path between the stem and an even end since the stem is in $\fix$ and the end is not, and there is at most one since $\fix$ is connected.
\end{itemize}
Finally, the set $\mathcal{R}$ being admissible, the graph $\Gamma'$ is recovered as the curve $\Gamma(\mathcal{R})$.
\end{proof}

	\subsection{Moment of an edge}
	
	Recall that $\omega$ is a generator of $\bigwedge^2 M$, \textit{i.e.} a non-degenerated $2$-form on $N$. It extends to a volume form on $N_\RR\simeq\RR^2$. Let $e\in\Gamma^1_\infty$ be an unbounded end oriented toward infinity, directed by $n_e$. Then the moment of $e$ is the scalar
	$$\mu_e=\omega(n_e,p)\in\RR,$$
	where $p\in e$ is any point on the edge $e$. Remember that we identify the affine space $N_\RR$ with its tangent space at $0$, allowing us to plug in $\omega$ a tangent vector $n_e$ and a point $p$. We similarly define the moment of a bounded edge if we specify its orientation. The moment of a bounded edge is reversed when its orientation is reversed.\\
	
	Intuitively, the moment of an unbounded end is just a way of measuring its position alongside a transversal axis. Thus, fixing the moment of an unbounded end amounts to impose on the curve that it goes through some point at infinity. In a way, this allows us to do toric geometry in a compactification of $N_\RR$ but staying in $N_\RR$. It provides a coordinate on the components of the toric boundary without even having to introduce the concept of toric boundary in the tropical world. Following this observation, the moment has also a definition in complex toric geometry, where it corresponds to the coordinate of the intersection point of the curve with the toric divisor. Let

$$\begin{array}{rccl}
\varphi: & \CC P^1 & \dashrightarrow & \text{Hom}(M,\CC^*)\simeq(\CC^*)^2 \\
 & t & \mapsto & \chi\prod_{1}^r(t-\alpha_j)^{n_j}. \\
 \end{array}$$
	 be a parametrized rational curve. Given a dual basis $(e_1,e_2)$ of $N$, and its dual basis $(e_1^*,e_2^*)$ of $M$, the parametrized curve given in coordinates is as follows. Let $n_i=a_ie_1+b_ie_2$, $a=\chi(e_1^*)$ and $b=\chi(e_2^*)$, then
	 $$\varphi(t)=\left( a\prod_1^r(t-\alpha_i)^{a_i},b\prod_1^r(t-\alpha_i)^{b_i}\right)\in (\CC^*)^2.$$
	 This is a curve of degree $\Delta=(n_j)\subset N$. The degree $\Delta$ defines a fan $\Sigma_\Delta$ and a toric surface $\CC\Delta$ to which the map $\varphi$ naturally extends. The toric divisors $D_k$ of $\CC\Delta$ are in bijection with the rays of the fan, which are directed by the vectors $n_j$. Several vectors $n_j$ may direct the same ray. Moreover, the map $\varphi$ extends to the points $\alpha_j$ by sending $\alpha_j$ to a point on the toric divisor $D_k$ corresponding to the ray directed by $n_j$. A coordinate on $D$ is a primitive monomial $\chi^m\in M$ in the lattice of characters such that $\langle m,n_j\rangle=0$. This latter equality ensures that the monomial $\chi^m$ extends on the divisor $D_k$. If $n_j$ is primitive, $\iota_{n_j}\omega\in M$ is such a monomial, and then the complex moment is the evaluation of the monomial at the corresponding point on the divisor:
	$$\mu_j=\left(\varphi^*\chi^{\iota_{n_j}\omega}\right)(\alpha_j).$$
	
	The Weil reciprocity law gives us the following relation between the moments:
$$\prod_{i=1}^m \mu_i=(-1)^m.$$	
	We could also prove the relation using Vi\`ete formula. In the tropical world we have an analog called the \textit{tropical Menelaus theorem}, which gives a relation between the moments of the unbounded ends of a parametrized tropical curve.
	
	\begin{prop}[Tropical Menelaus Theorem \cite{mikhalkin2017quantum}] For a parametrized tropical curve of degree $\Delta$, we have
	$$\sum_{n_e\in\Delta} \mu_e =0.$$
	\end{prop}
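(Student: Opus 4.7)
The plan is to reduce the identity $\sum_{n_e \in \Delta}\mu_e = 0$ to a local statement at each vertex, obtained as a direct consequence of the balancing condition, and then to sum these local identities and see the contributions of bounded edges cancel pairwise.

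First I would extend the definition of the moment to bounded edges. Given a bounded edge $E$ with a chosen orientation and slope $u_E \in N$, I set $\mu_E = \omega(u_E, p)$ for any point $p \in E$. This is well-defined because changing $p$ to $p' \in E$ adds $\omega(u_E, p'-p) = 0$ since $p'-p$ is proportional to $u_E$. Reversing the orientation of $E$ negates both $u_E$ and the sign of the form's first slot, so the moment only changes sign: $\mu_{E^{\mathrm{op}}} = -\mu_E$. The key observation is that if $V$ is a vertex of $E$, I may take $p = h(V)$, so the moment of $E$ oriented outward from $V$ equals $\omega(u_E, h(V))$.

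Next I would prove the local statement: at each vertex $V \in \Gamma^0$, orienting every adjacent edge outward from $V$ gives
\[
\sum_{E \ni V} \mu_E \;=\; \sum_{E \ni V} \omega\bigl(u_E,\, h(V)\bigr) \;=\; \omega\!\left(\sum_{E \ni V} u_E,\, h(V)\right) \;=\; 0,
\]
by bilinearity of $\omega$ and the balancing condition $\sum_{E \ni V} u_E = 0$.

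Finally I sum this identity over all vertices $V \in \Gamma^0$. Each bounded edge $E$ with endpoints $V_1, V_2$ appears twice: once oriented outward from $V_1$, once from $V_2$. These two occurrences carry opposite orientations and contribute $\mu_E + (-\mu_E) = 0$, so all bounded edges cancel out. Each unbounded end $e$ is adjacent to a unique vertex, and being oriented outward from that vertex is exactly being oriented toward infinity, so it contributes exactly the moment $\mu_e$ from the statement. Therefore
\[
0 \;=\; \sum_{V \in \Gamma^0} \sum_{E \ni V} \mu_E \;=\; \sum_{e \in \Gamma^1_\infty} \mu_e \;=\; \sum_{n_e \in \Delta} \mu_e,
\]
which is the desired equality. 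There is no real obstacle here; the only subtle point is recognizing that the expression $\omega(u_E, p)$ is independent of $p \in E$ so that a vertex can be plugged in at will, turning the balancing condition directly into the vanishing of the vertex-local sum.
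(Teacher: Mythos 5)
Your proof is correct. The paper itself does not include a proof of this proposition (it is quoted from Mikhalkin's paper), but your argument --- extending the moment to oriented bounded edges, observing that $\omega(u_E,\cdot)$ is constant along the edge so the vertex can be plugged in, using bilinearity together with the balancing condition to get the vanishing of the local sum at each vertex, and then summing over vertices so that bounded edges cancel in pairs --- is precisely the standard proof of the tropical Menelaus theorem, and every step checks out.
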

	
	In the tropical case as well as in the complex case, a configuration of $m$ points on the toric divisors is said to satisfy the \textit{Menelaus condition} if this relation is satisfied. Be careful that in tropical case, the moment of a point depends on the vector of $M$ used to compute it: it is of lattice length $1$ if the point is real, and of lattice length $2$ for a non-real point.
	
	\subsection{Moduli space of tropical curves and refined multiplicity of a simple tropical curve}
	
	Let $(\Gamma,h)$ be a parametrized tropical curve such that $\Gamma$ is trivalent, and has no \textit{flat vertex}. A flat vertex is a vertex whose outgoing edges have their slope contained in a common line. It just means that for any two outgoing edges of respective slopes $u,v$, we have $\omega(u,v)=0$. In particular, when the curve is trivalent, no edge can have a zero slope since it would imply that its extremities are flat vertices. A plane tropical curve is a \textit{simple nodal curve} if the dual subdivision of its Newton polygon consists only of triangles and parallelograms. The unique rational parametrization (given by Proposition \ref{unique parametrization}) of a plane rational nodal curve has a trivalent underlying graph, and has no flat vertex.\\
	
	\begin{defi}
	The \textit{combinatorial type} of a tropical curve is the homeomorphism type of its underlying labeled graph $\Gamma$, \textit{i.e.} the labeled graph $\Gamma$ without the metric.
	\end{defi}
	
	To give a graph a tropical structure, one just needs to specify the lengths of the bounded edges. If the curve is trivalent and has $m$ unbounded ends, there are $m-3$ bounded edges, otherwise the number of bounded edges is $m-3-\text{ov}(\Gamma)$, where $\text{ov}(\Gamma)$ is the \textit{overvalence} of the graph. The overvalence is given by $\sum_V (\text{val}(V)-3)$, where $V$ runs over the vertices of $\Gamma$, and $\text{val}(V)$ denotes the valence of the vertex. Therefore, the set of curves having the same combinatorial type is homeomorphic to $\RR_{\geqslant 0}^{m-3-\text{ov}(\Gamma)}$, and the coordinates are the lengths of the bounded edges. If $\Gamma$ is an abstract tropical curve, we denote by $\text{Comb}(\Gamma)$ the set of curves having the same combinatorial type as $\Gamma$.\\
	
	For a given combinatorial type $\text{Comb}(\Gamma)$, the boundary of $\RR_{\geqslant 0}^{m-3-\text{ov}(\Gamma)}$ corresponds to curves for which the length of an edge is zero, and therefore corresponds to a graph having a different combinatorial type. This graph is obtained by deleting the edge with zero length and merging its extremities. We can thus glue together all the cones of the finitely many combinatorial types and obtain the \textit{moduli space $\mathcal{M}_{0,m}$ of rational tropical curves with $m$ marked points}. It is a simplicial fan of pure dimension $m-3$, and the top-dimensional cones correspond to trivalent curves. The combinatorial types of codimension $1$ are called \textit{walls}.\\
	
	Given an abstract tropical curve $\Gamma$, if we specify the slope of every unbounded end, and the position of a vertex, we can define uniquely a parametrized tropical curve $h:\Gamma\rightarrow N_\RR$. Therefore, if $\Delta\subset N$ denotes the set of slopes of the unbounded ends, the \textit{moduli space $\mathcal{M}_0(\Delta,N_\RR)$ of parametrized rational tropical curves of degree 
$\Delta$} is isomorphic to $\mathcal{M}_{0,m}\times N_\RR$ as a fan, where the $N_\RR$ factor corresponds to the position of the finite vertex adjacent to the first unbounded end.\\
	
	On this moduli space, we have a well-defined evaluation map that associates to each parametrized curve the family of moments of its unbounded ends :
	$$\begin{array}{crcl}
	\text{ev} : & \mathcal{M}_0(\Delta,N_\RR) & \longrightarrow & \RR^{m-1} \\
	 & (\Gamma,h) & \longmapsto & \mu=(\mu_i)_{2\leqslant i\leqslant m}
	\end{array}.$$
	By the tropical Menelaus theorem, the moment $\mu_1$ is equal to the opposite of the sum of the other moments, hence we do not take it into account in the map. Notice that the evaluation map is linear on every cone of $\mathcal{M}_0(\Delta,N_\RR)$. Furthermore, 
both spaces have the same dimension $m-1$. Thus, if $\Gamma$ is a trivalent curve, the restriction of $\text{ev}$ on
$\text{Comb}(\Gamma)\times N_\RR$ has a determinant well-defined up to sign when $\RR^{m-1}$ and $\text{Comb}(\Gamma)\simeq\RR_{\leqslant0}^{m-3}$ are both endowed with their canonical basis, and $N_\RR$ is endowed with a basis of $N$. 
The absolute value $m_\Gamma^\CC$ of the determinant is called the \textit{complex multiplicity} of the curve, well-known to factor into the following product over the vertices of $\Gamma$:
	$$m_\Gamma^\CC=\prod_V m_V^\CC,$$
	where $m_V^\CC=|\omega(u,v)|$ if $u$ and $v$ are the slopes of two outgoing edges of $V$. The balancing condition ensures that $m_V^\CC$ does not depend on the chosen edges. This multiplicity is the one that appears in the correspondence theorem of Mikhalkin \cite{mikhalkin2005enumerative}. Notice that the simple parametrized tropical curves are precisely the points of the cones with trivalent graph and non-zero multiplicity. We finally recall the definition of the refined Block-G\"ottsche multiplicity.
	
	\begin{defi}
	The \textit{refined multiplicity} of a simple nodal tropical curve is
	$$m^q_\Gamma=\prod_V [m_V^\CC]_q,$$
	where $[a]_q=\frac{q^{a/2}-q^{-a/2}}{q^{1/2}-q^{-1/2}}$ is the $q$-analog of $a$.
	\end{defi}
	
	This refined multiplicity is sometimes called the Block-G\"ottsche multiplicity and intervenes in the definition of the invariant $N_\Delta^{\partial,\text{trop}}$. Notice that the multiplicity is the same for every curve inside a given combinatorial type.

	\subsection{Tropicalization}
	\label{tropicalization}

We briefly recall how to obtain an abstract tropical curve and a parametrized tropical curve from a non-archimedean parametrized curve given by a rational map $f:(C,\textbf{q})\rightarrow\text{Hom}(M,\CC((t))^*)$, where $(C,\textbf{q})$ is a curve with marked points. For more details, see \cite{tyomkin2012tropical}.\\

\subsubsection{Tropicalization of a marked curve} 
 
Let $(C,\textbf{q})$ be a smooth marked curve over $\CC((t))$. Let $\mathcal{C}^{(t)}\rightarrow\text{Spec}\CC[[t]]$ be the stable model of $(C,\textbf{q})$, defined over $\CC[[t]]$. The marked points $q_i$ provide sections $\text{Spec}\CC[[t]]\rightarrow\mathcal{C}^{(t)}$. We have the special fiber $\mathcal{C}^{(0)}$, which is a stable nodal curve, meaning that each irreducible component of genus zero has at least three marked points or nodes, and each irreducible component of genus 1 has at least one marked point or a node. Let $\overline{\Gamma}$ be the dual graph in the following sense: we have one finite vertex per irreducible component of the special fiber, one infinite vertex per marked point, an infinite vertex is joined to the finite vertex of the component where the point specializes, and two finite vertices are joined by an edge if they share a node. We make $\overline{\Gamma}$ into an abstract tropical curve by declaring the length of such an edge to be $l$ if the node is locally given by $xy=t^l$ in an etale neighborhood of the node.

\begin{rem}
Intuitively, if our curve is rational, $\mathcal{C}^{(t)}$ is just $\CC P^1$ with points depending on a small complex parameter $t$ on it, \textit{i.e.} points given by locally convergent Laurent series in $\CC((t))$. If we take naively the special fiber $t=0$, some marked points may collide, \textit{i.e.} specialize on the same point, other may go to infinity, ... Taking the stable model means that we prevent that. For instance, assume a bunch of points specialize to $0$. This means that they are of given by formal series of the form $t^k x(t)$ with $k>0$. We then blow-up this point and get two copies of $\CC P^1$ sharing a node. All the points previously specializing to $0$ now specialize to at least two different points on the exceptional divisor. The length of the edge between the two copies is the smallest $k$ for all the points specializing on it. Concretely, the blow-up amounts to change the coordinate $z$ on $\CC P^1$ by $t^{-k}z$. We then repeat as long as necessary. 
\end{rem}

If the curve $(C,\textbf{q})$ is a real curve, with a real configuration of points $\textbf{q}$, the involution restricted to the special fiber induces a real structure on $\Gamma$.


\subsubsection{Tropicalization of a parametrized curve} Now assume given a rational map $f:(C,\textbf{q})\dashrightarrow\text{Hom}(M,\CC((t))^*)$. There is a tropical curve $\Gamma$ associated to $(C,\textbf{q})$. The rational map $f$ extends to a rational map on the stable model $\mathcal{C}^{(t)}$ of $(C,\textbf{q})$. In order to make $\Gamma$ into a parametrized tropical curve, we define a map $h:\Gamma\rightarrow N_\RR$ in the following way:
\begin{itemize}
\item If $w\in\Gamma^0$ is a vertex dual to a component $C_w$ of $\mathcal{C}^{(0)}$, then $h(w)$ is the element of $N$ defined as follows:
$$h(w)(m)=\text{ord}_{C_w}(f^*\chi^m),$$
where $\text{ord}_{C_w}$ stands for the multiplicity of $C_w$ in the divisor of $f^*\chi^m$.
\item Then $h$ maps a bounded edge to the line segment linking its extremities.
\item If $q_i$ is a marked point, then the slope of the associated unbounded end is $\text{ord}_{q_i}(f^*\chi^m)$, where $\text{ord}_{q_i}$ stands for the multiplicity of $q_i$ in the divisor of $f^*\chi^m$.
\end{itemize}

\begin{rem}
The slope of the unbounded end associated to a given marked point $q_i$ is given both by the order of vanishing of $f^*\chi^m$ at $q_i$, and by the multiplicity of the section defined by $q_i$ in the divisor of $f^*\chi^m$ in the stable model $\mathcal{C}^{(t)}$. This is normal since the marked points provide divisors in $\mathcal{C}^{(t)}$ which are transverse to the special fiber. Concretely, in the rational case, if $y$ is a coordinate on $C$ and $f$ is given by
$$f:y\longmapsto\chi\prod_{i=1}^r(y-y(q_i))^{n_i}\in\text{Hom}(M,\CC((t))^*),$$
with $\chi\in\text{Hom}(M,\CC((t))^*)$, then the slope of the edge associated to the marked point $q_i$ is $n_i$.
\end{rem}

\begin{rem}
The special fiber is given by the equation $t=0$. Therefore $h(w)(m)=\text{ord}_{C_w}(f^*\chi^m)$ is the valuation in $t$ of the function evaluated at the generic point of $C_w$. Concretely, in the rational case, let $y$ be a coordinate on $C$ specializing to a coordinate on $C_w$, which is a copy of $\CC P^1$, such that no point specializes to $\infty$. Assume $f$ is given by
$$f:y\longmapsto\chi\prod_{i=1}^r(y-y(q_i))^{n_i}\in\text{Hom}(M,\CC((t))^*),$$
where $\chi\in\text{Hom}(M,\CC((t))^*)$. Then $h(w)(m)=\val(\chi(m))$.
\end{rem}

The fact that $(\Gamma,h)$ is indeed a parametrized tropical curve is proved in \cite{tyomkin2012tropical}. One essentially needs to check the balancing condition, and the fact that if $\gamma$ is an edge with extremities $v$ and $w$, the slope of $h(\gamma)$ lies in $N$, and the length of $h(w)-h(v)$ coincide with the length $|\gamma|$ in $\Gamma$. Finally, we can refine the tropicalization in the following way that is useful to compute quantum indices: on each $C_w$ the rational map $f$ specializes to give a parametrized complex rational curve $f_w:C_w\simeq\CC P^1\dashrightarrow\text{Hom}(M,\CC^*)$. Concretely, this is the curve we would obtain by taking the naive limit of $f$ in a coordinate specializing to a coordinate of $C_w$. Therefore, we have a tropical curve and a complex curve associated to every vertex.

\begin{rem}
All our curves are taken with coefficients in $\CC((t))$, which is not algebraically closed, and has a discrete valuation. Thus, every tropicalization data has coefficients in $\ZZ$. Instead we could take the algebraic closure, which is the field of Puiseux series $\CC\{\{t\}\}=\bigcup_{k\geqslant 1}\CC((t^\frac{1}{k}))$, but as we are using only a finite number of coefficients, all belong to $\CC((t^\frac{1}{k}))$ for some $k$, and by taking $u=t^\frac{1}{k}$ we reduce it to the previous case. Therefore, we can assume that everything is defined in $\CC((t))$, up to a change of base.
\end{rem}

\subsubsection{Tropicalization of a plane curve}

We finish by describing the tropicalization of a plane curve. This tropicalization is more elementary than the tropicalization of a parametrized curve. Moreover, the tropicalization of a parametrized curve gives a parametrization of the tropicalization of its image plane curve. Let $C$ be a plane curve, defined by a polynomial $P_t\in\CC((t))[M]$ with coefficients in $\CC((t))$. We look for the points of the curve over the Puiseux series, \textit{i.e.} in $N\otimes\CC\{\{t\}\}^*$. In a basis of $M$, the polynomial is given in coordinates by
$$P_t(x,y)=\sum_{(i,j)\in P_\Delta}a_{i,j}(t)x^iy^j.$$
We assume that the coefficients in the corners of $P_\Delta$ are non-zero. Then, we have the associated tropical polynomial
$$\text{Trop}(P_t)(x,y)=\max_{(i,j)\in P_\Delta}\left(\val(a_{i,j}(t))+ix+jy\right),$$
along with a valuation map, also called \textit{tropicalization map}:
$$\text{Val}:\chi\in \text{Hom}(M,\CC\{\{t\}\}^*)\longmapsto\text{val}\circ \chi\in \text{Hom}(M,\RR)=N_\RR.$$
In coordinates, $\text{Val}$ is given by the coordinatewise valuation:
$$\text{Val}:(x,y)\in(\CC\{\{t\}\}^*)^2\longmapsto (\val(x),\val(y))\in\RR^2.$$
The Kapranov theorem \cite{brugalle2014bit} then ensures that the closure of the image of the vanishing locus of $P_t$ in $(\CC\{\{t\}\}^*)^2$ under the valuation map is equal to the tropical curve defined by $\text{Trop}(P_t)$.

\begin{theo}[Kapranov]
Let $C_\text{trop}$ be the tropical curve defined by $\text{Trop}(P_t)$. Then, one has
$$\overline{\text{Val}(C)}=C_\text{trop}.$$
\end{theo}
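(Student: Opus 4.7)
The statement asserts two set inclusions, which I plan to handle separately.

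\textbf{Easy inclusion} ($\overline{\mathrm{Val}(C)} \subseteq C_{\text{trop}}$). It suffices to prove $\mathrm{Val}(C) \subseteq C_{\text{trop}}$, since $C_{\text{trop}}$ is closed. Fix $(x,y)\in C$, and set $w_1=\val(x)$, $w_2=\val(y)$. For each lattice point $(i,j)\in P_\Delta$ one has
$$\val\bigl(a_{i,j}(t) x^i y^j\bigr)=\val(a_{i,j}(t))+iw_1+jw_2.$$
If the maximum of these numbers were attained at a single index $(i_0,j_0)$, the ultrametric property of $\val$ on $\CC\{\{t\}\}$ would force $\val(P_t(x,y))=\val(a_{i_0,j_0})+i_0w_1+j_0w_2<+\infty$, contradicting $P_t(x,y)=0$. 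Hence the max is reached at least twice, which is exactly the condition $(w_1,w_2)\in C_{\text{trop}}$.

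\textbf{Reverse inclusion} ($C_{\text{trop}}\subseteq \overline{\mathrm{Val}(C)}$). Since $C_{\text{trop}}$ is a finite rational polyhedral complex of pure dimension one, the set of rational points lying in the relative interior of an edge is dense in it, so it is enough to lift every such point. Fix such a $w=(w_1,w_2)$ in the interior of an edge $E$ of $C_{\text{trop}}$, dual to an edge of the subdivision of $P_\Delta$ joining two lattice points $(i_0,j_0)$ and $(i_1,j_1)$; these are exactly the two maximizers of the tropical polynomial at $w$. Writing $a_{i,j}(t)=t^{\val(a_{i,j})}\tilde a_{i,j}(t)$ with $\tilde a_{i,j}(0)\neq 0$ for the two relevant indices, the initial form
$$\mathrm{in}_w(P_t)(u,v)=\tilde a_{i_0,j_0}(0)\,u^{i_0}v^{j_0}+\tilde a_{i_1,j_1}(0)\,u^{i_1}v^{j_1}$$
is a non-trivial binomial, whose vanishing locus in $(\CC^*)^2$ is a one-parameter family; we pick any solution $(u_0,v_0)$ at which at least one of the two partial derivatives of $\mathrm{in}_w(P_t)$ does not vanish (this is automatic away from the finite set $(i_0,j_0)=(i_1,j_1)$ which is empty).

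The plan is to construct a Puiseux series solution of $P_t=0$ of the form
$$x=t^{w_1}(u_0+\text{higher-order terms}),\qquad y=t^{w_2}(v_0+\text{higher-order terms}),$$
by a Hensel/Newton--Puiseux iteration. Concretely, after the substitution $x=t^{w_1}u$, $y=t^{w_2}v$ and multiplication by an appropriate power of $t$, the polynomial $P_t$ becomes a polynomial $Q(u,v,t)\in \CC\{\{t\}\}[u,v]$ whose reduction modulo positive powers of $t$ is $\mathrm{in}_w(P_t)(u,v)$. Using the non-vanishing of, say, $\partial_v\mathrm{in}_w(P_t)(u_0,v_0)$, the implicit function theorem for the complete valued field $\CC\{\{t\}\}$ produces a unique branch $v=v_0+\sum_{k\geqslant 1}v_k t^{s_k}$, parametrized by $u$ near $u_0$, solving $Q(u,v,t)=0$; setting $u=u_0$ yields the desired lift with $\val(x)=w_1$ and $\val(y)=w_2$.

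\textbf{Expected main obstacle.} The serious step is the Puiseux lift in the final paragraph: one has to show that the formal recursion produces exponents $s_k$ lying in a common $\tfrac1N\ZZ$ and tending to $+\infty$, so that the result is a genuine element of $\CC\{\{t\}\}$. This is the classical Newton--Puiseux argument, controlled by the Newton polygon of $Q$ viewed as a polynomial in $(v,t)$ with $u$ a parameter; the non-degeneracy of $(u_0,v_0)$ with respect to the binomial initial form guarantees that at each step a single slope of this Newton polygon is selected, yielding a strictly increasing sequence of positive rational exponents with bounded denominators. Everything else is bookkeeping and the density argument above.
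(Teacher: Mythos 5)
The paper does not actually prove this statement; it quotes Kapranov's theorem with a reference to \cite{brugalle2014bit}, so there is no internal argument to compare against. Your proposal is the standard proof: the ultrametric inequality for the easy inclusion, and lifting of points in the relative interiors of edges via initial forms plus a Hensel/Newton--Puiseux iteration for the hard one. The easy inclusion and the overall strategy are fine, modulo a sign convention the paper itself is sloppy about: with the usual valuation $\val(t)=1$, a vanishing sum must attain the \emph{minimum} of the valuations at least twice, not the maximum, so the statement as written only matches the max-plus polynomial after negating $\val$; your argument is the correct one once that convention is fixed.

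The genuine gap is in the reverse inclusion, at the sentence where you declare $\mathrm{in}_w(P_t)$ to be a binomial and pick a zero of it where some partial derivative is nonzero. For $w$ in the relative interior of an edge of $C_\text{trop}$, the set of indices achieving the extremum consists of \emph{all} lattice points on the dual edge of the subdivision whose affine functions tie; there can be three or more of them, so the initial form is in general a polynomial supported on a lattice segment, not a binomial. After a unimodular change of coordinates it is a monomial times a univariate polynomial $g$ with nonzero extreme coefficients, and $g$ may have only multiple roots: for $P_t=x^2-2x+1+ty$ the edge $\{w_1=0,\ w_2>-1\}$ has initial form $(u-1)^2$, every zero of which is degenerate, so the step ``pick $(u_0,v_0)$ where a partial derivative of the initial form does not vanish'' fails outright --- no such point exists --- and the implicit function theorem over $\CC\{\{t\}\}$ does not apply. (The theorem is still true there: $y=-(x-1)^2/t$ lifts every point of that edge, via $x=1+ct^s$.) To close the gap you must either run the full Newton--Puiseux algorithm, recursing on the degenerate root by a further substitution $v=v_0+t^{s}v'$ and a new Newton polygon until a simple root appears (termination being the classical content of Puiseux's theorem), or reduce to simple roots by a deformation or multiplicity-counting argument. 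Your final paragraph gestures at the Newton polygon, but it still leans on ``non-degeneracy of $(u_0,v_0)$ with respect to the binomial initial form,'' which is precisely the hypothesis that can fail.
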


Let $\alpha_{i,j}=\val(a_{i,j}(t))$, and $a_{i,j}(t)=t^{\alpha_{i,j}}a_{i,j}^0(t)$. The function $(i,j)\mapsto\alpha_{i,j}$ induces a convex subdivision of $P_\Delta$, which is dual to $C_\text{trop}$. As in the tropicalization of a parametrized curve, one can recover complex curves, by specializing the polynomial $P_t$ to one of the polygons of the subdivision. Let $\varpi$ be one of the polygons of the subdivision of $P_\Delta$. Then, the curve associated to $\varpi$ is given by $P_\varpi(x,y)=\sum_{(i,j)\in\varpi}a_{i,j}^0(0)x^iy^j=0$, defined over $\CC$.\\

One can show that if $\varphi:C\rightarrow N\otimes\CC\{\{t\}\}^*$ is a parametrized curve tropicalizing to $h:\Gamma\rightarrow N_\RR$, then the image $h(\Gamma)$ and the tropicalization of the image $\overline{\text{Val}(\varphi(C))}$ are the same. Moreover, the local parametrized curves $f_w:C_w\dashrightarrow N\otimes \CC^*$ resulting from the tropicalization as parametrized curve, are precisely the irreducible components of the curves defined by $P_\varpi=0$. 

\section{Quantum indices of real tropical curves}

We start this section by recalling the theorem about quantum indices by Mikhalkin \cite{mikhalkin2017quantum}, restricting ourselves to the case of rational curves. We then compute the quantum indices of some specific curves that appear in the resolution of our enumerative problem.

	\subsection{The quantum index of a type $I$ real curve}

Let
$$\varphi:t\in\CC\longmapsto \chi\prod_1^r(t-\alpha_i)^{n_i}\prod_1^s(t-\beta_j)^{n_j}(t-\overline{\beta_j})^{n_j}$$
be a parametrized real rational curve, with $\alpha_i\in\RR$, $\beta_j\in\CC\backslash\RR$, and $\chi\in N\otimes\CC^*$. Recall that the moment of the parametrized curve $(\CC P^1,\varphi)$ at a complex point $\beta_{j_0}$ is the quantity
$$\varphi^*\chi^{\iota_{n_{j_0}}\omega}|_{\beta_{j_0}}= \chi(\iota_{n_{j_0}}\omega)\prod_1^r(\beta_{j_0}-\alpha_i)^{\omega(n_{j_0},n_i)}\prod_1^s(\beta_{j_0}-\beta_j)^{\omega(n_{j_0},n_j)}(\beta_{j_0}-\overline{\beta_j})^{\omega(n_{j_0},n_j)}\in\CC^*.$$

\begin{defi}
In the above notations, we say that the rational curve has real or purely imaginary intersection points if $\varphi^*\chi^{\iota_{n_{j_0}}\omega}|_{\beta_{j_0}}\in i\RR$ for every $j_0$.
\end{defi}

\begin{rem}
At the real points $\alpha_i$, the moment is real since the function $\varphi$ is real, that is why we only check non-real points for the purely imaginary value. Geometrically, 
it means that the coordinates of the intersection points of the curve with the toric boundary are either real or purely imaginary. In both cases their square is real.
\end{rem}

Recall that we have the logarithmic map
$$\text{Log}:n\otimes z\in N\otimes\CC^*\longmapsto n\otimes\text{Log}|z|\in N\otimes\RR.$$
In a basis of $N$, it is the logarithm of the absolute value coordinate by coordinate. Similarly we define the argument map
$$2\arg :n\otimes z\in N\otimes\CC^*\longmapsto n\otimes 2\arg(z)\in N\otimes\RR/\pi\ZZ.$$
The parametrized real rational curve $\varphi:\CC P^1\rightarrow N\otimes \CC^*$ is of type $I$. Let $S$ be a connected component of $\CC P^1\backslash\RR P^1$, inducing a complex orientation of $\RR P^1$. By pulling back the volume form $\omega$ on $N_\RR$ to $N\otimes\CC^*$, we can define the logarithmic area of $S$:
$$\mathcal{A}_{\text{Log}}(S)=\int_{\varphi(S)}\text{Log}^*\omega.$$
Respectively, the $2$-form $\omega$ defines a $2$-form $\omega_\theta$ on $N\otimes\RR/\pi\ZZ$. We can pull it back to $N\otimes\CC^*$ and define the area of the co-amoeba of $S$:
$$\mathcal{A}_{\arg}(S)=\int_{\varphi(S)}(2\arg)^*\omega.$$
If $\omega$ is given in coordinates by $\omega=\text{d}x_1\wedge\text{d}x_2$, due to the vanishing of the meromorphic $2$-form $\frac{\text{d}z_1}{z_1}\wedge\frac{\text{d}z_2}{z_2}$ restricted to $S$, one has $\mathcal{A}_{\text{Log}}(S)=\mathcal{A}_{2\arg}(S)$.

\begin{theo}[Mikhalkin\cite{mikhalkin2017quantum}]
Let $\varphi:\CC P^1\dashrightarrow N\otimes\CC^*$ be a real parametrized rational curve with real or purely imaginary intersection points, enhanced with the choice of a connected component $S$ of $\CC P^1\backslash\RR P^1$, inducing a complex orientation of $\RR P^1$. Then there exists a half-integer $k(S,\varphi)$ such that
$$\mathcal{A}_{\arg}(S) = \mathcal{A}_{\text{Log}}(S) = k(S,\varphi)\pi^2.$$
\end{theo}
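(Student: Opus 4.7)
The plan is to deduce the quantization of $k(S,\varphi)$ from a topological argument. Since the equality $\mathcal{A}_{\text{Log}}(S) = \mathcal{A}_{\arg}(S)$ has already been noted---it follows from the vanishing of the holomorphic $(2,0)$-form $\frac{dz_1}{z_1}\wedge\frac{dz_2}{z_2}$ on the complex one-dimensional surface $\varphi(S)$---it suffices to prove that $\mathcal{A}_{\arg}(S)$ is a half-integer multiple of $\pi^2$. Write $\Phi := 2\arg\circ\varphi$ and $T := N\otimes\RR/\pi\ZZ$, a $2$-torus of total $\omega_\theta$-volume $\pi^2$.

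The first step is to double the area by complex conjugation. Since $\varphi$ is real, $\Phi\circ\sigma = -\Phi$ where $\sigma$ denotes complex conjugation on $\CC P^1$. The map $-\mathrm{id}$ preserves $\omega_\theta$ on the $2$-torus $T$, so the integrals of $\Phi^*\omega_\theta$ over $S$ and $\sigma(S)$ coincide, while the real locus $\RR P^1$ contributes nothing because it is one-dimensional. This reduces the theorem to proving
\begin{equation*}
\int_{\CC P^1}\Phi^*\omega_\theta \in \pi^2\ZZ.
\end{equation*}

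To prove this integrality, I would view $\Phi$ as a map to the torus and invoke degree theory: if $\Phi$ extended continuously to $\CC P^1$, the integral would simply be $\deg(\Phi)\cdot\pi^2$. However $\Phi$ is only defined away from the punctures $p_j$ (the intersection points of $\varphi$ with the toric divisors), and the local expansion $\varphi(t) \sim \chi_j(t-p_j)^{n_j}$ shows that a small loop around $p_j$ is sent by $\Phi$ to a closed loop representing $4n_j$ in $H_1(T) = N$. I would therefore excise small disks $D_j$ around the punctures, choose a primitive $\eta$ of $\Phi^*\omega_\theta$ on the surface with boundary $\CC P^1 \setminus \bigsqcup D_j$ (which exists because the top cohomology of this surface vanishes), and apply Stokes' theorem to reduce the integral to a sum of boundary contributions $-\sum_j\int_{\partial D_j}\eta$. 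The hypothesis of real or purely imaginary intersection points translates precisely into the statement that $2\arg\chi_j$ lies on the $1$-parameter subgroup of $T$ generated by $n_j$; a local computation using this constraint, together with the balancing relation $\sum n_j = 0$, shows that the total boundary contribution is an integer multiple of $\pi^2$.

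The main obstacle is the final puncture computation: each contribution $\int_{\partial D_j}\eta$ depends on the lift of $\Phi$ and the choice of primitive, and without the real/purely imaginary hypothesis it is generically an irrational multiple of $\pi^2$. The hypothesis is exactly what is needed to align the local loops $\Phi(\partial D_j)$ with the subgroup structure of $T$ so that all contributions quantize. Combining this integrality with the doubling from conjugation yields $\mathcal{A}_{\arg}(S) = k(S,\varphi)\pi^2$ with $k(S,\varphi) \in \frac{1}{2}\ZZ$, as desired.
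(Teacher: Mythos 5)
The paper itself does not prove this theorem---it is quoted from \cite{mikhalkin2017quantum}---so your proposal can only be judged on its own merits, and it contains a fatal sign error in the doubling step. Complex conjugation $\sigma$ on $\CC P^1$ is antiholomorphic, hence orientation-reversing, while $-\mathrm{id}$ does preserve $\omega_\theta$ on the torus; consequently $\sigma^*(\Phi^*\omega_\theta)=\Phi^*\omega_\theta$ as forms, but the change of variables gives
$$\int_{\sigma(S)}\Phi^*\omega_\theta=-\int_{S}\sigma^*\left(\Phi^*\omega_\theta\right)=-\int_{S}\Phi^*\omega_\theta,$$
so the two halves contribute \emph{opposite} areas, not equal ones (this is exactly why the paper says the quantum index changes sign with the choice of $S$). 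The integral over all of $\CC P^1$ is therefore identically zero for \emph{any} parametrized curve, with or without the real/purely-imaginary hypothesis---this is precisely the paper's lemma on the log-area of a complex curve in Section 3.3, proved there by the very excision-and-degree argument you go on to set up. Your reduction thus lands on the true but empty statement $0\in\pi^2\ZZ$, says nothing about $\mathcal{A}_{\arg}(S)$, and taken at face value would force every quantum index to vanish, contradicting the value $\pm\frac{1}{2}$ for a line.

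The information your symmetrization throws away is exactly what carries the theorem: one must integrate over $S$ alone and use that its boundary $\RR P^1$ is collapsed by $\Phi=2\arg\circ\varphi$ to the single point $0\in N\otimes\RR/\pi\ZZ$ (because $\varphi$ is real), while the punctures are sent into the geodesics through $0$ directed by the vectors $n_j$---this last point is where the real-or-purely-imaginary hypothesis enters, as you correctly identified. The quantization then comes from analyzing the locally constant degree function of $\Phi|_S$ on the complement of this shell of geodesics (the ``order map'' appearing in the paper's quadrivalent-vertex computation) and showing the resulting signed combination of polygon areas lies in $\frac{\pi^2}{2}\ZZ$; that is the genuinely hard step, and in your write-up it is only asserted, and moreover asserted for the wrong surface.
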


	\subsection{The quantum index near the tropical limit}
	
	In \cite{mikhalkin2017quantum}, Mikhalkin proved the following result, that computes the quantum index of curves in a family near the tropical limit.
	
	\begin{prop}\cite{mikhalkin2017quantum}
	Let $C^{(t)}=\left(f_t:\CC P^1\rightarrow\text{Hom}(M,\CC^*)\right)$ be a family of type I real parametrized rational curves, having real or purely imaginary intersection points, enhanced with a family of connected components of the complex locus $S^{(t)}$, inducing complex orientations of the curves. We assume that the family tropicalizes, in the sense of \ref{tropicalization}, to a parametrized tropical curve $h:\Gamma\rightarrow N_\RR$, such that components $S^{(t)}$ specialize to components $S_w$ of $C_w$ for every vertex $w\in\fix$, thus inducing complex orientations of the curves $C_w$. Then, all the curves $f_w:C_w\rightarrow\text{Hom}(M,\CC^*)$ have real or purely imaginary intersection points, and moreover, for $t$ large enough,
	$$k(S^{(t)},f_t)=\sum_w k(S_w,f_w),$$ 
where the sum is indexed over the fixed vertices of $\Gamma$.
\end{prop}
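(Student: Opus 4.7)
The plan is to compute the quantum index via the identity $\mathcal{A}_{\text{Log}}(S^{(t)}) = k(S^{(t)}, f_t)\pi^2$ and to exploit the additivity of this integral when the source $\CC P^1$ is decomposed according to the tropical structure. First I would verify that every $f_w$ (whether or not $w \in \fix$) inherits the ``real or purely imaginary intersection points'' property. Each marked point of $(\CC P^1, f_t)$ specializes under the stable reduction to a point on some component $C_w$, and its moment---a Laurent series in $t$ valued in $\RR$ or in $i\RR$---descends to the moment of $f_w$ at the specialized point as its leading-order coefficient, which must itself lie in $\RR$ or $i\RR$.

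Next I would decompose $\CC P^1$ into open sets $U_w$, one per vertex $w$ (each shrinking onto $C_w$ as $t \to \infty$), together with thin tubular neighborhoods $T_e$ of the nodes, one per bounded edge $e$. This gives
$$\mathcal{A}_{\text{Log}}(S^{(t)}) = \sum_w \int_{S^{(t)} \cap U_w} f_t^*\,\text{Log}^*\omega + \sum_e \int_{S^{(t)} \cap T_e} f_t^*\,\text{Log}^*\omega.$$
On a tube $T_e$ with local model $xy = t^\ell$, the map $f_t$ is asymptotically a $t$-dependent monomial in $x$ alone (after substituting $y = t^\ell/x$), so both $\arg f_{t,1}$ and $\arg f_{t,2}$ are affine functions of $\arg x$ on the tube; since on a holomorphic curve $f_t^*\,\text{Log}^*\omega = f_t^*(d\arg z_1 \wedge d\arg z_2)$, the 2-form degenerates and each tube contribution is $o(1)$ as $t \to \infty$. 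For a fixed vertex $w$, continuity together with the hypothesis on the specialization of $S^{(t)}$ gives $\int_{S^{(t)} \cap U_w} f_t^*\,\text{Log}^*\omega \longrightarrow \int_{S_w} f_w^*\,\text{Log}^*\omega = k(S_w, f_w)\pi^2$.

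The remaining step, and the main obstacle, is to show that for each non-fixed vertex $v$ whose component $C_v$ lies in the specialization of $S^{(t)}$, the limiting contribution $\int_{C_v} f_v^*\,\text{Log}^*\omega$ vanishes. My strategy is to use the holomorphic identification $f_v^*\,\text{Log}^*\omega = f_v^*(d\arg z_1 \wedge d\arg z_2)$ and to apply Stokes' theorem on $C_v$ with small disks removed around each marked point. Each resulting boundary integral reduces to a contour integral at a puncture where $2\arg \circ f_v$ takes the value $0 \in \RR/\pi\ZZ$, by the property established in the first paragraph. Tracking how real points and conjugate pairs of marked points of $\CC P^1$ specialize to $C_v$ then pairs the punctures into groups whose contour contributions cancel, yielding the required vanishing. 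Combined with the fixed-vertex and tube analyses, this gives the additivity $k(S^{(t)}, f_t) = \sum_{w \in \fix} k(S_w, f_w)$.
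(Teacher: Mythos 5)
Your overall decomposition (vertex regions plus node tubes, with the tube contributions tending to zero and the fixed-vertex contributions converging to $k(S_w,f_w)\pi^2$) is the standard route; note that the paper itself does not reprove this proposition but cites Mikhalkin for it, and only supplies the one ingredient you struggle with. The genuine gap is precisely your treatment of the non-fixed vertices. First, the cancellation mechanism you propose cannot work as described: for a vertex $v\notin\fix$, at most one point of each conjugate pair $z_j^{\pm}$ specializes onto $C_v$ (its conjugate specializes onto $C_{\sigma(v)}$), and $f_v$ is not a real curve, so there is no conjugation acting on $C_v$ that would pair up the punctures and cancel the Stokes boundary terms; moreover $\arg(f_{v,1})$ is multivalued on the punctured curve, so the Stokes step itself needs branch cuts and is not a routine reduction to contour integrals. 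Second, and more importantly, the reality or purely-imaginary nature of the intersection points is irrelevant to this step: the vanishing of $\int_{C_v}f_v^*\,\text{Log}^*\omega$ holds for an \emph{arbitrary} complex parametrized curve. The paper proves exactly this in the lemma on the log-area of a complex curve (Section \ref{local computations}): $\text{Log}\circ f_v$ is a proper map from the punctured curve to $N_\RR$ whose image, the amoeba, is a proper subset of $N_\RR$, hence the map has degree zero, so the pullback of any compactly supported $2$-form integrates to zero, and writing $\omega$ as a sum of such forms via a partition of unity gives the vanishing. You should replace your Stokes-and-pairing argument by this degree argument.

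A secondary point: the asserted conclusion that every $f_w$ has real or purely imaginary intersection points concerns \emph{all} intersection points of $C_w$ with the toric boundary, including those coming from the nodes of the special fiber (the bounded edges of $\Gamma$ adjacent to $w$), not only those coming from the marked points; your first paragraph only treats the latter, so this part of the statement is also left unproved.
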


\begin{rem}
In particular, and this happens in the proof of the correspondence theorem, for one to know the quantum index of curves near the tropical limit, one only needs to know the quantum indices of the curves associated to the vertices of the tropical curve, and the way they are glued together along the edges. This means that the quantum index may be computed in the patchworking construction. 
\end{rem}

The computation near the tropical limit allows us to reduce the calculations needed to compute the quantum indices of oriented curves: we only need to compute the quantum indices of oriented curves associated to the vertices of a tropical curve. This includes real rational curves with three real intersection points, and real rational curves with two real and two complex conjugated intersection points. The following lemma reduces the computation of these two cases to two computations, dealt with in the next subsection.\\

We prove that the quantum index is well-behaved under the monomial maps, which are covering maps from the complex torus to itself. This allows us to restrict the already reduced computations to a few cases easier to compute.

\begin{lem}
Let $\varphi : \CC C\dashrightarrow\text{Hom}(M,\CC^*)$ be a type $I$ real curve with a choice of a connected component $S\subset \CC C\backslash\RR C$, inducing a complex orientation, and let $\alpha:\text{Hom}(M,\CC^*)\rightarrow \text{Hom}(M',\CC^*)$ be a monomial map, associated to a morphism $A^T : M'\rightarrow M$. We consider the composition
$$\psi : \CC C \stackrel{\varphi}{\longrightarrow}\text{Hom}(M,\CC^*) \stackrel{\alpha}{\longrightarrow}\text{Hom}(M',\CC^*).$$
Let $\omega$ and $\omega'$ be the volume forms on respectively $N$ and $N'$, dual lattices of $M$ and $M'$, so that we have $A^*\omega'=\det A\omega$. Then, we have
$$\int_{\psi(S)}\text{Log}^*\omega=\det A \int_{\varphi(S)}\text{Log}^*\omega \text{ and } \int_{\psi(S)}(2\arg)^*\omega_\theta = \det A \int_{\varphi(S)}(2\arg)^*\omega_\theta.$$
\end{lem}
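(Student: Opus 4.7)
The plan is to chase the commutative diagram obtained by identifying how $\text{Log}$ and $2\arg$ intertwine the monomial map $\alpha$ with the linear map $A:N\rightarrow N'$ dual to $A^T:M'\rightarrow M$, and then to invoke the chain rule for pullbacks together with the given determinant identity $A^*\omega'=\det A\cdot\omega$.

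First I would establish the two intertwining relations
$$\text{Log}\circ\alpha=A\circ\text{Log},\qquad (2\arg)\circ\alpha=A\circ(2\arg).$$
These follow directly from unwinding the definitions of a monomial map. For $\chi\in\text{Hom}(M,\CC^*)$ and any $m'\in M'$ one has $\alpha(\chi)(m')=\chi(A^Tm')$, hence
$$\text{Log}(\alpha(\chi))(m')=\log|\chi(A^Tm')|=\text{Log}(\chi)(A^Tm')=\bigl(A\cdot\text{Log}(\chi)\bigr)(m'),$$
the last equality being the defining adjunction between $A$ and $A^T$. The argument case is identical, taking $\arg$ mod $\pi$ in the appropriate quotient.

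With these two squares in hand, the lemma becomes a one-line computation. Functoriality of pullback and the intertwining give
$$\psi^*\text{Log}^*\omega'=\varphi^*\alpha^*\text{Log}^*\omega'=\varphi^*\text{Log}^*A^*\omega'=\det A\cdot\varphi^*\text{Log}^*\omega,$$
where the last step uses the hypothesis $A^*\omega'=\det A\cdot\omega$. Integrating over $S$ yields the first identity. For the second identity, the same calculation applies verbatim: the induced map $A:N_\RR/\pi N\to N'_\RR/\pi N'$ is well-defined because $A(\pi N)\subset\pi N'$, and since $\omega_\theta$, $\omega_\theta'$ are the volume forms induced by $\omega$, $\omega'$ on these quotients, one still has $A^*\omega_\theta'=\det A\cdot\omega_\theta$.

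There is no serious obstacle here. The content of the lemma is entirely the compatibility of the Log and argument maps with monomial maps, which is essentially tautological; the only point requiring care is the duality bookkeeping, namely keeping straight that the dual of $A^T:M'\rightarrow M$ is $A:N\rightarrow N'$, so that $A^*$ acts on the factor $\Lambda^2 M'$ containing $\omega'$ and returns a form on $N$ proportional to $\omega$. Note also that we do not need to assume $\det A>0$: signs are handled correctly because each integral is taken over the oriented chain $S$, and the factor $\det A$ (rather than $|\det A|$) correctly accounts for how $A$ acts on orientations.
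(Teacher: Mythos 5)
Your proof is correct and follows essentially the same route as the paper: both establish the intertwining relations $\text{Log}\circ\alpha=A\circ\text{Log}$ and $(2\arg)\circ\alpha=A\circ(2\arg)$ (the paper records them as commutative diagrams) and then conclude by functoriality of pullback together with $A^*\omega'=\det A\cdot\omega$. Your version merely makes the verification of the intertwining more explicit; no substantive difference.
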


\begin{rem}
The proposition deals with the computation of log-area in the general case of a real curve. This log-area is a quantum index only if the curve has real or purely imaginary intersection points with the toric boundary. In many cases, we use this proposition to reduce the computation of a quantum index to a log-area of a curve which does not necessarily have a quantum index.
\end{rem}

\begin{rem}
Notice that the different notations $\text{Hom}(M,\CC^*)$ and $\text{Hom}(M',\CC^*)$ prevent any mistakes in the direction of the various involved maps.
\end{rem}

\begin{proof}
Let $N$ and $N'$ be the dual lattices of $M$ and $M'$, so that we have linear maps
$$A^T:M'\rightarrow M,
A:N\rightarrow N'.$$
Then, we have the following commutative diagrams:
$$\begin{array}{ccc}
\begin{tikzcd}
    \CC C \arrow{dr}{\psi} \arrow{d}{\varphi} &  \\
     N\otimes\CC^* \arrow{d}{\text{Log}} \arrow{r}{\alpha}& N'\otimes \CC^* \arrow{d}{\text{Log}} \\
    N_\RR \arrow{r}{A} &  N'_\RR \\
  \end{tikzcd}
  & \text{ and } &
  \begin{tikzcd}
    \CC C \arrow{dr}{\psi} \arrow{d}{\varphi} &  \\
     N\otimes\CC^* \arrow{d}{\arg} \arrow{r}{\alpha}& N'\otimes \CC^* \arrow{d}{\arg} \\
    N\otimes\RR/2\pi\ZZ \arrow{r}{A} &  N'\otimes\RR/2\pi\ZZ \\
  \end{tikzcd} \\
  \end{array}.$$
We denote by $\omega$, $\omega'$ the volume forms of the lattices $N$ and $N'$ used to compute the log-areas. Then, we have
\begin{align*}
\int_{\psi(S)}\text{Log}^*\omega' & =\int_S (\text{Log}\circ\psi)^*\omega' \\
 & = \int_S (A\circ\text{Log}\circ\varphi)^*\omega' \\
 & = \int_S (\text{Log}\circ\varphi)^*(A^*\omega') \\
 & = \det A\int_S (\text{Log}\circ\varphi)^*\omega \text{ since }A^*\omega'=(\det A)\omega,\\
 & = \det A\int_{\varphi(S)}\text{Log}^*\omega.\\
\end{align*}
Proof is completely similar for the argument maps.
\end{proof}

	\subsection{Local computations}
\label{local computations}

In this section, we compute the quantum indices of some auxiliary rational curves. This includes complex conjugated rational curves, a real rational curve with three intersection points with the toric boundary, and a real rational curve with two real and two complex intersection points with the toric boundary.

\subsubsection{Log-area of a complex curve}

We begin by proving that the log-area of a complex curve is zero. This proves that the non-fixed vertices of the tropical curve have no contribution to the quantum index. This justifies the fact that the quantum index near the tropical limit is obtained as a sum over the fixed vertices, and not the pairs of exchanged vertices. The following statement is not specific to rational curves or real curves. 

\begin{lem}
Let $\varphi:\CC C\dashrightarrow N\otimes \CC^*$ be a complex parametrized curve, with $\CC C$ a smooth Riemann surface. Then
$$\int_{\CC C}\text{Log}^*\omega=\int_{\CC C}(2\arg)^*\omega_\theta=0.$$
\end{lem}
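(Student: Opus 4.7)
The plan is to work in coordinates $(z_1,z_2)$ on $N\otimes\CC^*\simeq(\CC^*)^2$ chosen so that $\omega=dx_1\wedge dx_2$; then $\text{Log}^*\omega=d\log|z_1|\wedge d\log|z_2|$ and $(2\arg)^*\omega_\theta$ is a constant multiple of $d\arg z_1\wedge d\arg z_2$. Since $\CC C$ is a Riemann surface, it carries no nonzero holomorphic $(2,0)$- nor antiholomorphic $(0,2)$-form, so both $\frac{dz_1}{z_1}\wedge\frac{dz_2}{z_2}$ and $\frac{d\bar z_1}{\bar z_1}\wedge\frac{d\bar z_2}{\bar z_2}$ restrict to zero on $\CC C$. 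Taking real parts of the first vanishing already gives $d\log|z_1|\wedge d\log|z_2|=d\arg z_1\wedge d\arg z_2$ on $\CC C$, so the two integrals in the statement are proportional and it suffices to establish $\int_{\CC C}\text{Log}^*\omega=0$.

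To rewrite the integrand in a form suitable for Stokes, I would expand $d\log|z_i|=\tfrac12\bigl(\tfrac{dz_i}{z_i}+\tfrac{d\bar z_i}{\bar z_i}\bigr)$ and discard the vanishing pure-type pieces to obtain $\text{Log}^*\omega=\tfrac12\,\re\!\left(\tfrac{dz_1}{z_1}\wedge\tfrac{d\bar z_2}{\bar z_2}\right)$ on $\CC C$. Away from the zeros and poles of $z_1z_2$, this $(1,1)$-form is exact: setting $\eta=\log|z_1|^2\cdot\tfrac{d\bar z_2}{\bar z_2}$ one has $d\eta=\bigl(\tfrac{dz_1}{z_1}+\tfrac{d\bar z_1}{\bar z_1}\bigr)\wedge\tfrac{d\bar z_2}{\bar z_2}$, and the second summand vanishes on $\CC C$ as a $(0,2)$-form.

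I would then apply Stokes' theorem on $\CC C_\epsilon=\CC C\setminus\bigcup_p B_\epsilon(p)$ with $p$ ranging over the finite set of zeros and poles of $z_1z_2$, and pass to the limit $\epsilon\to 0$. Near a zero or pole of $z_1$ (with $z_2$ regular and non-vanishing), the pullback of $\tfrac{d\bar z_2}{\bar z_2}$ to $\partial B_\epsilon$ is $O(\epsilon)$ while $\log|z_1|^2=O(\log\epsilon)$, so the boundary contribution is $O(\epsilon\log\epsilon)\to 0$. Near a zero or pole of $z_2$ of order $m$ (with $z_1(p)\neq 0,\infty$), a local polar computation gives that the pullback of $\tfrac{d\bar z_2}{\bar z_2}$ converges to $-im\,d\phi$, so the boundary integral tends to $-2\pi i\,m\log|z_1(p)|^2\in i\RR$. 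Summing, $\int_{\CC C}\tfrac{dz_1}{z_1}\wedge\tfrac{d\bar z_2}{\bar z_2}$ is purely imaginary and its real part vanishes, proving the lemma.

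The main obstacle I expect is the careful limit analysis at the singular points: one must check that the logarithmic blow-up of $\log|z_1|^2$ does not produce a nonzero real contribution, and that only purely imaginary residues arise at the zeros and poles of $z_2$. The key point making this work is that at a regular non-vanishing point of $z_j$, the tangential $d\phi$-component of $d\log|z_j|$ vanishes to first order in the radial variable, a direct consequence of holomorphicity in polar coordinates. The same fact also guarantees the integrability of $\text{Log}^*\omega$ on all of $\CC C$, so that the limit $\epsilon\to 0$ of $\int_{\CC C_\epsilon}\text{Log}^*\omega$ genuinely equals $\int_{\CC C}\text{Log}^*\omega$.
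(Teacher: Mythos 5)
Your argument takes a genuinely different route from the paper's: the paper notes that $\text{Log}\circ\varphi:\CC C^o\to N_\RR$ is a proper, non-surjective map between oriented surfaces (its image is the amoeba), hence has degree zero, so the pullback of any compactly supported $2$-form integrates to zero, and a partition of unity finishes the proof. Your Stokes/residue computation is a legitimate alternative, and the identities in your first two paragraphs (the reduction to $\tfrac12\re\bigl(\tfrac{dz_1}{z_1}\wedge\tfrac{d\bar z_2}{\bar z_2}\bigr)$ and the primitive $\eta=\log|z_1|^2\,\tfrac{d\bar z_2}{\bar z_2}$) are correct.

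There is, however, a gap in your case analysis at the singular points: you treat a zero or pole of $z_1$ with $z_2$ regular non-vanishing, and vice versa, but you omit the case where $\varphi^*z_1$ and $\varphi^*z_2$ are simultaneously singular at the same point $p$. For a curve in a toric surface this is the typical situation — it occurs at every end whose direction $n=(a,b)$ has $ab\neq 0$, e.g. the ends of direction $e_1+e_2$ for plane curves of degree $d$. At such a point, writing $z_1=w^ku$, $z_2=w^mv$ in a local coordinate, one has $\log|z_1|^2\sim 2k\log\epsilon$ on $\partial B_\epsilon$ while the pullback of $\tfrac{d\bar z_2}{\bar z_2}$ tends to $-im\,d\phi$, so $\int_{\partial B_\epsilon}\eta$ diverges like $-4\pi ikm\log\epsilon$; correspondingly $\tfrac{dz_1}{z_1}\wedge\tfrac{d\bar z_2}{\bar z_2}$ has a non-integrable $\tfrac{km}{|w|^2}\,dw\wedge d\bar w$ singularity, and your concluding sentence ``the integral is purely imaginary'' is not well-posed as stated. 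The argument is nonetheless salvageable by keeping only real parts throughout: $\text{Log}^*\omega$ itself is absolutely integrable (its singularity is only $O(1/|w|)$, since the $d\log|w|\wedge d\log|w|$ term cancels), and the real part of each boundary term is $O(\epsilon\log\epsilon)$ in every case — in the mixed case the divergent piece $-im\,d\phi$ is purely imaginary and multiplies the real function $\log|z_1|^2$, while the real part of the pullback of $\tfrac{d\bar z_2}{\bar z_2}$ is $O(\epsilon)$. Hence $\lim_{\epsilon\to0}\re\int_{\partial\CC C_\epsilon}\eta=0$, which is all you need. You should also state explicitly that $\CC C$ is closed: both your application of Stokes (so that $\partial\CC C_\epsilon$ consists only of the small circles) and the statement itself fail for, say, an embedded disk.
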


\begin{proof}
The two integrals are known to be equal by the vanishing of the holomorphic $2$-form given in coordinates by $\frac{\dd z_1}{z_1}\wedge\frac{\dd z_2}{z_2}$. Let $\CC C^o$ be the open set of $\CC C$ where $\varphi$ is defined. We consider the map $\text{Log}\circ\varphi:\CC C^o\rightarrow N_\RR$. This is a proper map between smooth oriented manifolds, therefore it has a well-defined degree, which corresponds both to the number of antecedents counted with signs over a generic point, and to the map $\RR =H^2_c(N_\RR)\stackrel{ (\text{Log}\circ\varphi)^*}{\longrightarrow} H^2_c(\CC C^o)=\RR$ between compactly supported cohomology groups. Since the map is not surjective, its degree is zero. Hence, if $\tilde{\omega}$ is a compactly supported $2$-form on $N_\RR$, then $\int_{\CC C^o}(\text{Log}\circ\varphi)^*\tilde{\omega}=0$. Thus, by writing $\omega$ as a (infinite) sum of compactly supported $2$-forms using partitions of unity, we get the result.
\end{proof}

\subsubsection{Trivalent real vertex} We first recall the computation of the quantum index of a rational curve with three real punctures. This was dealt with in \cite{mikhalkin2017quantum}.

\begin{lem}
Let $\Delta\subset N$ be a family of three vectors of total sum $0$, and let $P_\Delta\subset M$ be the associated triangle, of lattice area $m_\Delta$. Let $\varphi : \CC P^1\dashrightarrow N\otimes\CC^*$ be a real parametrized rational curve of degree $\Delta$, thus having a unique real intersection point with maximal tangency with each toric divisor. Then, the quantum index of the curve is $\pm \frac{m_\Delta}{2}$ according to the choice of complex orientation.
\end{lem}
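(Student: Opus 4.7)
The plan is to reduce the general statement to the standard case $\Delta_0 = \{e_1', e_2', -e_1'-e_2'\}$ via the monomial map lemma, and then to evaluate the log-area directly for the standard line in $\CC P^2$. Introduce an auxiliary lattice $N' = \ZZ^2$ and define the integer linear map $A : N' \to N$ by $A e_1' = n_1$ and $A e_2' = n_2$; since the three slopes of a trivalent vertex span $N_\RR$, the vectors $n_1, n_2$ are $\RR$-independent and $\det A = \omega(n_1, n_2) = \pm m_\Delta$. Let $\alpha : N' \otimes \CC^* \to N \otimes \CC^*$ be the associated monomial map, and let $\Psi : \CC P^1 \dashrightarrow N' \otimes \CC^*$ be a real parametrized rational curve of degree $\Delta_0$ with the same three punctures $\alpha_1, \alpha_2, \alpha_3$ as $\varphi$. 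A direct monomial identity yields $\alpha \circ \Psi = c \cdot \varphi$ for some constant $c \in N \otimes \CC^*$; multiplication by such a constant shifts the $\text{Log}$-image by a fixed translation in $N_\RR$ and hence does not affect $\int \text{Log}^* \omega$. The monomial map lemma then gives
$$\int_{\varphi(S)} \text{Log}^* \omega = \int_{(\alpha \circ \Psi)(S)} \text{Log}^* \omega = (\det A) \int_{\Psi(S)} \text{Log}^* \omega' = \pm m_\Delta \int_{\Psi(S)} \text{Log}^* \omega',$$
so it suffices to evaluate the log-area of $\Psi$.

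For the standard case, use the $\text{PGL}(2,\RR)$-action on the source $\CC P^1$ to send $(\alpha_1, \alpha_2, \alpha_3)$ to $(0, 1, \infty)$, and a further torus translation (again invisible to the log-area) to reduce $\Psi$ to the explicit form $t \mapsto (t, t-1)$. Take $S = \{t = x+iy : y > 0\}$. A short Jacobian computation gives
$$(\text{Log} \circ \Psi)^* \omega' = \frac{y}{(x^2+y^2)((1-x)^2+y^2)}\, \dd x \wedge \dd y.$$
The inner integral over $x \in \RR$ is evaluated by residue calculus at the simple poles $iy$ and $1+iy$ in the upper $x$-half-plane, giving $\frac{2\pi}{y(1+4y^2)}$; the outer integral over $y > 0$ is then $\int_0^\infty \frac{2\pi}{1+4y^2}\, \dd y = \frac{\pi^2}{2}$ by the arctangent antiderivative. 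Thus $\mathcal{A}_\text{Log}(S) = \pi^2/2$ in the standard case, while the opposite component $\bar S$ gives $-\pi^2/2$. Combined with the reduction, $\varphi$ has quantum index $\pm m_\Delta / 2$.

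The only real technical work is the explicit residue computation in the second step; it is short but requires careful bookkeeping. The chief conceptual subtlety is aligning signs: the choices of $S$ on the two sides of $\alpha$ and the sign of $\det A$ must combine consistently, which can be settled either by tracking orientations under $\alpha$ or by checking one explicit oriented example to fix the convention.
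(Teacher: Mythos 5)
Your proof is correct and follows essentially the same route as the paper: reduce to the standard line via the monomial map lemma, picking up the factor $\det A=\pm m_\Delta$. The only difference is that the paper simply quotes Mikhalkin's computation of the quantum index $\pm\frac{1}{2}$ of a line, whereas you verify it directly by the residue computation (which checks out: the inner integral is indeed $\frac{2\pi}{y(1+4y^2)}$ and the outer integral gives $\frac{\pi^2}{2}$), making the argument self-contained.
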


\begin{proof}
The assumption implies that the curve is the image of a line by a monomial map of determinant $m_\Delta$. Hence, its quantum index is the quantum index of a line, equal to $\pm\frac{1}{2}$, times the determinant of the monomial map which is the lattice area of the triangle.
\end{proof}

\subsubsection{Quadrivalent complex vertex} We now consider the case of a curve associated with a quadrivalent vertex having two edges exchanged by the involution $\sigma$, and two edges fixed. This means that this is a rational curve having two real punctures, and two conjugated ones. In a suitable choice of coordinates, the curve has a degree of the following form. In a basis $(e_1,e_2)$ of $N$, for $m_1,m_2,m_3\in\NN^*$, let us take
$$\Delta(m_1,m_2,m_3)=\{(m_1,2m_2);(0,m_3-m_2)^2;(-m_1,-2m_3)\}.$$
The degree of a planar curve which is parametrized by a curve of degree $\Delta(m_1,m_2,m_3)$ is the lattice polygon in $M$ given by
$$P_\Delta(m_1,m_2,m_3)=\text{Conv}\left( (0,m_1),(2m_2,0),(2m_3,0) \right).$$

\begin{figure}
\begin{center}
\definecolor{cqcqcq}{rgb}{0.7529411764705882,0.7529411764705882,0.7529411764705882}
\begin{tikzpicture}[line cap=round,line join=round,>=triangle 45,x=0.4cm,y=0.4cm]
\draw [color=cqcqcq,, xstep=0.4cm,ystep=0.4cm] (-1.,-1.) grid (7.,5.);
\clip(-1.,-1.) rectangle (7.,5.);
\draw (0.,4.)-- (2.,0.);
\draw (2.,0.)-- (6.,0.);
\draw (6.,0.)-- (0.,4.);
\begin{scriptsize}
\draw [fill=black] (0.,4.) circle (0.5pt);
\draw[color=black] (0.14,4.5) node {$(0,m_1)$};
\draw [fill=black] (2.,0.) circle (0.5pt);
\draw[color=black] (1.14,-0.5) node {$(2m_2,0)$};
\draw[color=black] (0.0,1.7) node {$E_3$};
\draw [fill=black] (6.,0.) circle (0.5pt);
\draw[color=black] (5.7,-0.5) node {$(2m_3,0)$};
\draw[color=black] (3.9,-0.66) node {$E_1$};
\draw[color=black] (3.5,2.44) node {$E_2$};
\end{scriptsize}
\end{tikzpicture}

\caption{\label{PDelta}The polygon $P_\Delta(m_1,m_2,m_3)$.}

\end{center}
\end{figure}
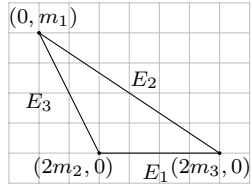

This polygon is drawn on Figure \ref{PDelta}. Up to an automorphism of the lattice, every triangle in $M$ having a side of even length is one of the polygons $P_\Delta(m_1,m_2,m_3)$. Let $E_i$ be the side opposite to the $i$-th vertex in $P_\Delta(m_1,m_2,m_3)$, \textit{i.e.}
$$E_1=\left[ (2m_2,0),(2m_3,0) \right] ,\ E_2=\left[ (2m_3,0),(0,m_1) \right] , \ E_3=\left[ (2m_2,0),(0,m_1) \right].$$
We denote by $\CC E_i$ the associated toric divisor inside the toric surface $\CC\Delta(m_1,m_2,m_3)$.\\

We take a real point on $\CC E_3$ and two purely imaginary conjugated points on $\CC E_1$, and look for real rational curves of degree $\Delta(m_1,m_2,m_3)$, maximally tangent to each toric divisor at the given points. The Menelaus theorem ensures that there exists a unique point on $\CC E_2$ such that each curve passing through the three chosen points also pass through the point on $\CC E_2$. Such a curve has a parametrization of the form
$$\varphi(t)=\left( a(t-c)^{m_1},b(t-c)^{2m_2}(t^2+1)^{m_3-m_2} \right)\in(\CC^*)^2,$$
where $c$ is some real number corresponding to the coordinate of the intersection point with $\CC E_3$, and $a,b\in\RR^*$. The intersection point with $\CC E_2$ corresponds to the coordinate $t$ taking the infinite value. The condition to pass through the specific points are given by the following equations:
$$a(i-c)^{m_1}=i\lambda\in i\RR^* \text{ and }\frac{b^\frac{m_1}{\delta}}{a^{\frac{2m_2}{d}}}(c^2+1)^{\frac{m_1}{\delta}(m_3-m_2)}=\mu\in\RR^*,$$
where $\delta=m_1\wedge(2m_2)$ is the integer length of $E_3$. The first equation solves for $c$ and $a$, and the second equation solves for $b$ with a unique solution if $\frac{m_1}{\delta}$ is odd, and $0$ or $2$ solution according to the sign of $\mu$ when it is even. The first equation implies that $(i-c)^{m_1}\in i\RR$ and thus we can write it $i-c=re^{i\pi\frac{2k+1}{2m_1}}$ with $k\in\ZZ$ and $r\in\RR$. Therefore, we have $i-re^{i\pi\frac{2k+1}{2m_1}}=c\in\RR$. Hence,
$$\begin{array}{rcl}
\im\left( i-re^{i\pi\frac{2k+1}{2m_1}} \right)=1-r\sin\left( \pi\frac{2k+1}{2m_1} \right)= 0 & \Rightarrow & r=\frac{1}{\sin\left( \pi\frac{2k+1}{2m_1} \right)} \\
 & \Rightarrow & c=r\cos\left( \pi\frac{2k+1}{2m_1} \right) = \cot\left( \pi\frac{2k+1}{2m_1} \right)=c_k.\\
\end{array}$$
We have proven that $c$ can only take a finite number of values $c_k=\cot\left( \pi\frac{2k+1}{2m_1} \right)$, for $k\in[\![0;m_1-1]\!]$. For each value of $c_k$ we find a unique $a$, and then solve for $b$ eventually. Thus, we have proven that up to the action of the real torus $(\RR^*)^2$, every real curve having purely imaginary intersection with $\CC E_1$, and real intersection with both $\CC E_2$ and $\CC E_3$ is one of the curves
$$\psi_k:t\longmapsto \left( (t-c_k)^{m_1},(t-c_k)^{2m_2}(t^2+1)^{m_3-m_2}\right).$$
These parametrized curves are the respective images of the curves
$$\varphi_k:t\longmapsto\left( t-c_k,\frac{t^2+1}{t-c_k}\right),$$
by the monomial map $\alpha:(z,w)\mapsto(z^{m_1},z^{m_3+m_2}w^{m_3-m_2})$. Therefore, in order to compute the quantum indices of the oriented curves $\psi_k$, we just need to compute the Log-areas of the oriented curves $\varphi_k$.

\begin{lem}
Let $\mathbb{H}$ denotes the half-plane $\{\im\ t>0\}$, inducing a complex orientation of $\RR P^1$. For any $k\in[\![0;m_1-1]\!]$, we have
$$\int_{\varphi_k(\mathbb{H})}\text{Log}^*\omega=\int_{\varphi_k(\mathbb{H})}\arg^*\omega_\theta=\left( \frac{2k+1}{m_1}-1\right)\pi^2.$$
In particular, the quantum index of $\psi_k$ is
$$k(\mathbb{H},\psi_k)=(m_3-m_2)(2k+1-m_1).$$
\end{lem}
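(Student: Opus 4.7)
The plan is to compute the log-area of $\varphi_k(\mathbb{H})$ directly via Stokes's theorem, being careful to keep track of contributions both at the puncture $t=i\in\mathbb{H}$ \emph{and} at infinity. Pulling back via $\varphi_k$ and using $(t^2+1)/(t-c_k)=(t-i)(t+i)/(t-c_k)$, together with $d\log|t-c_k|\wedge d\log|t-c_k|=0$, one obtains
\[
\varphi_k^*\text{Log}^*\omega \;=\; d\log|t-c_k|\wedge d\log|t-i| \;+\; d\log|t-c_k|\wedge d\log|t+i|.
\]
The vanishing of the holomorphic $(2,0)$-form $d\log z_1\wedge d\log z_2$ on a one-dimensional complex curve, already invoked in the excerpt, simultaneously proves the equality of log-area and co-amoeba area and lets us replace each $d\log|\cdot|$ by $d\arg$. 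Setting $\alpha_k=\operatorname{arccot}(c_k)=\pi\tfrac{2k+1}{2m_1}\in(0,\pi)$, the task reduces to evaluating
\[
A=\int_{\mathbb{H}}d\arg(t-c_k)\wedge d\arg(t-i),\qquad B=\int_{\mathbb{H}}d\arg(t-c_k)\wedge d\arg(t+i),
\]
and proving $A+B=\bigl(\tfrac{2k+1}{m_1}-1\bigr)\pi^2$.

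I would apply Stokes to the primitive $\arg(t-c_k)\,d\arg(t\pm i)$ on the truncated domain $\mathbb{H}_R\setminus B_\varepsilon(i)$ and then let $R\to\infty,\ \varepsilon\to 0$. The boundary contributes three pieces. On the real axis, the continuous extension from $\mathbb{H}$ gives $\arg(x-c_k)=\pi$ for $x<c_k$ and $0$ otherwise, so only the ray $(-\infty,c_k)$ matters; using $\arg(x\mp i)=\mp\operatorname{arccot}(x)$, one finds the value $\pi(\pi-\alpha_k)$ for $A$ and $-\pi(\pi-\alpha_k)$ for $B$. On the small clockwise circle around $t=i$, only $A$ contributes: $\arg(t-c_k)\to\arg(i-c_k)=\pi-\alpha_k$ and $\int_{\mathrm{cw}}d\arg(t-i)=-2\pi$, giving $-2\pi(\pi-\alpha_k)$.

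The delicate piece, and the main pitfall one must not overlook, is the large semi-circle $\{t=Re^{i\phi}:\phi\in[0,\pi]\}$: the integrand does \emph{not} decay in $R$. Using the expansions $\arg(t-a)=\phi+O(1/R)$ and $d\arg(t-b)=d\phi+O(1/R)$ valid for any fixed $a,b$, both $A$ and $B$ receive the same limiting contribution $\int_0^\pi\phi\,d\phi=\pi^2/2$, the $O(1/R)$ corrections vanishing in the limit. Summing the three pieces gives $A=\pi\alpha_k-\pi^2/2=B$, hence
\[
A+B=2\pi\alpha_k-\pi^2=\left(\frac{2k+1}{m_1}-1\right)\pi^2,
\]
as claimed. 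Dropping the contribution at infinity would shift the answer by exactly $\pi^2$, which is why the computation has to be done on $\mathbb{H}_R$ rather than directly on $\mathbb{H}$.

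For the quantum index of $\psi_k$, it then suffices to invoke the preceding monomial-map lemma. The map $\alpha(z,w)=(z^{m_1},z^{m_3+m_2}w^{m_3-m_2})$ satisfies $\psi_k=\alpha\circ\varphi_k$ and has determinant $m_1(m_3-m_2)$, so multiplying the log-area of $\varphi_k(\mathbb{H})$ by this determinant and dividing by $\pi^2$ yields
\[
k(\mathbb{H},\psi_k)=m_1(m_3-m_2)\cdot\frac{2k+1-m_1}{m_1}=(m_3-m_2)(2k+1-m_1),
\]
which is the formula stated.
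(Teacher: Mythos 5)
Your computation is correct and arrives at the paper's formula, but by a genuinely different route. The paper does not integrate on the half-plane at all: it draws the co-amoeba of $\varphi_k$ in the argument torus together with its order map (citing Forsg\aa rd's description of the shell), observes that the first coordinate $t-c_k$ gives a coordinate on the curve so that restricting to $\arg(t-c_k)\in(0,\pi)$ isolates the half co-amoeba corresponding to $\mathbb{H}$, and then reads off the signed area $l_k^2-(\pi-l_k)^2=2\pi l_k-\pi^2$ from the picture of blue and red triangles. Your proof instead pulls everything back to $\mathbb{H}$, uses the vanishing of $\frac{dz_1}{z_1}\wedge\frac{dz_2}{z_2}$ to convert each term $d\log|f|\wedge d\log|g|$ into $d\arg f\wedge d\arg g$, and evaluates the two resulting integrals by Stokes on $\mathbb{H}_R\setminus B_\varepsilon(i)$; I checked the boundary terms ($\pi(\pi-\alpha_k)$ and $-\pi(\pi-\alpha_k)$ on the real axis, $-2\pi(\pi-\alpha_k)$ on the small circle, and $\pi^2/2$ from each copy of the arc at infinity) and they sum correctly to $A=B=\pi\alpha_k-\pi^2/2$, hence $A+B=2\pi\alpha_k-\pi^2$. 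Your approach is self-contained and, as you rightly emphasize, its one real danger is the non-decaying contribution of the large semicircle, which the compact co-amoeba picture sidesteps entirely; conversely the paper's approach requires knowing the order map and justifying why one may keep only half of the argument torus, which you do not need. The deduction of $k(\mathbb{H},\psi_k)$ via the monomial map of determinant $m_1(m_3-m_2)$ is identical in both arguments. One tiny point of care: the replacement of $d\log|\cdot|$ by $d\arg$ must be done simultaneously in both factors of each wedge product (the mixed form $d\arg f\wedge d\log|g|$ is not equal to either), which is what your formulas for $A$ and $B$ in fact do.
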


\begin{figure}
\begin{center}
\definecolor{ffqqqq}{rgb}{1.,0.,0.}
\definecolor{qqqqff}{rgb}{0.,0.,1.}
\begin{tikzpicture}[line cap=round,line join=round,>=triangle 45,x=0.5cm,y=0.5cm]
\clip(-1.,-1.) rectangle (11.,11.);
\fill[color=qqqqff,fill=qqqqff,fill opacity=0.1] (0.,10.) -- (3.,10.) -- (3.,7.) -- cycle;
\fill[color=qqqqff,fill=qqqqff,fill opacity=0.1] (0.,0.) -- (3.,3.) -- (3.,0.) -- cycle;
\fill[color=ffqqqq,fill=ffqqqq,fill opacity=0.1] (3.,3.) -- (3.,7.) -- (5.,5.) -- cycle;
\fill[color=qqqqff,fill=qqqqff,fill opacity=0.1] (5.,5.) -- (7.,7.) -- (7.,3.) -- cycle;
\fill[color=ffqqqq,fill=ffqqqq,fill opacity=0.1] (7.,10.) -- (7.,7.) -- (10.,10.) -- cycle;
\fill[color=ffqqqq,fill=ffqqqq,fill opacity=0.1] (7.,3.) -- (7.,0.) -- (10.,0.) -- cycle;
\draw (0.,0.)-- (10.,0.);
\draw (10.,0.)-- (10.,10.);
\draw (10.,10.)-- (0.,10.);
\draw (0.,10.)-- (0.,0.);
\draw (0.,0.)-- (10.,10.);
\draw (0.,10.)-- (10.,0.);
\draw (7.,0.)-- (7.,10.);
\draw (3.,0.)-- (3.,10.);
\end{tikzpicture}

\caption{Co-amoeba of $\varphi_k$ with order map: $-1$ for red (triangles with left vertical side), $+1$ for blue (triangles with right vertical side).}
\label{coamoeba}
\end{center}
\end{figure}
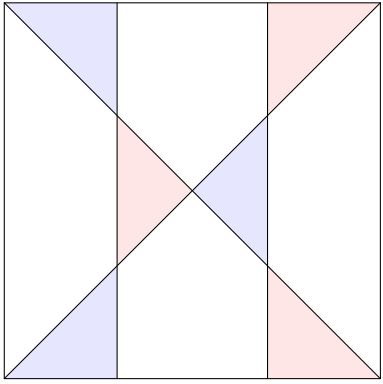

\begin{proof}
We compute the area of the coamoeba. According to \cite{forsgaard2015order}, the coamoeba with its order map is as on Figure \ref{coamoeba}. The order map has value $1$ on the blue triangles, and $-1$ on the red ones. The center point has coordinates $(0,0)$, the square has side length $2\pi$. The two vertical lines have respective abscissa $\pm\arg(i-c_k)=\pm\frac{2k+1}{2m_1}\pi$. This is the whole co-amoeba, which might fold itself, and we want to compute the area of half the co-amoeba, \textit{i.e.} the part corresponding to $S$. As $\arg(t-c_k)\in]0;\pi[$ if and only if $t\in \mathbb{H}$, we obtain $\arg\varphi_k(\mathbb{H})$ by restricting to the right half-square. Therefore, the area is obtained by taking the blue area minus the red area (because it comes with a minus sign). Let $l_k=\frac{2k+1}{2m_1}\pi$ the abscissa of the right vertical line. Then, we have
$$\mathcal{A}_{\arg} = l^2 - (\pi-l)^2 
	= 2\pi l -\pi^2 
	= \left( \frac{2k+1}{m_1}-1\right)\pi^2.
	$$
To get the quantum index of $\psi_k$, we multiply by the determinant of the monomial map, whose value is $m_1(m_3-m_2)$.
\end{proof}

\begin{rem}
It is easy to find the order map of the co-amoeba, which corresponds to the number of antecedents counted with sign. This function is constant on the complement of the \textit{shell}, which is a union of geodesics in the torus, directed by the vectors of $\Delta$. Moreover, the value of the order map changes by one when passing through one of the geodesic of the shell. This defines the order map up to a shift. The shift is fixed by the fact that the whole signed area is zero. One might then expect that adding the areas of some components of the complement of the shell would give the area of half the co-amoeba, \textit{i.e.} $\arg\varphi(\mathbb{H})$ instead of $\arg\varphi(\CC C)$. In general it is not the case: the order maps takes into account the antecedents on both connected components of $\CC C\backslash\RR C$, and it is not possible to draw them apart. However it is possible here since one of the monomials provides a coordinate on the curve, here $x$. Thus, we get $\arg\varphi(\mathbb{H})$ by restricting to half the argument torus, here the right-half square. 
\end{rem}

In particular, given two purely imaginary points on $\CC E_1$ and one real point on $\CC E_3$, we have proven that
\begin{itemize}[label=-]
\item If $m_1$ is odd, there exists precisely $m_1$ curves maximally tangent to the divisors and passing through the chosen points. Moreover, according to the two choices of orientation for each of them, there are two curves of each quantum index $(m_3-m_2)(2k+1-m_1)$, for $k\in[\![0;m_1-1]\!]$, \textit{i.e.} all the even multiples of $m_3-m_2$ of absolute value $<m_1$. This set is stable by one of the deck transformations. Thus, we get $2m_1$ oriented curves, two of each quantum index. If we also consider curves passing through the symmetric real point, we get $4m_1$ real oriented curves.
\item If $m_1$ is even, we might still be in the previous case (if $\frac{m_1}{\delta}$ is odd), or there might be $2m_1$ or $0$ solutions according to the sign of $\mu$ (when $\frac{m_1}{\delta}$ is even). Thus, there are $4m_1$ or $0$ oriented curves passing through the points.
\end{itemize}

The remaining cases of interest to compute quantum indices near the tropical limit are the case of a flat trivalent vertex, and a curve with one real puncture and two pairs of complex conjugated punctures. However, we do not need them in our computations.

\section{Tropical enumerative problem and refined curve counting}

	\label{tropical enumerative inv}

Let $\Delta\subset N$ be a family of $m$ primitive lattice vectors, with total sum $0$. As described in the introduction, there is an associated lattice polygon $P_\Delta$ having $m$ lattice points on its boundary. The toric surface obtained from $\Delta$ is denoted by $\CC\Delta$. Let $E_1,\dots, E_p$ denote the sides of the polygon $P_\Delta$ and let $n_1,\dots,n_p\in N$ be their normal primitive vectors. Let $s\leqslant\frac{l(E_1)}{2}$ be an integer, $r_1=l(E_1)-2s$, and $r_i=l(E_i)$ if $i\geqslant 2$, so that we have $\sum_1^p r_i +2s=m$. Let
$$\Delta(s)=(\Delta\backslash\{n_1^{2s}\})\cup\{(2n_1)^s\}=\{n_1^{r_1},(2n_1)^{s},n_2^{r_2},\dots,n_p^{r_p} \}.$$

\subsection{Tropical Problem}

The tropical curves of degree $\Delta(s)$ have $m-s$ unbounded ends, and therefore the moduli space $\mathcal{M}_0(\Delta(s), N_\RR)$ of parametrized rational tropical curves of degree $\Delta(s)$ in $N_\RR\simeq\RR^2$ has dimension $m-s-1$. We have the evaluation map:
$$\text{ev} : \mathcal{M}_0(\Delta(s), N_\RR)\longrightarrow \RR^{m-s-1},$$
that associates to a parametrized tropical curve the moments of every unbounded end but the first. Recall that the moment of the first unbounded end is equal to minus the sum of the other moments because of the tropical Menelaus theorem. Let $\mu\in\RR^{m-s-1}$ be a generic family of moments. We look for parametrized rational tropical curves $\Gamma$ of degree $\Delta(s)$ such that $\text{ev}(\Gamma)=\mu$.\\

Due to generality, as noticed in \cite{blomme2019caporaso}, every parametrized rational tropical curve $\Gamma$ such that $\text{ev}(\Gamma)=\mu$ is a simple nodal tropical curve and thus has a well-defined refined multiplicity. We then set
$$N_{\Delta(s)}^{\partial,\text{trop}}(\mu)=\sum_{\text{ev}(\Gamma)=\mu}m_\Gamma^q\in\ZZ[q^{\pm\frac{1}{2}}].$$

\begin{theo}\cite{blomme2019caporaso}
The value of $N_{\Delta(s)}^{\partial,\text{trop}}(\mu)$ is independent of $\mu$ provided that it is generic.
\end{theo}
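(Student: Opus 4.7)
The plan is to reduce the statement to a local wall-crossing analysis on the moduli space $\mathcal{M}_0(\Delta(s),N_\RR)$, following the strategy used by Itenberg--Mikhalkin for the case where all ends are primitive. First I would observe that $\mathcal{M}_0(\Delta(s),N_\RR)$ is a polyhedral complex of pure dimension $m-s-1$, the evaluation map $\text{ev}$ is linear on each cone $\text{Comb}(\Gamma)\times N_\RR$, and both source and target have the same dimension. For a generic choice of moments $\mu$, no preimage can lie on a proper face of a top-dimensional cone: if some bounded edge had length zero, the image $\text{ev}(\Gamma)$ would lie in the image of a lower-dimensional cone, which has measure zero in $\RR^{m-s-1}$. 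Similarly, a parametrized curve with a flat vertex would impose a linear relation among the $\mu_i$ that fails for generic $\mu$. Hence every contributing curve is a simple nodal tropical curve, its refined multiplicity $m^q_\Gamma$ is well defined, and $N^{\partial,\text{trop}}_{\Delta(s)}(\mu)$ is locally constant on the complement of a codimension-$1$ union of \emph{walls} cut out in $\RR^{m-s-1}$ by the evaluation images of the codimension-$1$ cones of $\mathcal{M}_0(\Delta(s),N_\RR)$.

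Next I would classify the walls. A generic wall corresponds to a parametrized tropical curve with exactly one 4-valent vertex $V$ (all other vertices trivalent and non-flat), obtained by contracting a bounded edge in two adjacent top-dimensional chambers. Let the four outgoing slopes at $V$ be $u_1,u_2,u_3,u_4\in N$ with $u_1+u_2+u_3+u_4=0$. When the wall is crossed, the 4-valent vertex splits into two trivalent vertices joined by a short bounded edge, with three possible pairings $\{u_1,u_2\}\!\mid\!\{u_3,u_4\}$, $\{u_1,u_3\}\!\mid\!\{u_2,u_4\}$, $\{u_1,u_4\}\!\mid\!\{u_2,u_3\}$. Not every pairing gives a curve on each side of the wall: a pairing $\{u_i,u_j\}$ is realized on the side where $\omega(u_i+u_j,\,\text{direction of crossing})$ has the right sign. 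The standard wall-crossing computation shows that when one tallies the combinatorial types appearing on the two sides, the required identity for invariance reduces to the single local equality
\[
[\,|\omega(u_1,u_2)|\,]_q\,[\,|\omega(u_1+u_2,u_3)|\,]_q \;=\; [\,|\omega(u_1,u_3)|\,]_q\,[\,|\omega(u_1+u_3,u_2)|\,]_q,
\]
with the third pairing accounted for analogously, together with the quadrilateral case where all three pairings appear as resolutions of a nodal parallelogram whose contributions cancel identically.

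The central algebraic fact is therefore the refined Mikhalkin identity: for any four vectors in $N$ summing to zero, the $q$-multiplicities of the three resolutions satisfy the balancing relation above. This is precisely the content of the lemma proved in \cite{itenberg2013block}, which only uses the antisymmetric bilinear form $\omega$ on $N$ and is \emph{insensitive to whether the slopes $u_i$ are primitive}. Since the degree $\Delta(s)$ differs from a primitive degree only by the presence of some ends of slope $2n_1$, and since the wall-crossing analysis is entirely internal to the tropical curve (it concerns 4-valent vertices, not the ends), the refined identity applies verbatim. I would finish by checking that all walls encountered when moving between two generic $\mu,\mu'$ are of the generic type described, which follows from choosing a generic path in $\RR^{m-s-1}$ so that only codimension-$1$ strata of the discriminant are crossed transversely.

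The main obstacle I anticipate is bookkeeping at the walls: one must make sure that when writing the two resolutions of a given 4-valent vertex on either side of the wall as products $m^q_{\Gamma}=\prod_V [m_V^\CC]_q$, the remaining vertices of $\Gamma$ are unaffected by the crossing, so that the local identity does suffice to prove global invariance. A secondary subtlety is the presence of non-primitive ends of slope $2n_1$: one has to check that these ends do not create any new type of wall (for instance, the combinatorial type does not degenerate by merging two parallel ends of slope $2n_1$ into a higher-weight edge outside the enumerative dimension count), which is guaranteed by the condition that $\mu$ is generic, as such a degeneration is of higher codimension.
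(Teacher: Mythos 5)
A preliminary remark: the paper does not prove this statement itself --- it is imported from \cite{blomme2019caporaso} --- so your proposal can only be compared with the standard argument given there. Your overall strategy is the correct one and essentially matches it: for generic $\mu$ the preimage $\text{ev}^{-1}(\mu)$ avoids all cones of positive codimension and all flat vertices, the count is locally constant off the images of the codimension-one combinatorial types (one non-flat four-valent vertex, everything else trivalent), and the local identity governing the crossing involves only $\omega$ and is indeed insensitive to the primitivity of the slopes, so the ends of slope $2n_1$ cause no extra difficulty.

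However, the local identity you wrote down is false, and it is exactly the step where the content of the theorem lives. You assert
$$[\,|\omega(u_1,u_2)|\,]_q\,[\,|\omega(u_1+u_2,u_3)|\,]_q=[\,|\omega(u_1,u_3)|\,]_q\,[\,|\omega(u_1+u_3,u_2)|\,]_q,$$
which says that two of the three resolutions of the four-valent vertex have \emph{equal} refined multiplicity. Take $u_1=(1,0)$, $u_2=(0,1)$, $u_3=(-2,-3)$, $u_4=(1,2)$: the three resolutions have multiplicities $[1]_q[1]_q=1$, $[3]_q[1]_q=q+1+q^{-1}$ and $[2]_q[2]_q=q+2+q^{-1}$, which are pairwise distinct. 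What \cite{itenberg2013block} actually proves is the \emph{additive} three-term relation: writing $a_{ij}=\omega(u_i,u_j)$, for a suitable labeling of the indices dictated by the signs of the $a_{ij}$ one has $[\,|a_{12}|\,]_q[\,|a_{34}|\,]_q+[\,|a_{13}|\,]_q[\,|a_{24}|\,]_q=[\,|a_{14}|\,]_q[\,|a_{23}|\,]_q$ (in the example above, $1+[3]_q=[2]_q^2$); your parallelogram case is just the degenerate instance $a_{ij}=0$ of the same relation. Consequently, invariance does \emph{not} follow resolution by resolution: one must show that the resolutions realized in the two chambers adjacent to a wall are grouped precisely as the two summands on one side and the single product on the other. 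That sign bookkeeping --- which in the moment problem is carried out by tracking how the positions of the subtrees hanging off the four-valent vertex, each pinned by the moments of the ends it carries, move relative to one another as $\mu$ crosses the wall --- is the part you dismiss with ``has the right sign,'' and without it the argument does not close. As written, your proof would entitle you to conclude that each individual resolution contributes equally on both sides of the wall, which is simply not what happens.
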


\begin{rem}
These tropical considerations are easy to generalize for curves of any degree, in particular with non-primitive vectors. This tropical part thus also applies for the problem with complex imaginary points on any divisor. However, we restrict ourselves to degrees with primitive vectors for any toric divisor but one, 
due to a temporary lack of correspondence theorem in the more general setting.
\end{rem} 

\subsection{Classical problem}
\label{classical problem}

Keeping previous notations, let $\mathcal{P}$ be a configuration of $2m-s$ points on the toric boundary $\partial\CC\Delta$ such that:
\begin{itemize}[label=-]
\item $\mathcal{P}$ has $s$ pairs of conjugated purely imaginary points on $\CC E_1$, and $r_1$ pairs of opposite real points on $\RR E_1$,
\item for each $i\geqslant 2$, the configuration $\mathcal{P}$ has $r_i$ pairs of opposite real points on $\RR E_i$,
\item $\mathcal{P}$ is subject to the Menelaus condition. 
\item we assume that $\mathcal{P}$ is generic among such configurations.
\end{itemize} 

The fact that $\mathcal{P}$ is subject to the Menelaus condition is a slight misuse since there are not the right number of points. This is due to the fact that we consider curves passing only through one point of each pair of real opposite points. Here, \textit{Menelaus condition} means the following.

\begin{defi}
Let $\mathcal{P}$ be as above. For each pair of opposite real points $\{\pm p_j\}$, let $\pm\mu_j$ be their complex moments, and for each pair of opposite purely imaginary points $\{\pm p_j\}$, let $\pm i\lambda_j$ be their moments. The symmetric configuration $\mathcal{P}$ is said to satisfy the \textit{Menelaus condition} if
$$\prod_j |\mu_j| \prod_j \lambda_j^2=1.$$
\end{defi}

\begin{rem}
This means that the Menelaus condition is satisfied up to sign when we take both points of each non-real pair, and one point of each real pair.
\end{rem}

Let $\mathcal{S}(\mathcal{P})$ be the set of oriented real rational curves that pass through at least one point of each pair. Such a curve is said to \textit{pass through the symmetric configuration} $\mathcal{P}$. As the curves are oriented, each real curve is counted twice: once with each of its orientations. Notice that if a curve passes through one of the points of a pair of non-real points, it contains both since the curve is real. Every oriented curve of $\mathcal{S}(\mathcal{P})$ has real or purely imaginary intersection points, and thus a well-defined quantum index. We denote by $\mathcal{S}_k(\mathcal{P})$ the subset of $\mathcal{S}(\mathcal{P})$ formed by oriented curves with quantum index $k$.\\

Let $\varphi:\CC P^1\rightarrow\CC\Delta$ be an oriented real parametrized rational curve, denoted by $\overrightarrow{C}$. The logarithmic Gauss map sends a point $p\in\RR P^1$ to the tangent direction to $\text{Log}\varphi(\RR P^1)$ inside $N_\RR$. We get a map
$$\gamma:\RR P^1\rightarrow \PP^1(N_\RR).$$
The first space $\RR P^1$ is oriented since the curve is oriented, while $\PP^1(N_\RR)$ is oriented by $\omega$. The degree of this map is denoted by $\text{Rot}_\text{Log}(\overrightarrow{C})\in\ZZ$. If the curve has transverse intersections with the divisors, it has the same parity as the number of boundary points $m$. We then set
$$\sigma(\overrightarrow{C})=(-1)^\frac{m-\text{Rot}_\text{Log}(\overrightarrow{C})}{2}\in\{\pm 1\}.$$
Now let
$$R_{\Delta,k}(\mathcal{P)}=\sum_{\overrightarrow{C}\in\mathcal{S}_k(\mathcal{P})}\sigma(\overrightarrow{C}),$$
and
$$R_{\Delta}(\mathcal{P)}=\frac{1}{4}\sum_{k}R_{\Delta,k}(\mathcal{P)}q^k\in\ZZ [q^{\pm\frac{1}{2}}].$$
The coefficient $\frac{1}{4}$ is here to account for the deck transformation: if $\{f(x,y)=0\}$ is a curve in $\mathcal{S}(\mathcal{P})$, then $\{f(x,-y)=0\}$, $\{f(-x,y)=0\}$, $\{f(-x,-y)=0\}$ are in $\mathcal{S}(\mathcal{P})$ too.

\begin{theo}\cite{mikhalkin2017quantum}
The value of $R_{\Delta}(\mathcal{P)}$ is independent of the configuration $\mathcal{P}$ as long as it is generic.
\end{theo}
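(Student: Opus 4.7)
My plan is to establish that $R_\Delta(\mathcal{P})$ is locally constant on the open dense stratum of generic symmetric configurations with fixed combinatorial data $(r_1,\ldots,r_p;\lambda_1,\ldots,\lambda_p)$ satisfying the Menelaus condition. Since this stratum is connected, local constancy yields the invariance. Concretely, I would fix two generic configurations $\mathcal{P}_0,\mathcal{P}_1$ and join them by a generic real-analytic path $(\mathcal{P}_t)_{t\in[0,1]}$ inside this stratum, arranging by transversality that only finitely many critical values $t_1<\cdots<t_N$ carry codimension-one degenerations of the incidence variety $\mathcal{S}(\mathcal{P}_t)$.

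On each open subinterval between consecutive $t_j$, each oriented curve $\overrightarrow{C}_t\in\mathcal{S}(\mathcal{P}_t)$ varies in a smooth one-parameter family. The quantum index $k(\overrightarrow{C}_t)$ is half-integer valued and depends continuously on $t$ (via the log-area), hence is constant; the number $\text{Rot}_{\text{Log}}(\overrightarrow{C}_t)\in\mathbb{Z}$ is likewise locally constant, so the sign $\sigma(\overrightarrow{C}_t)$ is constant. Therefore $R_\Delta(\mathcal{P}_t)$ is constant on each such open subinterval, and the whole problem reduces to analyzing the wall-crossing at each $t=t_j$.

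The next step is to classify the codimension-one walls. By genericity, only three types can occur: (W1) two oriented real curves $\overrightarrow{C}_+,\overrightarrow{C}_-\in\mathcal{S}(\mathcal{P}_t)$ coalesce at $t_j$ and continue as a complex-conjugate pair, thereby leaving $\mathcal{S}$; (W2) the time-reverse of (W1); and (W3) a curve in $\mathcal{S}(\mathcal{P}_t)$ acquires a node (real or a pair of conjugate nodes) without changing the count. Type (W3) preserves the enumeration tautologically, as the count is already defined after desingularization. The whole content of the theorem lies in (W1)/(W2). Here the two colliding real curves share the same image, hence the same log-area, so the same quantum index $k$; the issue is to prove the sign cancellation $\sigma(\overrightarrow{C}_+)=-\sigma(\overrightarrow{C}_-)$. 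Locally near the collision, the family of real parametrizations is a real analytic deformation of a nodal curve; the logarithmic Gauss map on the two real smoothings gains, respectively loses, a pair of critical points with the same local orientation contribution, so $\text{Rot}_\text{Log}$ changes by $\pm 2$ between $\overrightarrow{C}_+$ and $\overrightarrow{C}_-$, flipping the parity of $\frac{m-\text{Rot}_\text{Log}}{2}$ and hence $\sigma$. The two curves therefore cancel in $R_{\Delta,k}$, and $R_\Delta(\mathcal{P}_t)$ is unchanged across the wall.

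The main obstacle is the explicit local analysis at a (W1)/(W2) wall: one must write down a real normal form for the one-parameter family of rational curves through a conjugation-invariant set of moments as they cross the real-complex transition, and verify that the logarithmic Gauss degree jumps by exactly $\pm 2$ between the two real smoothings. I would reduce this to a local model near the singular real curve by parametrizing $\mathcal{S}(\mathcal{P}_t)$ near $(t_j,\overrightarrow{C}_*)$, using the implicit function theorem on the moment map, and then computing $\text{Rot}_\text{Log}$ chartwise. Secondary subtleties to address are: checking that the Menelaus constraint (a single real relation among moments) does not introduce extra unaccounted walls beyond (W1)--(W3); verifying that the $\tfrac14$ normalization in the definition of $R_\Delta(\mathcal{P})$ correctly absorbs the free action of $(\pm 1,\pm 1)$ on $\mathcal{S}(\mathcal{P})$; and making sure the purely imaginary pairs of points, whose conjugates coincide with their opposites, do not create additional strata. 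Once these are handled, the invariance follows from the local constancy together with the wall-crossing cancellation.
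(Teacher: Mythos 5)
This theorem is quoted from \cite{mikhalkin2017quantum}; the paper you are reading does not prove it (the remark following it explicitly defers to Mikhalkin), so there is no in-paper argument to compare against. Your strategy --- connect two generic configurations by a path, show local constancy away from finitely many walls, classify the codimension-one walls, and check cancellation at each --- is indeed the strategy of Mikhalkin's proof. The local constancy on open intervals (half-integrality of $k$ plus integrality of $\text{Rot}_\text{Log}$) is fine, and your analysis of the real-pair-to-conjugate-pair wall, while only sketched, identifies the correct mechanism: the two merging curves have equal quantum index and $\text{Rot}_\text{Log}$ differing by $2$, hence opposite $\sigma$.

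The genuine gap is your claim that ``by genericity, only three types can occur.'' This is asserted, not proved, and it is false as stated: the wall classification is the substantive content of the theorem, and your list omits precisely the walls that make this problem delicate. First, along a one-parameter family an intersection point of the curve with a toric divisor can collide with a torus fixed point of $\CC\Delta$ (a corner of the polygon); after such a wall the curve typically reappears passing through $-p_i$ instead of $p_i$. This is exactly why the invariant is defined using the symmetric configuration $\mathcal{P}$ rather than $\mathcal{P}_0$ --- the paper's own remark warns that ``otherwise the invariance might fail'' --- and at such a wall both the quantum index $k$ and $\text{Rot}_\text{Log}$ of the single continuing curve can jump, so one must check that the contributions to $\sum_k R_{\Delta,k}q^k$ match up rather than cancel. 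You relegate the $\pm$ symmetry to a ``secondary subtlety,'' but it is the main point. Second, you omit reducible degenerations (the source curve breaking) and cuspidal degenerations (the parametrization failing to be an immersion at a real point), at which $\text{Rot}_\text{Log}$ of a single surviving real curve changes by $\pm 2$; these are codimension one in the space of maps and must be shown either not to occur over a generic path or to preserve the signed count. Your (W3), by contrast, is not really a wall at all: an immersed rational curve acquiring an extra node of its image is not a codimension-one degeneration of the parametrized problem. Until the wall classification is actually established and the corner and reducible/cuspidal walls are analyzed, the proof is incomplete.
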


The obtained polynomial, independent of $\mathcal{P}$, is denoted by $R_{\Delta,s}$.

\begin{rem}
This is in fact a specialization of the theorem of \cite{mikhalkin2017quantum} presented in the introduction, with pairs of purely imaginary points located on a common divisor. That is why we adopt the notation $R_{\Delta,s}$ rather than $R_{\Delta,(s,0,\dots,0)}$.
\end{rem}

\section{Realization and correspondence theorem in the real case}

In this section we prove a correspondence theorem, by refining the realization theorem of Tyomkin \cite{tyomkin2017enumeration} in the case of real curves.
The proof follows the same steps as in \cite{tyomkin2017enumeration} and presents similar calculations. We start by giving a bunch of notations which might seem a little heavy, but are useful to deal with real tropical curves having a non-trivial real structure.\\ 

	\subsection{Notations}

Let $\Gamma$ be a real rational abstract  tropical curve with $m$ ends. We denote by $\sigma$ the involution on $\Gamma$, and $\Gamma/\sigma$ the quotient graph. When needed, we denote by $\pi:\Gamma\rightarrow\Gamma/\sigma$ the quotient map. 
Let $I$ denote the set of ends of $\Gamma$, endowed with an action of the involution, embodied in the following decomposition:
$$I=\{ x_1,\dots,x_r,z_1^\pm,\dots,z_s^\pm\},$$
where the ends $x_i$ are fixed ends (real ends) and $z_i^\pm$ are exchanged with one another (complex ends). Following this notation, we denote the set of ends of $\Gamma/\sigma$ by:
$$I/\sigma=\{ x_1,\dots,x_r,z_1,\dots,z_s\}.$$
We assume that $r\geqslant 1$, and orient the edges of both $\Gamma$ and $\Gamma/\sigma$ away from $x_r$, which makes them rooted trees. This orientation induces a partial order $\prec$ on the curve. For $w$ and $w'$ vertices or ends, we have $w\prec w'$ if and only if the shortest path from $w$ to $w'$ agrees with the orientation of the graph. We endow the set $I/\sigma$ with a total order, different from $\prec$, for which $x_r$ is the smallest element.\\

If $w\in\Gamma^0$ is a vertex of $\Gamma$, let $I_w^\infty$ be the set of ends of $\Gamma$ which are greater than $w$ for the order $\prec$.We take a similar notation $(I/\sigma)^\infty_w$ for $w\in(\Gamma/\sigma)^0$. Notice that if $w\in\fix$, then $I^\infty_w$ is stable by $\sigma$, and if $w\notin\fix$, then at most one element of each pair $\{z_j^\pm\}$ belongs to $I^\infty_w$.\\

If $\gamma\in\Gamma^1$ is a bounded edge of $\gamma$ (same for $\Gamma/\sigma$), let $\tg$ and $\hg$ be the tail and the head of $\gamma$. Notice that $\hg\notin\fix$ if and only if $\gamma\notin\fix$.\\

If $\gs\in(\Gamma/\sigma)^1$ is an edge of $\Gamma/\sigma$, let $\iota(\gamma^\sigma)$ be the smallest element of $(I/\sigma)^\infty_\hgs$. It is the smallest end among those accessible by $\hgs$. The order on $I/\sigma$ along with this map $\iota$ induces an order on the edges of $\Gamma/\sigma$ having the same tail. We thus can speak about the smallest and biggest edge leaving a vertex $\pi(w)$ of $\Gamma/\sigma$. We can then lift these local orders on $\Gamma/\sigma$ to $\Gamma$:
	\begin{itemize}[label=-]
	\item If $w\in\fix$, then we have three cases for the lift of an edge $\gs$ such that $\tgs=\pi(w)$:
		\begin{itemize}[label=$\star$]
		\item $(RR)$ the edge $\gs=\{\gamma\}$ lifts to a fixed edge $\gamma$ of $\Gamma$, and $\iota(\gs)=\{x_j\}$ is a real marking. (The $(RR)$ stands for "Real edge Real marking".) We then set $\iota(\gamma)=x_j$.
		\item $(RC)$ the edge $\gs=\{\gamma\}$ lifts to a fixed edge $\gamma$ of $\Gamma$ but $\iota(\gs)=\{z_j^\pm\}$ is a complex marking. It means that the curve $\Gamma$ splits at some point on the path from $w$ to $\iota(\gs)$, but not right away in $\gamma$. (The $(RC)$ stands for "Real edge Complex marking".)
		\item $(CC)$ the edge $\gs=\{\gamma^+,\gamma^-\}$ lifts to a pair of exchanged edges in $\Gamma$. If $\iota(\gs)=\{z_j^\pm\}$, we have up to a relabeling of $\gamma^\pm$ that $z_j^+$ (resp. $z_j^-$) is accessible via $\gamma^+$ (resp. $\gamma^-$). (The $(CC)$ stands for "Complex edge Complex marking".)
		\end{itemize}
	\item If $w\notin\fix$, then $\pi:\{\gamma\in\Gamma^1:\tg=w\}\rightarrow\{\gs:\tgs=\pi(w)\}$ is a bijection, therefore we have a total order on $\{\gamma\in\Gamma^1:\tg=w\}$, and for every edge $\gamma$ such that $\tg=w$ we have a unique complex end $\iota(\gamma)\in\iota(\gs)$ accessible by $w$. Notice that in this case, every edge $\gs$ such that $\tgs=\pi(w)$ is of type $(CC)$. Moreover, we also have an induced order between the edges emanating from $w$.
	\end{itemize}
	
	We denote by $v_\RR$ (resp. $v_\CC$) the number of fixed vertices (resp. pairs of exchanged vertices), and by $e_\RR$ (resp. $e_\CC$) the number of fixed bounded edges (resp. pairs of exchanged bounded edges).

	\subsection{Space of rational curves with given tropicalization} 

Let $\Gamma$ be an abstract tropical curve with $m$ ends. Let $(C^{(t)},x_1,\dots,x_r,z_1^\pm,\dots,z_s^\pm)$ be a real smooth rational curve with a real configuration $(\textbf{x},\textbf{z}^\pm)=(x_1,\dots,x_r,z_1^\pm,\dots,z_s^\pm)$ of marked points, tropicalizing on $\Gamma$. We assume that $C^{(0)}$ is the special fiber of the stable model of $C^{(t)}$, so that $\Gamma$ is the dual graph of $C^{(0)}$. We assume that $r\geqslant 1$ and we also denote by $\sigma$ the real structure on $C^{(t)}$.\\

\begin{rem}
By taking a coordinate, the marked curve $(C^{(t)},\textbf{x},\textbf{z}^\pm)$ can be seen as $\PP^1\left(\CC((t))\right)\simeq\CC((t))\cup\{\infty\}$, the projective line over the field of Laurent series, along with $r+2s$ Laurent series which are the marked points, taken up to a change of coordinate in $GL_2\big(\RR((t))\big)$. The first $r$ Laurent series are in $\RR((t))$, and the last $s$ are taken in $\CC((t))\backslash\RR((t))$ along with their conjugate.
\end{rem}

We associate to each vertex $w\in\Gamma^0$ a coordinate $y_w$ on $C^{(t)}$, taking into account the real structure, \textit{i.e.} the coordinate $y_w$ is real if $w\in\fix$ and $y_{\sigma(w)}=\overline{y_w\circ\sigma}$ otherwise. Moreover, the coordinate $y_w$ specializes to a coordinate on the irreducible component of $C^{(0)}$ associated to $w$.

\begin{itemize}[label=-]
\item If $w\in\fix$, the set $I^\infty_w$ is stable by $\sigma$, but has no order induced by $I/\sigma$ since for each complex marking, both are accessible. Still, let $\gamma^\sigma_a$ and $\gamma^\sigma_b$ be the smallest and biggest edges emanating from $\pi(w)$ in $\Gamma/\sigma$. We make a disjunction according to the type $(RR),(RC),(CC)$ of each edge:
	\begin{itemize}[label=$\star$]
	\item[$(RR/RR)$] If $\gamma^\sigma_a$ and $\gamma^\sigma_b$ are both of type $(RR)$, they lift to edges $\gamma_a$ and $\gamma_b$ of $\Gamma$, which have well-defined real marking $x_a=\iota(\gamma_a)$ and $x_b=\iota(\gamma_b)$. Then we take $y_w$ such that $y_w(x_r)=\infty$, $y_w(x_a)=0$, $y_w(x_b)=1$. This is a real coordinate since $y_w$ coincide with $\overline{y_w\circ\sigma}$ at three points.
	\item[$(RR/RC)$] If $\gamma^\sigma_a$ is of type $(RR)$ and $\gamma^\sigma_b$ of type $(RC)$, then they lift up to edges $\gamma_a,\gamma_b\in\fix$, and we have $\iota(\gamma_a)=x_a$, and $\iota(\gamma_b)=z_b^\pm$. Then $\re z_b^\pm$ is a well-defined real Laurent series whose specialization on the component associated to $w$ is different from the one of $x_a$. Then we can take $y_w$ such that $y_w(x_r)=\infty$, $y_w(x_a)=0$, $y_w(\re z_b^\pm)=1$.
	\item[$(RC/RR)$] We do the same with $\re z_a^\pm$ and $x_b$.
	\item[$(RC/RC)$] If $\gamma^\sigma_a$ and $\gamma^\sigma_b$ are both of type $(RC)$ we do the same with $\re z_a^\pm$ and $\re z_b^\pm$.
	\item[$(CC/-)$] If $\gamma^\sigma_a$ is of type $(CC)$, then $\gamma^\sigma_a$ lifts to a pair of exchanged edges $\{\gamma_a^\pm\}$ both emanating from $w$. They both have a well-defined $\iota(\gamma_a^\pm)=z_a^\pm$. Then we take $y_w$ such that $y_w(x_r)=\infty$, $y_w(z_a^\pm)=\pm i$, which also is a real coordinate.
	\item[$(-/CC)$] If $\gamma^\sigma_a$ is of type $(RR)$ or $(RC)$ and $\gamma^\sigma_b$ is of type $(CC)$, we do the same with $z_b^\pm$.
	\end{itemize}
\item If $w\notin\fix$, then $I_w^\infty$ consists only of complex markings, all edges emanating from $w$ are of type $(CC)$ and we have a well-defined $\iota(\gamma)$ for each of them. Let $a$ and $b$ be the smallest and biggest elements in $I_w^\infty$. We take $y_w$ such that $y_w(x_r)=\infty$, $y_w(a)=0$, $y_w(b)=1$. This choice ensures that $y_{\sigma(w)}=\overline{y_w\circ\sigma}$.
\end{itemize}

The functions $y_w$ are all coordinates on $C$ sending $x_r$ to $\infty$, therefore we can pass from one to another by a real affine function which we now describe.

\begin{prop}
Let $\gamma\in\Gamma^1$ be a bounded edge.
\begin{itemize}[label=-]
\item If $\gamma\notin\fix$, let $z_a^\varepsilon$ and $z_b^\eta$ be the smallest and biggest elements in $I^\infty_\hg$, then
	$$y_\hg=\frac{y_\tg - y_\tg(z_a^\varepsilon)}{y_\tg(z_b^\eta) - y_\tg(z_a^\varepsilon)} \text{ and }|\gamma|=\val(y_\tg(z_b^\eta) - y_\tg(z_a^\varepsilon)).$$
\item If $\gamma\in\fix$, we make a disjunction according to the type of $\hg$:
	\begin{itemize}[label=$\star$]
	\item[$(RR/RR)$] $y_\hg=\frac{y_\tg - y_\tg(x_a)}{y_\tg(x_b) - y_\tg(x_a)} \text{ and }|\gamma|=\val(y_\tg(x_b) - y_\tg(x_a)).$
	\item[$(RR/RC)$] $y_\hg=\frac{y_\tg - y_\tg(x_a)}{\re y_\tg(z_b^\pm) - y_\tg(x_a)} \text{ and }|\gamma|=\val(\re y_\tg(z_b^\pm) - y_\tg(x_a)).$
	\item[$(RC/RR)$] $y_\hg=\frac{y_\tg - \re y_\tg(x_a)}{y_\tg(x_b) - \re y_\tg(z_a^\pm)} \text{ and }|\gamma|=\val(y_\tg(x_b) - \re y_\tg(z_a^\pm)).$
	\item[$(RC/RC)$] $y_\hg=\frac{y_\tg - \re y_\tg(z_a^\pm)}{\re y_\tg(z_b^\pm) - \re y_\tg(z_a^\pm)} \text{ and }|\gamma|=\val(\re y_\tg(z_b^\pm) - \re y_\tg(z_a^\pm)).$
	\item[$(CC/-)$] $y_\hg=\frac{y_\tg-\re y_\tg(z_a^\pm)}{\im y_\tg(z_a^+)} \text{ and } |\gamma|=\val\left(\im y_\tg(z_a^+)\right).$
	\item[$(-/CC)$] same with $a$ switched by $b$.
	\end{itemize}
\end{itemize}
\end{prop}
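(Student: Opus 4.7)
The plan is to observe that both $y_\tg$ and $y_\hg$ are Möbius coordinates on $C^{(t)} \simeq \PP^1(\CC((t)))$ sending $x_r$ to $\infty$, so the change of coordinate is necessarily affine: there exist $a \in \CC((t))^*$ and $b \in \CC((t))$ with $y_\hg = a y_\tg + b$. When $\gamma \in \fix$ both coordinates are $\sigma$-invariant and hence real, forcing $a, b \in \RR((t))$.

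To pin down $a$ and $b$ in each case I would impose the two remaining normalization conditions defining $y_\hg$. In the $(RR/RR)$ case, $y_\hg(x_a) = 0$ and $y_\hg(x_b) = 1$ solve to give the claimed formula via a $2 \times 2$ linear system. The mixed cases $(RR/RC)$, $(RC/RR)$ and $(RC/RC)$ are identical after substituting the real series $\re y_\tg(z_\bullet^\pm) = \tfrac{1}{2}(y_\tg(z_\bullet^+) + y_\tg(z_\bullet^-)) \in \RR((t))$ for the corresponding complex marking. In the $(CC/-)$ case, imposing $y_\hg(z_a^\pm) = \pm i$ together with the $\sigma$-equivariance $y_\tg(z_a^-) = \overline{y_\tg(z_a^+)}$ forces the translation by $\re y_\tg(z_a^\pm)$ and the rescaling by $\im y_\tg(z_a^+)$ exactly as stated; the case $(-/CC)$ is symmetric. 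The non-fixed case is handled in the same way, using the order inherited on $I_\hg^\infty$ from $I/\sigma$.

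The length formula in each sub-case is then an instance of the following general tropical principle: if $y$ and $y'$ are coordinates on $C^{(t)}$ specializing to coordinates on adjacent components $C_\tg$ and $C_\hg$ of the stable model, and $y' = a y + b$, then $\val(a) = -|\gamma|$. To see this, pick \'etale coordinates $u,v$ near the node of $C^{(0)}$ dual to $\gamma$ satisfying $uv = t^{|\gamma|}$; expanding $y = p_0 + cu + \cdots$ near the node on $C_\tg$, substituting $u = t^{|\gamma|}/v$, and matching with the expansion $y' = q_0 + c'v + \cdots$ near the node on $C_\hg$ forces $\val(a) = -|\gamma|$. Applying $|\gamma| = \val(1/a)$ to each explicit $a$ computed above yields the stated expression for $|\gamma|$ in each of the seven cases.

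The main obstacle, in the mixed $(RC)$ and $(CC)$ sub-cases, will be checking that the valuations of $\re y_\tg(z^\pm)$ and $\im y_\tg(z^+)$ appearing in the formulas are genuinely equal to $|\gamma|$ and not strictly larger. This comes down to the geometric fact that a pair of conjugate markings $z_\bullet^\pm$ reachable from $\tg$ through a fixed edge specialize to the same point of $C_\tg$, so that $y_\tg(z_\bullet^+) - y_\tg(z_\bullet^-)$ has strictly positive valuation and in fact captures the first-order deformation encoding the depth of the node $\gamma$. The $\sigma$-equivariance of $y_\tg$ makes this bookkeeping straightforward but unavoidable, and once completed it closes every case at once.
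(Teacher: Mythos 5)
Your proposal is correct and follows essentially the same route as the paper: identify the affine change of coordinates by matching the two Möbius coordinates at the three normalization points, then read off $|\gamma|$ as the valuation of the scaling factor from the definition of edge length via the local model $uv=t^{|\gamma|}$ of the node. The paper's proof is just a terser version of this (it states that the right-hand side agrees with $y_{\mathfrak{h}(\gamma)}$ at the three defining points and that the valuation identity is the definition of the length); the concern you raise in your last paragraph is resolved exactly as you suggest, since $y_{\mathfrak{h}(\gamma)}$ specializes to a genuine coordinate on $C_{\mathfrak{h}(\gamma)}$ by construction, which forces the scaling factor to have valuation exactly $|\gamma|$.
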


\begin{proof}
In each case we check that the right-hand term, which is a coordinate since it is obtained by an affine change from another coordinate, coincides with $y_\hg$ at the three points used to define it. As it is so, they are equal. The equality with the length of $\gamma$ is the definition of the latter since $y_\hg$ and $y_\tg$ are coordinates on the irreducible component associated with $\hg$ and $\tg$.
\end{proof}

For every edge we now define $\alpha_\gamma\in\CC[[t]]^\times$ and $\beta_\gamma\in\CC[[t]]$ which will be the coordinates on the space of real marked curves tropicalizing on $\Gamma$. Once again, the  definition goes through the distinction of the type of edges emanating from $\hg$. Let $\gamma\in\Gamma^1_b$ be a bounded edge:
	\begin{align*}
	(RR/RR)\ & \alpha_\gamma=t^{-|\gamma|}\left(y_\tg(x_b) - y_\tg(x_a)\right), \\
	(RR/RC)\  & \alpha_\gamma=t^{-|\gamma|}\left(\re y_\tg(z_b^\pm) - y_\tg(x_a)\right), \\
	(RC/RR)\  & \alpha_\gamma=t^{-|\gamma|}\left(y_\tg(x_b) - \re y_\tg(z_a^\pm)\right), \\
	(RC/RC)\  & \alpha_\gamma=t^{-|\gamma|}\left(\re y_\tg(z_b^\pm) - \re y_\tg(z_a^\pm)\right), \\
	(CC/-)\  & \alpha_\gamma=t^{-|\gamma|}\im y_\tg(z_a^+), \\
	(-/CC)\  & \alpha_\gamma=t^{-|\gamma|}\im y_\tg(z_b^+). \\
	\end{align*}
	Let $\gamma\in\Gamma^1$ be a non-necessarily bounded edge:
	\begin{itemize}[label=$\star$]
	\item If $\gamma\notin\fix$ is of type $(CC)$ then $\beta_\gamma=y_\tg(z_{\iota(\gamma)}^\varepsilon)$ where $z_{\iota(\gamma)}^\varepsilon$ is the lift of $z_{\iota(\gamma)}$ accessible by $\gamma$.
	\item If $\gamma\in\fix$ is of type $(RR)$ then $\beta_\gamma=y_\tg(x_{\iota(\gamma)})$.
	\item If $\gamma\in\fix$ is of type $(RC)$ then $\beta_\gamma=\re y_\tg(z_{\iota(\gamma)}^\pm)$.
	\end{itemize}
	
	We now can define the function
	$$\Psi_\gamma(y)=\beta_\gamma + t^{|\gamma|}\alpha_\gamma y,$$
	which allows an easy description of the relations between the $y_w$.
	
	\begin{prop}\label{propcalcul}
	The Laurent series $\alpha_\gamma$ and $\beta_\gamma$ satisfy the following properties.
	\begin{enumerate}[label=(\roman*)]
	\item If $\gamma$ is an edge, one has $\alpha_\gamma\in\CC[[t]]^\times$. Moreover, if $\gamma\neq\gamma'$ are two different edges with the same tail $\tg=\mathfrak{t}(\gamma')$, then $\beta_\gamma-\beta_{\gamma'}\in\CC[[t]]^\times$.
	\item They are real: $\alpha_{\sigma(\gamma)}=\overline{\alpha_\gamma}$, $\beta_{\sigma(\gamma)}=\overline{\beta_\gamma}$. In particular $\alpha_\gamma,\beta_\gamma\in\RR[[t]]$ if $\gamma\in\fix$.
	\item For each edge $\gamma$, one has $y_\tg=\Psi_\gamma(y_\hg)$. In particular, for any marked point $q$, one has
	$$y_\tg-y_\tg(q)=t^{|\gamma|}\alpha_\gamma\left(y_\hg-y_\hg(q)\right).$$
	Moreover $\Psi_\gamma$ is real in the sense that $\Psi_{\sigma(\gamma)}(y)=\overline{\Psi_\gamma(\overline{y})}$.
	\item Let $w,w'$ be two vertices, and $\gamma_1,\dots,\gamma_d$ the geodesic path between from $w$ to $w'$. Let $\varepsilon_i=\pm 1$ according to the orientation of $\gamma_i$ in $\Gamma$, and the orientation in the geodesic path agree or disagree. Then
	$$y_w=\Psi_{\gamma_1}^{\varepsilon_1}\circ\cdots\circ\Psi_{\gamma_d}^{\varepsilon_d}(y_{w'}),$$
	and in particular for any marked point $q$:
	$$y_w-y_w(q)=t^{\sum_1^d \varepsilon_i|\gamma_i|}\left(\prod_1^d \alpha_{\gamma_i}^{\varepsilon_i}\right)\left(y_{w'}-y_{w'}(q)\right).$$
	\item Let $q$ be a marked point associated with an unbounded end $e\in\Gamma_\infty^1$, $w$ be a vertex, and $\gamma_1,\dots,\gamma_d,e$ be the geodesic path from $w$ to $q$. Then
	$$y_w(q)=\Psi_{\gamma_1}^{\varepsilon_1}\circ\cdots\circ\Psi_{\gamma_d}^{\varepsilon_d}(\beta_e),$$
	and in particular for every marked point $q$ associated with the unbounded end $e$, if $v_r$ is the vertex adjacent to the unbounded end associated to $x_r$,
	$$y_{v_r}(q)=\beta_{\gamma_1}+t^{|\gamma_1|}\alpha_{\gamma_1}\left( \cdots (\beta_{\gamma_d}+t^{|\gamma_d|}\alpha_{\gamma_d}\beta_e \right).$$
	\item For every marked point $q_i$ and vertex $w\in\Gamma^0$, $\val(y_w(q))\geqslant 0\Leftrightarrow$ $q$ is accessible by $w$.
	\item For every edge $\gamma\in\Gamma^1$ and every marked point $q\in I^\infty_\tg\backslash I_\hg^\infty$, we have $\val(y_\tg(q)-\beta_\gamma)=0$.
	\end{enumerate}
	\end{prop}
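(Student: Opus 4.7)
The plan is to prove the seven items in a natural dependency order, taking (iii) as the foundational identity and deriving the rest from it.

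\textbf{Step 1: Establish (iii).} Both $y_\tg$ and $y_\hg$ are rational coordinates on $\PP^1(\CC((t)))$ that send $x_r$ to $\infty$, so they are related by a unique affine transformation $y_\tg = \beta + \alpha\, y_\hg$. I would identify $\alpha$ and $\beta$ by evaluating this relation at the two additional reference points used to normalize $y_\hg$, doing the verification case by case across the six combinations (RR/RR), (RR/RC), (RC/RR), (RC/RC), (CC/-), (-/CC). Matching with the definitions yields $\beta = \beta_\gamma$ and $\alpha = t^{|\gamma|}\alpha_\gamma$, hence $y_\tg = \Psi_\gamma(y_\hg)$. The reality assertion $\Psi_{\sigma(\gamma)}(y) = \overline{\Psi_\gamma(\overline{y})}$ follows because each defining formula is invariant under $\sigma$, which also proves (ii) directly.

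\textbf{Step 2: Prove (i).} The condition $\alpha_\gamma \in \CC[[t]]^\times$ is equivalent to $\val(\alpha_\gamma) = 0$; this is immediate from the definitions, since in each of the six cases $|\gamma|$ was \emph{defined} as the valuation of the quantity appearing after $t^{-|\gamma|}$. For the invertibility of $\beta_\gamma - \beta_{\gamma'}$ when $\mathfrak{t}(\gamma) = \mathfrak{t}(\gamma')$, I would argue that $\beta_\gamma$ and $\beta_{\gamma'}$ are $y_\tg$-coordinates (or real parts thereof) of marked points reached through disjoint subtrees rooted at $\tg$; their specializations on the component $C_\tg$ are therefore distinct, forcing the difference to have valuation zero.

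\textbf{Step 3: Derive (iv), (v), (vi), (vii).} Item (iv) follows by induction on $d$ from (iii), with $\Psi_\gamma^{-1}(y) = (y - \beta_\gamma)/(t^{|\gamma|}\alpha_\gamma)$ handling edges traversed against their orientation. Item (v) is the specialization of (iv) to a geodesic ending at a marked point $q$ on an unbounded end $e$, using $y_{\mathfrak{t}(e)}(q) = \beta_e$ and unfolding the composition into the nested expression $\beta_{\gamma_1} + t^{|\gamma_1|}\alpha_{\gamma_1}(\beta_{\gamma_2}+\cdots)$. For (vi), expanding $y_w(q)$ via (v) shows its valuation equals $\min_k \sum_{i<k}\varepsilon_i |\gamma_i|$ (using (i)); this quantity is nonnegative precisely when every $\varepsilon_i = +1$, i.e.\ when the path from $w$ to $q$ agrees with the tree orientation, i.e.\ when $q$ is accessible from $w$. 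For (vii), if $q \in I^\infty_\tg \setminus I^\infty_\hg$ then the geodesic from $\tg$ to $q$ starts along some edge $\gamma' \neq \gamma$ with $\mathfrak{t}(\gamma') = \tg$; by (v), $y_\tg(q) = \beta_{\gamma'} + t^{|\gamma'|}(\cdots)$, so $y_\tg(q) - \beta_\gamma = (\beta_{\gamma'} - \beta_\gamma) + t^{|\gamma'|}(\cdots)$, which has valuation zero by (i).

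\textbf{Main obstacle.} The genuine bookkeeping difficulty lies in Step 1: the six-way case analysis for (iii) must be carried out carefully, tracking which of $y_\tg(x_\cdot)$, $\re y_\tg(z_\cdot^\pm)$, or $\im y_\tg(z_\cdot^+)$ plays the role of $\beta_\gamma$ and which rescaling defines $\alpha_\gamma$. A secondary subtlety is the invertibility claim in (i), which relies implicitly on the genericity of the marked configuration ensuring that distinct edges out of a common vertex carry distinct specializations of their $\beta$-values; this is where the stability of the model $\mathcal{C}^{(0)}$ is used. Once (iii) and (i) are in place, the remaining items reduce to iterating $\Psi_\gamma$ and reading off valuations.
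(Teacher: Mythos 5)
Your proposal is correct and follows essentially the same route as the paper, whose proof is just a one-line checklist ((i)--(ii) from the definitions, (iii) from the definition of $\Psi_\gamma$, (iv)--(v) by iteration, (vi)--(vii) from (v) and (i)); you simply fill in the same details, correctly identifying that the affine change of coordinates between $y_\tg$ and $y_\hg$ is pinned down by the normalization points and that the non-cancellation $\beta_\gamma-\beta_{\gamma'}\in\CC[[t]]^\times$ comes from distinct special points on the component $C_\tg$. The only imprecision is the claimed exact formula $\val(y_w(q))=\min_k\sum_{i<k}\varepsilon_i|\gamma_i|$ in (vi), which need not hold verbatim (e.g.\ when some $\beta_{\gamma_i}$ has positive valuation), but the sign statement you actually need follows from the non-cancellation at the first backward edge exactly as in your (vii), so this does not affect the argument.
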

	
	\begin{proof}
	It suffices to check every statement:
	\begin{itemize}[label=-]
	\item $(i)$ and $(ii)$ follow from the definition of $\alpha$ and $\beta$,
	\item $(iii)$ comes from the definition of $\Psi_\gamma$ and from the fact that $\alpha$ and $\beta$ are real,
	\item $(iv)$ and $(v)$ are just iterations from $(iii)$,
	\item $(vi)$ comes from $(v)$ and,$(vii)$ follows from $(i)$ and $(v)$.
	\end{itemize}	 
	\end{proof}
	
	\begin{rem}
	Formally speaking, this proposition is the direct translation of Proposition $4.3$ in \cite{tyomkin2017enumeration}, in the setting of curves with a real structure. Although the formulas seem quite repulsive at the first look, the meaning of each object must be clear. 
The formal series $\alpha_\gamma$ and $\beta_\gamma$ allow one to recover the coordinates of the marked points, following the formula $(v)$ of Proposition \ref{propcalcul}. 
The formal series $\alpha_\gamma$ are the \textit{"phase length"} of the edge $\gamma$, in contrast to $t^{|\gamma|}$ which could be called \textit{"valuation length"}, while the formal series $\beta_\gamma$ are the directions one needs to follow at each $w$ in order to get to the points of $I^\infty_\hg$. In other terms, $\alpha$ and $\beta$ provide the necessary coefficients to find the coordinates of the marked points. One could say that the abstract tropical curve $\Gamma$ only remembers the valuation information, while the formal series $\alpha$ and $\beta$ encode the phase information. 
	\end{rem}
	
	Let $v=v_r$ be the vertex adjacent to the end $x_r$, the uplet
	$$(y_v(x_1),\dots,y_v(x_{r-1}),y_v(z_1^+),\dots,y_v(z_s^+))\in\RR((t))^{r-1}\times\CC((t))^s$$
	provides a system of coordinates on the moduli space of real rational marked curves, that we can restrict on the moduli space of curves tropicalizing on $\Gamma$. Notice that the choice of $y_v$ fixes the value of some members of the uplet. The definition of $\alpha,\beta$ along with a quick induction ensures that they can be written in terms of $\big(y_v(q)\big)_q$. Conversely, the formula from Proposition \ref{propcalcul}$(v)$ allows to recover $\big(y_v(q)\big)_q$ from $\alpha$ and $\beta$. Therefore, they also provide a system of coordinates. Moreover,   Proposition \ref{propcalcul} describes the set of possible values of $\alpha$, $\beta$, since the formula from Proposition \ref{propcalcul}$(v)$ gives the values of the points to choose on $\PP^1\left(\CC((t))\right)$, in order to make it into a marked curve with the right tropicalization and the right formal series $\alpha$, $\beta$.\\
	
	We denote by $\mathcal{A}$ the space $\left(\RR[[t]]^\times\right)^{e_\RR} \times \left(\CC[[t]]^\times\right)^{e_\CC}$ of possible values of $\alpha$. We denote by $\mathcal{B}$ the space of possible values of $\beta$ satisfying the conditions of Proposition \ref{propcalcul}.

	\subsection{Space of morphisms with given tropicalization} 
	
	Let $h:\Gamma\rightarrow N_\RR$ be a real rational parametrized tropical curve of degree $\Delta$. In this subsection we give an explicit description of morphisms $f:(C,\textbf{x},\textbf{z}^\pm)\rightarrow\text{Hom}(M,\CC((t))^*)$ of degree $\Delta$ tropicalizing to it, and for which $(C,\textbf{x},\textbf{z}^\pm)$ is a smooth connected marked rational curve.\\
	
	Let $f:C\rightarrow\text{Hom}(M,\CC((t))^*)$ be a real morphism that tropicalizes to $h$. By assumption, in the coordinate $y_w$, the morphism $f$ takes the following form:
	$$f(y_w)=t^{h(w)}\chi_w\prod_{i\in I^\infty_w} (y_w-y_w(q_i))^{n_i} \prod_{i\notin I^\infty_w}\left( \frac{y_w}{y_w(q_i)}-1\right)^{n_i} \in\text{Hom}(M,\CC((t))^*).$$
	
\begin{rem}	
Notice that $t^{h(w)}$ denotes the morphism $m\mapsto t^{h(w)(m)}\in\CC((t))^*$. The choice of normalization in the product ensures that $\chi_w:M\rightarrow\CC((t))^*$ has value in $\CC[[t]]^\times$. Finally, both products are indexed by the ends of $\Gamma$, and although the writing does not emphasize this aspect, there are real ends and complex ends. Furthermore, there is a constant term $(-1)$ in the second product, corresponding to the root $x_r$, for which $y_w(x_r)=\infty$ for any $w$.
\end{rem}

We now relate the expression of $f$ in two different vertices. We assume that they are connected by an edge $\gamma$. Let
$$\phi_\gamma=\prod_{i\in I^\infty_\tg\backslash I^\infty_\hg}\left( y_\tg(q_i)-\beta_\gamma\right)^{n_i} \prod_{i\notin I^\infty_\tg}\left( 1-\frac{\beta_\gamma}{y_\tg(q_i)} \right)^{n_i}\in\text{Hom}(M,\CC((t))^*.$$
Notice that $\phi_\gamma$ depends only on the value of $\alpha,\beta$, and is thus a function $\phi_\gamma(\alpha,\beta)$.

\begin{prop}\label{morphism}
We have the following properties of $\chi_w$ and $\phi_\gamma$:
\begin{enumerate}[label=(\roman*)]
\item The $\chi_w$ are real: $\chi_{\sigma(w)}=\overline{\chi_w}$. In particular, if $w\in\fix$, $\chi_w$ takes values in $\RR[[t]]^\times$.
\item The co-characters $\phi_\gamma$ are real: $\phi_{\sigma(\gamma)}=\overline{\phi_\gamma}$. In particular, if $\gamma\in\fix$ is a fixed edge, the co-character $\phi_\gamma$ takes values in $\RR[[t]]^\times$.
\item For any edge $\gamma$, let $n_\gamma$ denote the slope of $h$ on $\gamma$. One has
$$\phi_\gamma\cdot\frac{\chi_\tg}{\chi_\hg}\cdot\alpha_\gamma^{n_\gamma}=1\in\text{Hom}(M,\CC((t))^*).$$
\end{enumerate}
\end{prop}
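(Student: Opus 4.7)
Parts $(i)$ and $(ii)$ are consequences of reality. For $(i)$, the expression of $f$ in the coordinate $y_w$ reads
$$f(y_w)=t^{h(w)}\chi_w\prod_{i\in I^\infty_w}(y_w-y_w(q_i))^{n_i}\prod_{i\notin I^\infty_w}\left(\frac{y_w}{y_w(q_i)}-1\right)^{n_i};$$
applying the real structure, using $y_{\sigma(w)}=\overline{y_w\circ\sigma}$ and the fact that $\sigma$ permutes the marked points with matching slopes ($n_{\sigma(i)}=n_i$ since $\sigma$ exchanges edges of equal slope by the definition of a real parametrized tropical curve), the two products transform into one another up to conjugation, forcing $\chi_{\sigma(w)}=\overline{\chi_w}$. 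For $(ii)$, the definition of $\phi_\gamma$ involves only $\beta_\gamma$ and the quantities $y_\tg(q_i)$, both of which behave well under conjugation by Proposition \ref{propcalcul}$(ii)$, so the reality is automatic.

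The content is in $(iii)$, which is a direct computation comparing the two normal forms of $f$ at $\tg$ and $\hg$. I would substitute $y_\tg=\Psi_\gamma(y_\hg)=\beta_\gamma+t^{|\gamma|}\alpha_\gamma y_\hg$ in the $\tg$-expression and partition the product indices into the three sets $I^\infty_\hg$, $I^\infty_\tg\setminus I^\infty_\hg$ and $I\setminus I^\infty_\tg$. For $i\in I^\infty_\hg$ Proposition \ref{propcalcul}$(iii)$ gives directly
$$(y_\tg-y_\tg(q_i))^{n_i}=(t^{|\gamma|}\alpha_\gamma)^{n_i}(y_\hg-y_\hg(q_i))^{n_i}.$$
For $i\in I^\infty_\tg\setminus I^\infty_\hg$ the same relation, after factoring out $-(y_\tg(q_i)-\beta_\gamma)$, rewrites as
$$(y_\tg-y_\tg(q_i))^{n_i}=(y_\tg(q_i)-\beta_\gamma)^{n_i}\left(\frac{y_\hg}{y_\hg(q_i)}-1\right)^{n_i},$$
using $y_\hg(q_i)=(y_\tg(q_i)-\beta_\gamma)/(t^{|\gamma|}\alpha_\gamma)$. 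Finally, for $i\notin I^\infty_\tg$ a similar rearrangement yields
$$\left(\frac{y_\tg}{y_\tg(q_i)}-1\right)^{n_i}=\left(1-\frac{\beta_\gamma}{y_\tg(q_i)}\right)^{n_i}\left(\frac{y_\hg}{y_\hg(q_i)}-1\right)^{n_i}.$$

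Collecting factors, the products containing $y_\hg$ combine to reproduce exactly the normal form of $f$ in the coordinate $y_\hg$, while the remaining factors regroup into $\alpha_\gamma^{\sum_{i\in I^\infty_\hg}n_i}\cdot\phi_\gamma$. Together with the identity $h(\hg)-h(\tg)=|\gamma|n_\gamma$ and the balancing-condition consequence $n_\gamma=\sum_{i\in I^\infty_\hg}n_i$ (induction from the leaves downstream of $\hg$), the $t^{|\gamma|}$-powers absorb into the shift $t^{h(\tg)}\mapsto t^{h(\hg)}$, leaving $\chi_\hg=\chi_\tg\,\alpha_\gamma^{n_\gamma}\phi_\gamma$, which is $(iii)$.

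The only mildly delicate point is bookkeeping: ensuring that the minus signs produced when factoring $y_\tg(q_i)-\beta_\gamma$ out of $y_\tg-y_\tg(q_i)$ cancel cleanly across the two affected cases, and that the factorizations land in $\CC[[t]]^\times$ so the manipulations are legitimate. The latter is guaranteed by Proposition \ref{propcalcul}$(vii)$, which says $\val(y_\tg(q_i)-\beta_\gamma)=0$ precisely for $i\in I^\infty_\tg\setminus I^\infty_\hg$, while for $i\notin I^\infty_\tg$ the factor $1-\beta_\gamma/y_\tg(q_i)$ is a unit by Proposition \ref{propcalcul}$(vi)$--$(vii)$. Everything else is routine algebra.
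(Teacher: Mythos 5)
Your proposal is correct and follows essentially the same route as the paper: the paper divides the two normal forms of $f$ at $\mathfrak{t}(\gamma)$ and $\mathfrak{h}(\gamma)$ and then manipulates the leftover ratio of normalization constants, while you substitute $y_{\mathfrak{t}(\gamma)}=\Psi_\gamma(y_{\mathfrak{h}(\gamma)})$ and regroup factor by factor, but both rest on the same identities from Proposition \ref{propcalcul} and the same balancing-condition consequence $n_\gamma=\sum_{i\in I^\infty_{\mathfrak{h}(\gamma)}}n_i$. Your sign worry is unfounded (no stray minus signs appear in the factorizations), and the only point you share with the paper's proof in glossing over is the conventional treatment of the root $x_r$, for which $y_w(x_r)=\infty$; the paper addresses this in a remark following its proof.
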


\begin{proof}
The first two points are immediate to check and follow from the definition of $\phi_\gamma$ along with the fact that $f$ is real. For the last point, we start by making the quotient of the two expressions of $f$ in the coordinates $y_\tg$ and $y_\hg$, and use Proposition \ref{propcalcul} that ensures that $y_\tg-y_\tg(q_i)=t^{|\gamma|}\alpha_\gamma(y_\hg-y_\hg(q_i))$. Hence,
$$\frac{t^{h(\tg)}}{t^{h(\hg)}}\cdot\frac{\chi_\tg}{\chi_\hg}\cdot\frac{\prod_{i\notin I^\infty_\hg} y_\hg(q_i)^{n_i}}{\prod_{i\notin I^\infty_\tg} y_\tg(q_i)^{n_i}}\cdot \left(t^{|\gamma|}\alpha_\gamma\right)^{\sum_i n_i}=1\in\text{Hom}(M,\CC((t))^*).$$
Since $h(\hg)-h(\tg)=|\gamma|n_\gamma$, and $\sum_i n_i=0$ by balancing condition, we get
$$t^{-|\gamma|n_\gamma}\frac{\prod_{i\notin I^\infty_\hg} y_\hg(q_i)^{n_i}}{\prod_{i\notin I^\infty_\tg} y_\tg(q_i)^{n_i}}\cdot\frac{\chi_\tg}{\chi_\hg}=1\in\text{Hom}(M,\CC((t))^*).$$
Finally, using that $y_\hg(q_i)=\frac{y_\tg(q_i)-\beta_\gamma}{t^{|\gamma|}\alpha_\gamma}$, we get that
$$\frac{\prod_{i\notin I^\infty_\hg} y_\hg(q_i)^{n_i}}{\prod_{i\notin I^\infty_\tg} y_\tg(q_i)^{n_i}}    =  (t^{|\gamma|}\alpha_\gamma)^{-\sum_{i\notin I^\infty_\hg}n_i} \frac{\prod_{i\notin I^\infty_\hg} (y_\tg(q_i)-\beta_\gamma)^{n_i}}{\prod_{i\notin I^\infty_\tg} y_\tg(q_i)^{n_i}}.   $$
Since by adding the balancing condition at the vertices not accessible via $\gamma$ we get $\sum_{i\notin I^\infty_\hg}n_i=-n_\gamma$, and as $\complement I^\infty_\tg\subset\complement I^\infty_\hg$, we get that
$$\frac{\prod_{i\notin I^\infty_\hg} y_\hg(q_i)^{n_i}}{\prod_{i\notin I^\infty_\tg} y_\tg(q_i)^{n_i}} = t^{|\gamma|n_\gamma}\cdot\alpha_\gamma^{n_\gamma}\cdot\phi_\gamma,$$
which results in the desired formula.
\end{proof}
	
\begin{rem}
There is a slight misnomer in the proof: because $y_w(x_r)=\infty$, the intermediate steps of computation are not well-defined. However, the computation remains true, either by allowing a finite value to $y_w(x_r)$ and then making it $\infty$, or by putting it apart in the computation, which complicates the explanation.
\end{rem}

Conversely, if we are given a real family of $\chi_w:M\rightarrow\CC[[t]]^\times$ such that Proposition \ref{morphism} holds for any $\gamma$, then the maps defined by the formulas 
$$f(y_w)=t^{h(w)}\chi_w\prod_{i\in I^\infty_w} (y_w-y_w(q_i))^{n_i} \prod_{i\notin I^\infty_w}\left( \frac{y_w}{y_w(q_i)}-1\right)^{n_i}$$
agree and define a real morphism $f$ tropicalizing to $h:\Gamma\rightarrow N_\RR$.\\

If $G$ is an abelian group, let $N_G=N\otimes G$. Let $\mathcal{X}=N_{\RR[[t]]^\times}^{v_\RR}\times N_{\CC[[t]]^\times}^{v_\CC}$ be the space where the tuple $\chi$ is chosen. Then, the space of morphisms $f:(C,\textbf{x},\textbf{z}^\pm)\rightarrow N\otimes\CC((t))^*$ tropicalizing to $h:\Gamma\rightarrow N_\RR $ is the subset of $\mathcal{X}\times\mathcal{A}\times\mathcal{B}$ given by the following equations:
$$\forall\gamma\in \Gamma^1_b\ :\ \phi_\gamma(\alpha,\beta)\cdot\frac{\chi_\tg}{\chi_\hg}\cdot\alpha_\gamma^{n_\gamma}=1\in\text{Hom}(M,\CC((t))^*).$$
The tuples $\alpha,\beta$ deal with the tropicalization of the curve, and the tuple $\chi$ with the tropicalization of the morphism.
	
	\subsection{Evaluation map} 

We now use the previous description to write down the conditions that a curve having fixed moments must satisfy. For each $n_j\in\Delta$, let $m_j=\iota_{n_j}\omega$ be the monomial used to measure the moment of the corresponding end $q_j$.\\

Let $q_j$ be a real or complex marked point, and $v_j$ be the adjacent vertex of the associated unbounded end $e_j$. In the coordinate $y_{v_j}$, the expression of the moment of the marked point $q_j$ takes the following form:
$$f^*\chi^{m_j}|_{q_j} = t^{h(v_j)(m_j)}\chi_{v_j}(m_j)\prod_{i\in I^\infty_{v_j}}\left(\beta_{e_j}-y_{v_j}(q_i)\right)^{\omega(n_j,n_i)}\prod_{i\notin I^\infty_{v_j}}\left( \frac{\beta_{e_j}}{y_{v_j}(q_i)}-1\right)^{\omega(n_j,n_i)}.$$
We then put 
$$\varphi_j=\prod_{i\in I^\infty_{v_j}}\left(\beta_{e_j}-y_{v_j}(q_i)\right)^{\omega(n_j,n_i)}\prod_{i\notin I^\infty_{v_j}}\left( \frac{\beta_{e_j}}{y_{v_j}(q_i)}-1\right)^{\omega(n_j,n_i)}\in\CC[[t]]^\times,$$
which, according to Proposition \ref{propcalcul}, is an invertible formal series only depending on $\alpha,\beta$. The series is invertible since the only terms of the product having positive valuation are taken with a zero exponent.

\begin{prop}
The formal series $\varphi_j$ are real: $\varphi_{\sigma(j)}=\overline{\varphi_j}$ for every end $e_j$, and in particular, if $x_j$ is a real marked point, then $\varphi_j\in\RR[[t]]^\times$. Moreover, $\varphi$ is a function of $\alpha,\beta$, \textit{i.e.} it does not depend on $\chi$.
\end{prop}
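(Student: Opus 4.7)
The plan is to derive both assertions as direct consequences of Proposition~\ref{propcalcul} together with the compatibility of the coordinates $y_w$ with the involution $\sigma$.

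I would first record the reality properties that are built into our setup. By construction of the $y_w$ one has $y_{\sigma(w)} = \overline{y_w \circ \sigma}$, and consequently $y_{\sigma(v_j)}(q_{\sigma(i)}) = \overline{y_{v_j}(q_i)}$ for every vertex $v_j$ and every marked point $q_i$. Moreover $v_{\sigma(j)} = \sigma(v_j)$, $I^\infty_{\sigma(v_j)} = \sigma(I^\infty_{v_j})$, and by Proposition~\ref{propcalcul}$(ii)$ one has $\beta_{\sigma(e_j)} = \overline{\beta_{e_j}}$. Finally, since $h \circ \sigma = h$, exchanged ends carry the same slope, so $n_{\sigma(i)} = n_i$ and hence $\omega(n_{\sigma(j)}, n_{\sigma(i)}) = \omega(n_j, n_i)$.

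To prove $\varphi_{\sigma(j)} = \overline{\varphi_j}$ I would write out $\varphi_{\sigma(j)}$ from the defining formula, replace $v_{\sigma(j)}$ by $\sigma(v_j)$ and $I^\infty_{v_{\sigma(j)}}$ by $\sigma(I^\infty_{v_j})$, and then reindex the two products by $i \mapsto \sigma(i)$. The observations above show that each factor transforms into the complex conjugate of the corresponding factor of $\varphi_j$, while the exponents $\omega(n_j, n_i)$ are preserved; the whole product is therefore $\overline{\varphi_j}$. Specializing to the case $\sigma(j) = j$ gives $\varphi_j \in \RR[[t]]$ for a real marked point $x_j$. For invertibility, Proposition~\ref{propcalcul}$(vi)$--$(vii)$ implies that $\beta_{e_j} - y_{v_j}(q_i)$ has valuation zero for every $i \in I^\infty_{v_j}$ with $i \ne j$, while the factor indexed by $i=j$ carries the exponent $\omega(n_j, n_j) = 0$; similarly the factors $\beta_{e_j}/y_{v_j}(q_i) - 1$ with $i \notin I^\infty_{v_j}$ are units.

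For the last claim, Proposition~\ref{propcalcul}$(v)$ provides an explicit expression of each $y_{v_j}(q_i)$ as an iteration of the affine maps $\Psi_\gamma(y) = \beta_\gamma + t^{|\gamma|}\alpha_\gamma y$ along the geodesic from $v_j$ to the end containing $q_i$, ending with $\beta_{e_i}$. Substituting into the definition writes $\varphi_j$ entirely in terms of the $\alpha_\gamma$ and $\beta_\gamma$ (together with the fixed edge-length data), so $\varphi_j$ is indeed a function of $(\alpha, \beta)$ alone and is independent of $\chi$. The only real difficulty is the bookkeeping needed to track which marked points lie in $I^\infty_{v_j}$ and to match them under $\sigma$; once the identity $y_{\sigma(w)} = \overline{y_w \circ \sigma}$ is in hand, everything reduces to inspection of the defining formula.
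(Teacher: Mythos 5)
Your proof is correct and follows essentially the same route as the paper, whose own proof is a one-line appeal to the reality of all quantities entering the definition of $\varphi_j$ and to Proposition \ref{propcalcul}; you simply make explicit the reindexing under $\sigma$, the unit property via parts $(vi)$--$(vii)$, and the $(\alpha,\beta)$-dependence via part $(v)$.
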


\begin{proof}
It follows from the fact that all the quantities that intervene in the definition of $\varphi_j$ are real. The second part is obvious.
\end{proof}



Thus, with this new notation, the evaluation map takes the following form:

$$f^*\chi^{m_j}|_{q_j} = t^{h(v_j)(m_j)}\chi_{v_j}(m_j)\varphi_j.$$

	\subsection{Correspondence theorem}
	\label{correspondence theorem section}
	Now we look at the following map: 
	$$\begin{array}{rccl}
	\Theta : & \mathcal{X}\times\mathcal{A}\times\mathcal{B} &\longrightarrow & \left(N_{\RR[[t]]^\times}^{e_\RR}\times N_{\CC[[t]]^\times}^{e_\CC}\right)\times\RR[[t]]^{\times r-1}\times\CC[[t]]^{\times s} \\
		& (\chi,\alpha,\beta) & \longmapsto & \left( \left(\phi_\gamma\cdot\frac{\chi_\tg}{\chi_\hg}\cdot\alpha_\gamma^{n_\gamma}\right)_{\gamma\in\Gamma^1_b},\left(\chi_{v_j}(m_j)\varphi_j \right)_j \right) \\
	\end{array}.$$
	This map is the same as in \cite{tyomkin2017enumeration} but for a curve endowed with a non-trivial real involution. That is why we take every real vertex or real edge with real coefficients, and only one of every pair of complex vertices or complex edges with complex coefficients.\\
	
	One can check that the dimensions of the source and target spaces are the same:
	\begin{itemize}[label=-]
	\item Since $N$ has rank $2$, the target space has dimension $2e_\RR+4e_\CC+r-1+2s$.
	\item The space $\mathcal{A}$ has dimension $e_\RR+2e_\CC$ since one chooses a real formal series for each real edge, and a complex one for each pair of conjugated edges.
	\item The space $\mathcal{X}$ has dimension $2(v_\RR+2v_\CC)$ since $N$ has rank $2$.
	\item The dimension of $\mathcal{B}$ is precisely $\text{ov}(\Gamma)$ since we choose a real or complex coefficient for each edge $\gamma$ for which $\tg$ is not trivalent.
	\end{itemize}
	We thus have to check that
	$$2e_\RR+4e_\CC+r+2s-1=e_\RR+2e_\CC+2(v_\RR+2v_\CC)+\text{ov}(\Gamma),$$
which is equivalent to 
	$$\text{ov}(\Gamma)-(r+2s)=e_\RR+2e_\CC-2(v_\RR+2v_\CC)-1 .$$
	Since $\Gamma$ is a tree, the Euler characteristic gives the following relation:
	$$1+e_\RR+2e_\CC=v_\RR+2v_\CC.$$
	The count of the valencies of the vertices gives also:
	$$r+2s+2e_\RR+4e_\CC=3(v_\RR+2v_\CC)+\text{ov}(\Gamma).$$
	These relations lead to
	\begin{align*}
	\text{ov}(\Gamma)-(r+2s) &=2e_\RR+4e_\CC-3v_\RR-6v_\CC \\
	&=e_\RR+2e_\CC-2(v_\RR+2v_\CC)-1. \\
	\end{align*}
	Thus, the dimensions are equal.\\
	
	Let $\zeta\in\RR((t))^{*r-1}\times i\RR((t))^{*s}$ be a generic family of moments, defining a real configuration of points $\mathcal{P}_0$, and thus a symmetric configuration of points $\mathcal{P}$ in $\CC\Delta$ when considering possible changes of signs. Let $\mu_j=\val\zeta_j\in\RR$ be their respective "tropical" moments, which we also assume to be generic. We denote by $\zeta_j^\Gamma=\zeta_j t^{-\mu_j}\in\CC[[t]]^\times$ the moments normalized to have a zero valuation. Any classical curve passing through the symmetric configuration $\mathcal{P}$ tropicalizes on a real parametrized tropical curve $h:\Gamma\rightarrow N_\RR$ satisfying $\text{ev}(\Gamma)=\mu$, and specializes to a tuple $(\chi,\alpha,\beta)$ in $\mathcal{X}\times\mathcal{A}\times\mathcal{B}$, satisfying $\Theta(\chi,\alpha,\beta)=(1,\zeta^\Gamma)$. Moreover, the plane tropical curve image $h(\Gamma)$ has a unique parametrization as a parametrized tropical curve of degree $\Delta(s)$. Notice that the space $\mathcal{X}\times\mathcal{A}\times\mathcal{B}$ depends on the choice of the parametrized tropical curve $(\Gamma,h)$.\\
	
	Conversely, for each real parametrized tropical curve $(\Gamma,h)$ with $\text{ev}(\Gamma)=\mu$, we need to find the classical curves tropicalizing on $(\Gamma,h)$ and passing through the symmetric configuration $\mathcal{P}$. Such a curve corresponds to a point in the moduli space $\mathcal{X}\times\mathcal{A}\times\mathcal{B}$. Finding the curves passing through the symmetric configuration $\mathcal{P}$ and tropicalizing on $\Gamma$ thus amounts to solve for $(\chi,\alpha,\beta)$ the equation $\Theta(\chi,\alpha,\beta)=(1,\zeta^\Gamma)$, for any possible sign of $\zeta^\Gamma$. Recall that the complex moments are purely imaginary.\\
	
	Given a parametrized tropical curve $h_0:\Gamma_0\rightarrow N_\RR$ of degree $\Delta(s)$ with $\text{ev}(\Gamma_0)=\mu$, and a real parametrized curve $h:\Gamma\rightarrow N_\RR$ having the same image $C_\text{trop}=h(\Gamma)=h(\Gamma_0)$, we say that $(\chi_0,\alpha_0,\beta_0)$ is a first order solution if $\Theta(\chi_0,\alpha_0,\beta_0)=(1,\zeta^\Gamma)$ mod $t$. Notice that, as $\mu$ is generic, the image tropical curve $C_\text{trop}$ is a nodal curve. The real rational curves with image $C_\text{trop}$ are described in Lemma \ref{realstruc}. Because of the assumption that all non-real points are on the same divisor, the graph $\Gamma_\text{even}$ consists only in the even unbounded ends. Thus, the only possibility for $\Gamma$ is to be obtained from $\Gamma_0$ by splitting the even unbounded ends, leading either to a trivalent flat vertex somewhere on the unbounded end, or to a quadrivalent one if the splitting occurs at the unique vertex adjacent to the unbounded end. Assume that $(\Gamma,h)$ has no flat vertex. We prove in the next section that it is a necessary condition for $(\Gamma,h)$ to have first order solutions. We then have the following theorem, that allows us to lift first order solutions to true solutions, provided that the Jacobian is invertible.
	
	\begin{theo}
	\label{realization theorem}
	For each real parametrized tropical curve $(\Gamma,h)$ of degree $\Delta$, obtained from a parametrized tropical curve of degree $\Delta(s)$ passing through $\mu$, without any flat vertex, and each first order solution $(\chi_0,\alpha_0,\beta_0)$, the Jacobian of $\Theta$ at $(\chi_0,\alpha_0,\beta_0)$ is invertible at first order, and there is a unique lift of $(\chi_0,\alpha_0,\beta_0)$ to a true solution $(\chi,\alpha,\beta)$ in $\mathcal{X}\times\mathcal{A}\times\mathcal{B}$.
	\end{theo}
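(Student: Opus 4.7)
The plan is to follow the strategy of Tyomkin's realization theorem in \cite{tyomkin2017enumeration}, adapted to handle the nontrivial real structure $\sigma$ on $\Gamma$. The argument proceeds in two stages: first, show that the Jacobian $D\Theta$ at the first order solution $(\chi_0,\alpha_0,\beta_0)$ is invertible modulo $t$; second, invoke a $t$-adic Hensel-type lemma (equivalently, Newton iteration in the complete local rings $\RR[[t]]$ and $\CC[[t]]$) to lift the first order solution to a unique true solution in $\mathcal{X}\times\mathcal{A}\times\mathcal{B}$.

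For the Jacobian analysis, I would exploit the tree structure of $\Gamma$. Rooting $\Gamma$ at $x_r$ gives a partial order $\prec$ on vertices and edges, which induces compatible orderings on the variables $\chi_w$, $\alpha_\gamma$, $\beta_\gamma$ and on the target components (the edge equations $\phi_\gamma\cdot\chi_\tg/\chi_\hg\cdot\alpha_\gamma^{n_\gamma}$ and the moment equations $\chi_{v_j}(m_j)\varphi_j$). Since every factor $t^{|\gamma|}$ with $|\gamma|>0$ vanishes modulo $t$, the derivatives of the edge equation at $\gamma$ and of the moment equation at $e_j$ with respect to variables attached to parts of the graph lying strictly downstream of $\hg$, respectively of $v_j$, all vanish modulo $t$. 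This makes $D\Theta\bmod t$ block-triangular with respect to the chosen ordering, with one diagonal block per vertex of $\Gamma/\sigma$: a real block for $w\in\fix$, and a complex block (of conjugate-pair type) for each pair $\{w,\sigma(w)\}$.

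The heart of the proof is the evaluation of each diagonal block. At a trivalent vertex $w$ with outgoing slopes $u_1,u_2$ (the third determined by balancing), the block has size $2\times 2$ in the real case and, up to a sign and an invertible monomial in the $\alpha$'s and $\chi$'s, its determinant equals $\omega(u_1,u_2)=\pm m_w^\CC$; at an overvalent vertex the additional $\beta$ variables combine with the additional outgoing edges to yield an analogous product of $\omega$-minors, one per pair of edges splitting off at $w$. The non-flatness hypothesis is exactly the condition $\omega(u,v)\neq 0$ at every vertex, so each diagonal block is invertible and therefore $D\Theta\bmod t$ is invertible.

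Given this invertibility, a standard $t$-adic Newton iteration produces a unique formal lift $(\chi,\alpha,\beta)\in\mathcal{X}\times\mathcal{A}\times\mathcal{B}$; its reality follows from the $\sigma$-equivariance of $\Theta$ together with the reality of the target $(1,\zeta^\Gamma)$, which forces each Newton correction to preserve the reality constraints $\chi_{\sigma(w)}=\overline{\chi_w}$, $\alpha_{\sigma(\gamma)}=\overline{\alpha_\gamma}$, $\beta_{\sigma(\gamma)}=\overline{\beta_\gamma}$. The main obstacle is the careful case analysis inside the Jacobian calculation: verifying the triangular block structure and the claimed determinants uniformly across the edge types $(RR)$, $(RC)$, $(CC)$ and across trivalent and overvalent vertices, and ensuring that the real and complex blocks factor as genuinely $\sigma$-compatible pieces.
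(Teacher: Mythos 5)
Your proposal is correct and follows essentially the same route as the paper: invertibility of the Jacobian modulo $t$ is obtained from the tree structure of $\Gamma$, with the local contribution of each vertex being (up to units) the minor $\det(m_0,m_1)=\pm\omega(n_0,n_1)=\pm m_V^\CC$, nonzero by the no-flat-vertex hypothesis, followed by Hensel's lemma for the unique lift. The paper organizes the triangularity as an induction that peels off a leaf vertex and converts the corresponding edge equation into a new moment equation (rather than a literal one-block-per-vertex decomposition, which does not square up row/column counts vertex by vertex), but this is the same idea carried out by explicit Laplace expansion.
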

	
	\begin{rem}
	Extending the theorem to the case of non-real points on several toric divisors would require more computations for the following reason: the Jacobian matrix may not be invertible at first order anymore. Thus, in order to find lifts of first order solutions, one would need to find solutions at a higher order,\textit{e.g.} mod $t^2$ or $t^3$.
	\end{rem}
	
	\begin{proof}
	Let $h:\Gamma\rightarrow N_\RR$ be one of the above real parametrized tropical curves, meaning that $h(\Gamma)$ is a plane tropical curve such that its parametrization $h_0:\Gamma_0\rightarrow N_\RR$ as a curve of degree $\Delta(s)$ satisfies $\text{ev}(\Gamma_0)=\mu$, and that $(\Gamma,h)$ has no flat vertex. Let $(\chi_0,\alpha_0,\beta_0)$ be a first order solution. Using Hensel's lemma, one needs to check that the Jacobian of the map is invertible at $(\chi_0,\alpha_0,\beta_0)$ in order to lift it to a unique true solution. We thus show by induction that the Jacobian matrix is invertible. For simplicity, and because of the multiplicative nature of the map $\Theta$, we use logarithmic coordinates for every variable except $\beta$. It means that we look at $\log\Theta$, depending on the new variables $(\log\chi,\log\alpha,\beta)$. Each time, the logarithm is taken coordinate by coordinate.\\
	
	Notice that if $\gamma$ is an edge, $\log\alpha_\gamma$ and $\beta_\gamma$ are scalars, while if $w$ is a real vertex, $\log\chi_w:M\rightarrow\RR$ is an element of $N_\RR$.\\
	
To compute the Jacobian	relative to coordinates $(\log\chi,\log\alpha,\beta)$ at $t=0$, we can first put $t=0$. Thus, we get for every bounded edge $\gamma\in\Gamma^1_b$:
$$\phi_\gamma|_{t=0}=\prod_{ \substack{ \gamma'\neq\gamma \\ \mathfrak{t}(\gamma')=\tg }} (\beta_{\gamma'}|_{t=0}-\beta_\gamma|_{t=0})^{n_{\gamma'}}\in N\otimes\RR^*,$$
and for every unbounded end $e_j$:
$$\varphi_j|_{t=0}=\pm \prod_{ \substack{ \gamma'\neq e_j \\ \mathfrak{t}(\gamma')=v_j }} (\beta_{e_j}|_{t=0}-\beta_{\gamma'}|_{t=0})^{\omega(n_j,n_{\gamma'})}\in\CC^*.$$
Notice that at the first order, $\phi_\gamma$ depends only on $\beta$ and not $\alpha$. For convenience of notation, all the following computations are taken at the first order, and we drop $"|_{t=0}"$ out of the notation.\\
	
	The description of $\Gamma$ implies that it has only real vertices and real bounded edges. For a bounded edge $\gamma\in\Gamma^1_b$, let $N_\gamma=\phi_\gamma\cdot\frac{\chi_\tg}{\chi_\hg}\cdot\alpha_\gamma^{n_\gamma}\in N_{\RR[[t]]^\times}$. Similarly, let $X_j=\chi_{v_j}(m_j)\varphi_j\in\RR[[t]]^\times$ for a real end $x_j$, and $Z_j=\chi_{v_j}(m_j)\varphi_j\in\CC[[t]]^\times$ for a complex end $z_j^\pm$. The variables $N_\gamma,X_j$ and $Z_j$ index the lines of the Jacobian matrix $\parfrac{\log\Theta}{(\log\chi,\log\alpha,\beta)}$. We now compute these lines.
	
	\begin{itemize}[label=-]
	\item For a general bounded edge $\gamma\in\Gamma^1_b$, one has at the first order
	$$\log N_\gamma=\sum_{ \substack{ \gamma'\neq\gamma \\ \mathfrak{t}(\gamma')=\tg }}  n_{\gamma'}\log(\beta_{\gamma'}-\beta_\gamma) \ +\ \log\chi_\tg-\log\chi_\hg+n_\gamma\log\alpha_\gamma \in\text{Hom}(M,\RR).$$
	Therefore, one has the following partial Jacobian matrices,
	$$\parfrac{\log N_\gamma}{\log\chi_\tg}=I_2 ,\ \parfrac{\log N_\gamma}{\log\chi_\hg}=-I_2 , \ \parfrac{\log N_\gamma}{\log\alpha_\gamma}=n_\gamma.$$
	Concerning the $\beta$ variables, the only cases when $N_\gamma$ depends on a $\beta_{\gamma'}$ parameter at first order is when the tail $\tg$ of $\gamma$ is one of the quadrivalent vertices. In that case, the intervening coordinate is precisely $\beta_\gamma$. Then one has
	$$\parfrac{\log N_\gamma}{\beta_\gamma}=\frac{2\beta_\gamma}{\beta_\gamma^2+1}n_{\gamma'},$$
	where $n_{\gamma'}\in N$ is the vector directing the complex ends adjacents to $\tg$.
	
	\item For a real end $x_j$, corresponding to an end $e_j$, directed by $n_j$ and adjacent to a vertex $v_j$, one has at the first order
	$$\log X_j=\sum_{ \substack{ \gamma'\neq e_j \\ \mathfrak{t}(\gamma')=v_j }} \omega(n_j,n_{\gamma'}) \log(\beta_{e_j}-\beta_{\gamma'}) \ +\ \log\chi_{v_j}(m_j)\in\RR.$$
	The Jacobian relative to $\log\chi\in N\otimes\RR$ is the Jacobian of the evaluation at $m_j$, which is linear with respect to $\log\chi$. Thus, one has
	$$\parfrac{\log X_j}{\log\chi_{v_j}}=m_j=\iota_{n_j}\omega.$$
	Now, the only case where $\log X_j$ depends on a $\beta$ parameter is when $v_j$ is a quadrivalent vertex. In that case, the parameter is $\beta_{e_j}$. Let $n_{\gamma'}$ be the vector directing the complex ends adjacent to $v_j$. Then, one has
	$$\parfrac{\log X_j}{\beta_{e_j}}=\omega(n_j,n_{\gamma'})\frac{2\beta_{e_j}}{\beta_{e_j}^2+1}.$$
	
	\item For the case of $Z_j$, the computations are similar, only this time the target space is $\CC$ instead of $\RR$. The unbounded end $e_j$ is adjacent to a quadrivalent vertex. Let $\gamma$ be the real edge such that $\tg=v_j$. This edge might be unbounded. Then, one has
	$$\log Z_j=\sum_{ \substack{ \gamma'\neq e_j \\ \mathfrak{t}(\gamma')=v_j }} \omega(n_j,n_{\gamma'}) \log(\beta_{e_j}-\beta_{\gamma'}) \ +\ \log\chi_{v_j}(m_j)\in\CC.$$
	Thus, we once again have
	$$\parfrac{\log Z_j}{\log\chi_{v_j}}=m_j=\iota_{n_j}\omega\in M_\RR\subset M_\CC,$$
	and this time
	$$\parfrac{\log Z_j}{\beta_\gamma}=\omega(n_\gamma,n_j)\parfrac{\log(i-\beta_\gamma)}{\beta_\gamma}= \frac{\omega(n_\gamma,n_j)}{\beta_\gamma^2+1}\begin{pmatrix}
\beta_\gamma \\
1 \\	
\end{pmatrix}=	\frac{\omega(n_\gamma,n_j)}{\beta_\gamma^2+1}(\beta_\gamma+i)\in\CC .$$
	\end{itemize}
Now that all the terms of the Jacobian matrix $\parfrac{\log\Theta}{(\log\chi,\log\alpha,\beta)}$ are known, we make an induction on the number of vertices to prove that it is invertible. The initialization is done within the induction step, by removing the column indexed by $\alpha_\gamma$, and the remaining rows and columns which are not drawn on the array. Let $V$ be a vertex of $\Gamma$ which is adjacent to two real unbounded ends, or one real and two complex unbounded ends, as depicted on Figure \ref{induction vertex}.

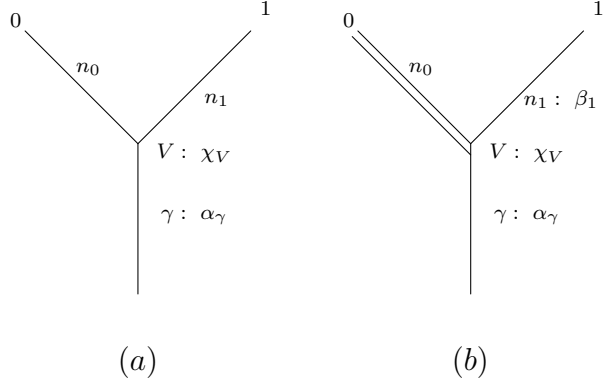
\begin{figure}
\begin{center}
\begin{tabular}{cc}
\begin{tikzpicture}[line cap=round,line join=round,>=triangle 45,x=0.5cm,y=0.5cm]
\clip(-1.,-1.) rectangle (7.,8.);
\draw (3.,0.)-- (3.,4.);
\draw (3.,4.)-- (0.,7.);
\draw (3.,4.)-- (6.,7.);
\begin{scriptsize}
\draw[color=black] (4.5,3.744529205178832) node {$V:\ \chi_V$};
\draw[color=black] (4.5,2.0531972812887695) node {$\gamma:\ \alpha_\gamma$};
\draw[color=black] (-0.24508937151643118,7.280950500585326) node {$0$};
\draw[color=black] (1.6838676987052943,5.99497912043751) node {$n_0$};
\draw[color=black] (6.4,7.6) node {$1$};
\draw[color=black] (5.1,5.170280083168802) node {$n_1$};
\end{scriptsize}
\end{tikzpicture}
&
\begin{tikzpicture}[line cap=round,line join=round,>=triangle 45,x=0.5cm,y=0.5cm]
\clip(-1.,-1.) rectangle (7.,8.);
\draw (3.,0.)-- (3.,4.);
\draw (3.,4.)-- (0.,7.);
\draw (3.,3.7)-- (-0.15,6.85);
\draw (3.,4.)-- (6.,7.);
\begin{scriptsize}
\draw[color=black] (4.5,3.744529205178832) node {$V:\ \chi_V$};
\draw[color=black] (4.5,2.0531972812887695) node {$\gamma:\ \alpha_\gamma$};
\draw[color=black] (-0.24508937151643118,7.280950500585326) node {$0$};
\draw[color=black] (1.6838676987052943,5.99497912043751) node {$n_0$};
\draw[color=black] (6.4,7.6) node {$1$};
\draw[color=black] (5.4,5.170280083168802) node {$n_1:\ \beta_1$};
\end{scriptsize}
\end{tikzpicture}
\\
$(a)$ & $(b)$ \\
\end{tabular}
\label{induction vertex}
\caption{Vertices adjacent to two real ends $(a)$ or a real end and two complex ends $(b)$.}
\end{center}
\end{figure}

\begin{itemize}[label=-]
\item Let $\gamma$ be the edge with $\hg=V$. We assume that $\gamma$ is a bounded edge. In the first case, the matrix has the following form

$$\text{Jac}\Theta=\begin{array}{lcccccc}
                                        & \multicolumn{1}{l}{} & \multicolumn{1}{l}{} & \multicolumn{1}{l}{}   & \multicolumn{2}{c}{\chi_V}            & \multicolumn{1}{c}{\alpha_\gamma}                 \\ \cline{5-7} 
                                        &                      &                      & \multicolumn{1}{c|}{}  & 0     & \multicolumn{1}{c|}{0}     & \multicolumn{1}{c|}{0}                    \\
                                        &                      & \ast                  & \multicolumn{1}{c|}{}  & \vdots & \multicolumn{1}{c|}{\vdots} & \multicolumn{1}{c|}{\vdots}                \\
                                        &                      &                      & \multicolumn{1}{c|}{}  & 0     & \multicolumn{1}{c|}{0}     & \multicolumn{1}{c|}{0}                    \\ \cline{2-7} 
\multicolumn{1}{r|}{\multirow{2}{*}{$N_\gamma$}} &        \ast              &                      \cdots & \multicolumn{1}{c|}{\ast}  & -1    & \multicolumn{1}{c|}{0}     & \multicolumn{1}{c|}{\multirow{2}{*}{$n_0+n_1$}} \\
\multicolumn{1}{r|}{}                   &        \ast              &                     \cdots & \multicolumn{1}{c|}{\ast}  & 0     & \multicolumn{1}{c|}{-1}    & \multicolumn{1}{c|}{}                     \\ \cline{2-7} 
\multicolumn{1}{r|}{C_0}                  & 0                    & \cdots                & \multicolumn{1}{c|}{0} & \multicolumn{2}{c|}{m_0}             & \multicolumn{1}{c|}{0}                    \\ \cline{2-7} 
\multicolumn{1}{r|}{C_1}                  & 0                    & \cdots                & \multicolumn{1}{c|}{0} & \multicolumn{2}{c|}{m_1}             & \multicolumn{1}{c|}{0}                    \\ \cline{2-7} 
\end{array}.$$
By developing with respect to the last two rows, since $(m_0,m_1)$ are free, we are left with the following determinant:
$$\begin{array}{lcccc}
                                        & \multicolumn{1}{l}{} & \multicolumn{1}{l}{} & \multicolumn{1}{l}{}     & \multicolumn{1}{l}{\alpha_\gamma}                 \\ \cline{5-5} 
                                        &                      &                      & \multicolumn{1}{c|}{}    & \multicolumn{1}{c|}{0}                    \\
                                        &                      & \ast                  & \multicolumn{1}{c|}{}    & \multicolumn{1}{c|}{\vdots}                \\
                                        &                      &                      & \multicolumn{1}{c|}{}    & \multicolumn{1}{c|}{0}                    \\ \cline{2-5} 
\multicolumn{1}{r|}{\multirow{2}{*}{$N_\gamma$}} & \ast                  & \cdots                & \multicolumn{1}{c|}{\ast} & \multicolumn{1}{c|}{\multirow{2}{*}{$n_\gamma$}} \\
\multicolumn{1}{r|}{}                   & \ast                  & \cdots                & \multicolumn{1}{c|}{\ast} & \multicolumn{1}{c|}{}                     \\ \cline{2-5} 
\end{array}.$$
If $\gamma$ was an unbounded end, we would be left with the empty matrix and we would have proven invertibleness. Otherwise, the last two rows correspond to a copy of $N_\RR$, and are thus given by two elements of $M_\RR$, the dual of $N_\RR$. Up to a change of basis, one can assume that one of these elements of $M_\RR$ is $\omega(n_\gamma,-)$, which takes $0$ value on $n_\gamma$. Thus, by making a development with respect to the column, we are reduced to the determinant matrix where the bounded edge $\gamma$ is replaced with a unbounded real end, directed by $n_\gamma$, and $N_\gamma$ is replaced by the evaluation of the moment of this new unbounded end. Thus, the matrix is invertible by induction.

\item If the vertex $V$ is adjacent to two complex unbounded ends (directed by $n_0$), and to a real unbounded end (directed by $n_1$), let $\beta_1$ be the $\beta$ coordinate associated to the real end. Let $\gamma$ be the edge with $\hg=V$, thus directed by $2n_0+n_1$. The determinant takes the following form:
$$\text{Jac}\Theta=\begin{array}{lccccccc}
                                        &     &       &                          & \multicolumn{2}{c}{\chi_V}            & \alpha_\gamma                                   & \beta_1                       \\ \cline{5-8} 
                                        &     &       & \multicolumn{1}{c|}{}    & 0     & \multicolumn{1}{c|}{0}     & \multicolumn{1}{c|}{0}                  & \multicolumn{1}{c|}{0}     \\
                                        &     & \ast   & \multicolumn{1}{c|}{}    & \vdots & \multicolumn{1}{c|}{\vdots} & \multicolumn{1}{c|}{\vdots}              & \multicolumn{1}{c|}{\vdots} \\
                                        &     &       & \multicolumn{1}{c|}{}    & 0     & \multicolumn{1}{c|}{0}     & \multicolumn{1}{c|}{0}                  & \multicolumn{1}{c|}{0}     \\ \cline{2-8} 
\multicolumn{1}{l|}{\multirow{2}{*}{$N_\gamma$}} & \ast & \cdots & \multicolumn{1}{c|}{\ast} & -1    & \multicolumn{1}{c|}{0}     & \multicolumn{1}{c|}{\multirow{2}{*}{$n_\gamma$}} & \multicolumn{1}{c|}{0}     \\
\multicolumn{1}{l|}{}                   & \ast & \cdots & \multicolumn{1}{c|}{\ast} & 0     & \multicolumn{1}{c|}{-1}    & \multicolumn{1}{c|}{}                   & \multicolumn{1}{c|}{0}     \\ \cline{2-8} 
\multicolumn{1}{l|}{\multirow{2}{*}{$C_0$}} & 0   & \cdots & \multicolumn{1}{c|}{0}   & \multicolumn{2}{c|}{m_0}             & \multicolumn{1}{c|}{0}                  & \multicolumn{1}{c|}{\beta_1}  \\
\multicolumn{1}{l|}{}                   & 0   & \cdots & \multicolumn{1}{c|}{0}   & 0     & \multicolumn{1}{c|}{0}     & \multicolumn{1}{c|}{0}                  & \multicolumn{1}{c|}{1}     \\ \cline{2-8} 
\multicolumn{1}{l|}{C_1}                  & 0   & \cdots & \multicolumn{1}{c|}{0}   & \multicolumn{2}{c|}{m_1}             & \multicolumn{1}{c|}{0}                  & \multicolumn{1}{c|}{2\beta_1}  \\ \cline{2-8} 
\end{array}.
$$
Notice that we dropped out the constant factor $\frac{\omega(2n_0,n_1)}{\beta_1^2+1}$ in the last column. Similarly to the previous case, one can make a development with respect to the penultimate row, and then the second resulting last rows. We recover the same determinant as in the case of a trivalent real vertex, which is also the empty determinant if $\gamma$ is an unbounded end. Hence, the conclusion follows, reducing once again to a graph with one vertex less.
\end{itemize}
	\end{proof}

	\section{Statement of the result and proof}

	\subsection{Statement of result and main proof}
	
	We now can prove Theorem \ref{theorem paper}, relating $R_{\Delta,s}$ and $N_{\Delta(s)}^{\partial,\text{trop}}$.\\

Let $\mathcal{P}_t$ be a symmetric configuration of points depending on a parameter $t$, chosen as in section \ref{classical problem}. This means that one is given a collection of series $\pm \zeta_i(t)\in\RR((t))^*\cup i\RR((t))^*$ corresponding to the coordinates of the points $\pm p_i(t)$ of $\mathcal{P}_t$ on the toric divisors. Let $\mu$ be the tropicalization of the point configuration, \textit{i.e.} for a pair of points $\pm p_i(t)$, we have $\mu_i=\text{val} \zeta_i(t)$. The correspondence theorem, proven in the previous section, provides for $t$ large enough a correspondence between the curves of $\mathcal{S}(\mathcal{P}_t)$, which are real parametrized curves of degree $\Delta$, and the parametrized tropical curves $\Gamma$ of degree $\Delta(s)$ such that $\text{ev}(\Gamma)=\mu$. This is done by allowing one to lift every first order solution to a true solution. We here assign a multiplicity to each curve of $\text{ev}^{-1}(\mu)$, so that the count of $\text{ev}^{-1}(\mu)$ with these multiplicities gives the invariant $R_{\Delta,s}$. This multiplicity happens to be proportional to the refined multiplicity of Block-G\"ottsche, thus leading to the relation stated in Theorem \ref{theorem paper}.\\

Being given a parametrized tropical curve $h_0:\Gamma_0\rightarrow N_\RR$ of degree $\Delta(s)$ with $\text{ev}(\Gamma_0)=\mu$, the task of computing its multiplicity amounts to two things. The first is to find the parametrized real tropical curves of degree $\Delta$ having the same image. The second task consists in finding the first order solutions to $\Theta(\chi,\alpha,\beta)=(1,\zeta^\Gamma)$.\\

Let $h_0:\Gamma_0\rightarrow N_\RR$ be a parametrized rational tropical curve of degree $\Delta(s)$ such that $\text{ev}(\Gamma_0)=\mu$. Let $C_\text{trop}=h_0(\Gamma_0)$ be its image, which is a plane tropical curve. We need to find the real parametrized tropical curves $(\Gamma,h)$ of degree $\Delta$ parametrizing the plane curve $C_\text{trop}$ and admitting a first order solution. The different possible real structures are described in Proposition \ref{realstruc}. 

\begin{lem}
The parametrized real tropical curves with a trivalent flat vertex cannot be the tropicalization of a family of parametrized real rational curves passing through $\mathcal{P}_t$.
\end{lem}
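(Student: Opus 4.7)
The plan is to show that the equations $\Theta(\chi,\alpha,\beta)=(1,\zeta^\Gamma)$ admit no first-order solution when $(\Gamma,h)$ contains a trivalent flat vertex; by the discussion preceding Theorem \ref{realization theorem}, this rules out any tropicalization of a family in $\mathcal{S}(\mathcal{P}_t)$ to such a $(\Gamma,h)$.

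First, I will pin down the local geometry at such a vertex. By Proposition \ref{realstruc}, the real parametrization $(\Gamma,h)$ is obtained from $(\Gamma_0,h_0)$ by splitting weight-$2$ unbounded ends, and because all non-real punctures of $\mathcal{P}_t$ lie on the single divisor $\CC E_1$, the graph $\Gamma_\text{even}$ reduces to the even unbounded ends themselves. A trivalent flat vertex $V$ of $\Gamma$ therefore appears exactly when the splitting of an even end of primitive slope $n_1$ is performed in the interior of that end, rather than at its adjacent vertex. The vertex $V$ then has three outgoing edges: the two conjugated ends $z_j^{\pm}$ of slope $n_1$, and one bounded edge $\gamma$ of slope $-2n_1$ heading back to the bulk of $\Gamma$. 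Since the splitting point lies on the axis of $\sigma$, we have $V\in\fix$, and by Proposition \ref{morphism} the co-character $\chi_V$ takes values in $\RR[[t]]^\times$.

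Second, I will specialize the moment equation at $V$ to first order. Setting $m_1=\iota_{n_1}\omega$ and taking the end $e = z_j^+$, the relevant coordinate of $\Theta=(1,\zeta^\Gamma)$ reads $\chi_V(m_1)\,\varphi_e = \zeta_j^\Gamma$, where $\zeta_j^\Gamma \in i\RR^*$ because the pair $\pm p_j$ is purely imaginary. Using the explicit formula for $\varphi_e|_{t=0}$ appearing in the proof of Theorem \ref{realization theorem},
$$\varphi_e|_{t=0} = \pm\prod_{\gamma'\neq e,\ \mathfrak{t}(\gamma')=V}(\beta_e-\beta_{\gamma'})^{\omega(n_1,n_{\gamma'})}\bigg|_{t=0}.$$
The flatness of $V$ means every remaining slope $n_{\gamma'}$ at $V$ is collinear with $n_1$, so every exponent $\omega(n_1,n_{\gamma'})$ vanishes and $\varphi_e|_{t=0}=\pm 1$. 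Hence the first-order equation collapses to $\chi_V(m_1)|_{t=0} = \pm \zeta_j^\Gamma|_{t=0}$.

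The contradiction is then immediate: the left-hand side is a nonzero real number, while $\zeta_j^\Gamma|_{t=0}$ is a nonzero purely imaginary number by the genericity of $\mathcal{P}_t$, and no choice of sign reconciles the two. Therefore no first-order solution exists, and by Theorem \ref{realization theorem} no family of curves in $\mathcal{S}(\mathcal{P}_t)$ can tropicalize to $(\Gamma,h)$. The main delicacy is the first step, namely the complete classification of trivalent flat vertices that can arise from the splitting construction of Proposition \ref{realstruc}; once that local picture is in place, the reality/purely-imaginary clash at the collapsed equation cannot be salvaged at higher order in $t$ because the leading coefficients already disagree.
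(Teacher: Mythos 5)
Your proof is correct and follows essentially the same route as the paper's: both arguments reduce to the observation that at a flat trivalent vertex the first-order moment of a complex end equals $\pm\chi_V(\iota_{n_1}\omega)$, because the only potentially non-real factor carries the exponent $\omega(n_1,n_1)=0$, and this real quantity cannot equal the purely imaginary prescribed moment. The only cosmetic difference is that you phrase the computation through the evaluation map $\Theta$ and the series $\varphi_e$, while the paper writes the morphism $f$ explicitly in the coordinate $y_w$ with the conjugated points at $\pm i$.
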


\begin{proof}
Assume that there exists a trivalent flat vertex $w$, in direction $n_{j_0}$, with two outgoing unbounded ends exchanged by the involution. Let $f_t:\CC P^1\dashrightarrow N\otimes\CC((t))^*$ be a parametrized real rational curve tropicalizing to $(\Gamma_0,h_0)$. Then, in the real coordinate $y$ such that the two conjugated points have coordinate $\pm i$ and some real point has coordinate $\infty$, the morphism takes the following form:
$$f(y)=\chi_wt^{h(w)}(y^2+1)^{n_{j_0}}\prod_{j}\left( \frac{y}{y(q_j)}-1\right)^{n_j}\in\text{Hom}(M,\CC((t))^*),$$
where $q_j$ are the other boundary points of the curve, and $\chi_w\in N\otimes\RR((t))^*$. The moment at $\pm i$ is obtained by evaluating at $\iota_{n_{j_0}}\omega\in M$, then at $\pm i$. At the first order, the moments of the exchanged unbounded ends are real: the big product takes value $1$, the coefficient $\chi_w(\iota_{n_{j_0}}\omega)$ is real, and $(y^2+1)^{n_{j_0}}$ is evaluated with $0$ exponent. This is absurd since it is supposed to be purely imaginary. Hence, we cannot have any flat vertex.
\end{proof}

\begin{lem}
\label{description real solution}
Among the real parametrized tropical curves $h:\Gamma\rightarrow N_\RR$ of degree $\Delta$ with image $C_\text{trop}$, at most one may be the tropicalization of a family of parametrized real rational curves passing through $\mathcal{P}_t$. Moreover, this real tropical curve is the tropical curve obtained from $\Gamma_0$ with the maximal splitting graph.
\end{lem}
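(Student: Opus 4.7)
The plan is to extend the obstruction from the previous lemma (no trivalent flat vertex) by showing that any parametrization $\Gamma(\mathcal{R})$ other than the maximal splitting contains a \emph{fixed} flat vertex that blocks first-order realizability. The key structural observation, specific to the present setting where all complex conjugated points lie on $\CC E_1$, is that every edge of $\Gamma_\text{even}$ has slope proportional to $n_1$. I would prove this by induction on the extension rule: every even unbounded end has slope $2n_1$, and at each extendable vertex the balancing condition forces the remaining slope to be a multiple of $n_1$ once the others are. Consequently every non-stem vertex of $\Gamma_\text{even}$ has all its incident slopes proportional to $n_1$ and is therefore flat, whereas each stem, being non-extendable, has at least two edges leaving $\Gamma_\text{even}$ with slopes not proportional to $n_1$ and so is not flat.

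This identifies the maximal splitting as the unique admissible $\mathcal{R}$ for which no non-stem vertex of $\Gamma_\text{even}$ lies in $\Gamma_\text{fix}(\mathcal{R})$, namely $\mathcal{R}=\{\text{stems}\}$; any other admissible choice leaves some non-stem vertex $V\in\Gamma_\text{even}$ in $\fix$, producing a flat real vertex of $\Gamma(\mathcal{R})$ all of whose incident slopes are proportional to $n_1$.

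The heart of the proof is then to rule out such flat fixed vertices by a first-order computation generalizing the previous lemma. I would pick a complex conjugated marked point $q$ accessible from $V$ through the doubled part of $\Gamma_\text{even}$ (one exists because every branch of $\Gamma_\text{even}$ reaches an even end), let $v_q$ be its adjacent vertex, and evaluate the moment $\mu_q$ against $m_1=\iota_{n_1}\omega$. Iterating Proposition \ref{morphism}(iii) along the path from $V$ to $v_q$ yields
\[
\chi_{v_q}(m_1)=\chi_V(m_1)\cdot\prod_{\gamma}\phi_\gamma(m_1)\,\alpha_\gamma^{n_\gamma(m_1)},
\]
where the product ranges over the edges of this path. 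Since every such $\gamma$ lies in $\Gamma_\text{even}$, one has $n_\gamma(m_1)=\omega(n_1,n_\gamma)=0$, which kills the $\alpha$-factors; and a short valuation analysis based on Proposition \ref{propcalcul}(vi) reduces each $\phi_\gamma(m_1)$ modulo $t$ to $\pm 1$, the surviving contributions coming only from marked points outside $I^\infty_\tg$, whose $y_\tg$-coordinates have strictly negative valuation. A parallel reduction of $\varphi_q$ gives $\varphi_q\equiv\pm 1\pmod t$. Consequently the first-order value of $\mu_q$ is $\pm\chi_V(m_1)(0)$, which is real because $V$ is fixed, contradicting the requirement that $\zeta_q^\Gamma$ be purely imaginary.

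Combining these steps shows that only the maximal splitting admits first-order solutions, and since this splitting is uniquely determined by $\Gamma_0$, the lemma follows. The main technical obstacle I expect is the first-order reduction of $\phi_\gamma(m_1)$ and $\varphi_q$ throughout the flat doubled region: one must verify that every marked point giving a non-trivial exponent in either of these products is necessarily upstream of the relevant vertex, so that Proposition \ref{propcalcul}(vi)--(vii) forces the corresponding factor to specialize to $\pm 1$. Once this bookkeeping is done, the obstruction becomes a direct higher-valence generalization of the trivalent flat-vertex argument of the previous lemma.
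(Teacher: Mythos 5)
Your core obstruction is the right one and coincides with the paper's: a non-maximal splitting forces a flat fixed vertex, and at such a vertex the first-order moment of a complex marking equals $\pm\chi_V(m_1)$ times a unit specializing to $\pm1$, hence is real, contradicting the requirement that it be purely imaginary. But your structural step is both heavier than needed and, as literally written, has a hole. The paper's key observation, which you miss, is that because all complex markings lie on the single divisor $\CC E_1$, all even unbounded ends have the same direction $2n_1$, so no two of them can be adjacent to a common trivalent non-flat vertex of the simple curve $\Gamma_0$; hence there are \emph{no extendable vertices at all} and $\Gamma_\text{even}$ consists solely of the even unbounded ends. The only non-maximal admissible sets $\mathcal{R}$ therefore place a splitting point in the interior of an even end, creating a \emph{trivalent} flat fixed vertex --- exactly the configuration already excluded by the previous lemma, so no new computation is required. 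Your characterization ``any other admissible choice leaves some non-stem vertex $V\in\Gamma_\text{even}$ in $\fix$'' is false in this setting: $\Gamma_\text{even}$ has no non-stem vertices, and the obstructing flat vertex is the \emph{newly created} vertex at the point of $\mathcal{R}$, not a pre-existing vertex of $\Gamma_\text{even}$. Followed literally, your dichotomy would find no such $V$ for any splitting and would fail to rule anything out; the argument needs this repair.

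That said, your generalized first-order computation does subsume the case actually needed (take $V=v_q$ and an empty path), and the technical obstacle you flag --- that $\phi_\gamma(m_1)\equiv\pm1\pmod t$ requires every marking of $I^\infty_{\tg}$ in the doubled region to be an even end, so that the exponents $\omega(n_1,n_i)$ vanish --- is resolvable, since a doubled component of $\Gamma\setminus\mathcal{R}$ contains only even ends (this is shown in the proof of Proposition \ref{realstruc}). So your route would extend to a $\Gamma_\text{even}$ with genuine interior structure, which is more than the lemma requires; but for the lemma as stated, once one notes that $\Gamma_\text{even}$ has no extendable vertices, the reduction to the previous lemma is immediate and the extra machinery is unnecessary.
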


\begin{proof}
There is an infinite number of parametrized curves with image $C_\text{trop}$, obtained by splitting the graph of even edges and described in Proposition \ref{realstruc}. All the unbounded ends of $C_\text{trop}$ associated to the complex markings are double edges near infinity since they correspond to two distinct marked points. Thus they belong to $\Gamma_\text{even}$. Since all the complex markings are on the same divisor, they have the same direction and they cannot meet at a common vertex. Therefore, there are no extendable vertex  and the graph $\Gamma_\text{even}$ only consists of the even unbounded ends. The only possibility is that the double ends separates itself at a trivalent flat vertex, sent somewhere on the unbounded end of $C_\text{trop}$. However, this is forbidden by the previous lemma. Therefore, all the double ends split and there is a unique possibility.
\end{proof}

We have proven that for $C_\text{trop}$, there is a unique real parametrized tropical curve $h:\Gamma\rightarrow N_\RR$ of degree $\Delta$ with image $C_\text{trop}$ that can be the tropicalization of a family of parametrized real rational curves. We now count the first order solutions, which lead to true solutions thanks to the correspondence theorem. The count of these solutions gives the multiplicity used to count tropical curves.

\begin{prop}\label{firstordersol}
Let $\mathcal{P}_t$ be a symmetric real configuration of points as previously chosen, tropicalizing on a family of moments $\mu$. Let $h_0:\Gamma_0\rightarrow N_\RR$ be a parametrized tropical curve of degree $\Delta(s)$ having moments $\mu$, and let $h:\Gamma\rightarrow N_\RR$ be the associated real parametrized tropical curve without flat vertex such that $\text{ev}(\Gamma)=\mu$. Vertices of $\Gamma$ and $\Gamma_0$ are canonically identified. Let $W_1,\dots,W_{s}$ be the quadrivalent vertices of $\Gamma$, adjacent to the complex unbounded ends, let $m_{W_i}$ denotes their complex multiplicity as a trivalent vertex of $\Gamma_0$.
Then there are precisely $2^{m-2s}\prod m_{W_i}$ oriented real curves passing through the symmetric configuration $\mathcal{P}_t$ and tropicalizing to $(\Gamma,h)$. Their refined count according to the quantum index and sign $\sigma(\overrightarrow{C})$ is given by
$$m'_\Gamma=4\prod_1^{s}\frac{\q{m_{W_i}}}{q-q^{-1}}\prod_{V\neq W_i}(\q{m_V}).$$
\end{prop}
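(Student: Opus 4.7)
By Theorem~\ref{realization theorem}, each first-order solution of $\Theta\equiv(1,\zeta^\Gamma)\bmod t$ lifts uniquely to a curve in $\mathcal{S}(\mathcal{P}_t)$ tropicalizing to $(\Gamma,h)$; the proof therefore reduces to counting first-order solutions and tracking the pair $(\sigma(\overrightarrow{C}),k(\overrightarrow{C}))$ attached to each. I would carry out this count by traversing $\Gamma$ as a tree rooted at $x_r$, in an order compatible with $\prec$, and solving the local first-order equations one vertex at a time. The Jacobian analysis in the proof of Theorem~\ref{realization theorem} already shows that this system is block-triangular along such a traversal: at each vertex, the local equations decouple from the rest of the tree once the incoming data from the already-processed part is fixed. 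Both the cardinality of the solution set and the refined sum $\sum_{\overrightarrow{C}}\sigma(\overrightarrow{C})q^{k(\overrightarrow{C})}$ therefore factor as a product of vertex contributions, modulo a global factor $4$ stemming from the 2-torsion of the torus.

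The local vertex contributions are precisely those computed in section~\ref{local computations}. At a trivalent real vertex $V\neq W_i$ with complex multiplicity $m_V$, the local curve is the image of a real line by a monomial map of determinant $m_V$; gathering the local first-order solutions with their orientations and signs gives a vertex contribution $\q{m_V}$ to the refined sum, exactly as in Mikhalkin's argument for $R_{\Delta,(0)}=\q{1}^{m-2}N_\Delta^{\partial,\text{trop}}$. At a quadrivalent vertex $W_i$ with two complex-conjugate outgoing ends, the analysis of section~\ref{local computations} produces $m_1=m_{W_i}/2$ families of curves $\psi_k$ whose quantum indices are the distinct values $2k+1-m_1$ for $k\in[\![0;m_1-1]\!]$ (the primitivity of $\Delta$ fixes the normalization $m_3-m_2=1$ in the local model). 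Summing these with the appropriate orientation signs yields the vertex contribution
$$\sum_{k=0}^{m_1-1}q^{2k+1-m_1}=\frac{q^{m_1}-q^{-m_1}}{q-q^{-1}}=\frac{\q{m_{W_i}}}{q-q^{-1}}.$$

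Combining these local contributions with the global factor $4$ coming from the torus 2-torsion $(x,y)\mapsto(\pm x,\pm y)$ implicit in the definition $R_\Delta(\mathcal{P})=\tfrac{1}{4}\sum\sigma(\overrightarrow{C})q^{k(\overrightarrow{C})}$ produces the formula for $m'_\Gamma$. The total count $2^{m-2s}\prod m_{W_i}$ of oriented curves is then recovered by collecting the local counts along the traversal: each of the $m-2s$ real markings provides a binary sign choice (between $p_i$ and $-p_i$), and each $W_i$ contributes $m_{W_i}$ distinct oriented local sheets.

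The main obstacle is the multiplicativity of $\sigma(\overrightarrow{C})$ across vertices. The additivity of the quantum index near the tropical limit is available from~\cite{mikhalkin2017quantum}, but $\sigma$ is defined through the degree of the logarithmic Gauss map on the whole real curve, so its decomposition as a product of local signs is not automatic: one must express this rotation number as a sum of vertex contributions (via the dual subdivision of $P_\Delta$) and match each with the local sign extracted from the first-order equations along the same inductive path used to solve the system.
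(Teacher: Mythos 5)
Your overall strategy is the one the paper uses: reduce to counting first-order solutions of $\Theta(\chi,\alpha,\beta)=(1,\zeta^\Gamma)$ via Theorem \ref{realization theorem}, organize the count as an induction along the tree rooted at $x_r$, and feed in the local computations of section \ref{local computations} to obtain the factors $\q{m_V}$ at the trivalent vertices and $\frac{\q{m_{W_i}}}{q-q^{-1}}$ at the quadrivalent ones, with the global factor $4$ accounted for by the deck transformations $(x,y)\mapsto(\pm x,\pm y)$. Your evaluation of the quadrivalent contribution (geometric series over the $m_1$ local solutions, with $m_3-m_2=1$ and $m_{W_i}=2m_1$) agrees with the paper.

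The gap is the one you flag yourself: you never establish that the refined sum $\sum\sigma(\overrightarrow{C})q^{k(\overrightarrow{C})}$ factors over the vertices, and in particular that $\sigma(\overrightarrow{C})$ does. This is not cosmetic --- the minus sign inside each factor $\q{m_V}$ is precisely the assertion that the two local solutions over a trivalent vertex carry opposite signs $\sigma$, so without this step the formula for $m'_\Gamma$ is not proved. The paper does not obtain it by a global decomposition of $\text{Rot}_\text{Log}$ via the dual subdivision, as you propose; it obtains it inside the same induction that solves the system. Concretely: once the inductive step determines $\chi_{\tg}$, and hence $\chi_V(m_\gamma)=\phi_\gamma(m_\gamma)\chi_{\tg}(m_\gamma)$, the residual equation $\alpha_\gamma^{\langle n_\gamma,\tilde m_\gamma\rangle}=\chi_V(\tilde m_\gamma)/\bigl(\phi_\gamma(\tilde m_\gamma)\chi_{\tg}(\tilde m_\gamma)\bigr)$ always admits exactly two solutions (either two sign choices for $\chi_V(\tilde m_\gamma)$ each giving one $\alpha_\gamma$, or one forced sign giving two values of $\alpha_\gamma$, according to the parity of the lattice length of $n_\gamma$); these two gluings respectively increase and decrease the logarithmic rotation number by one, while the attached local curves have quantum indices $\pm m_V/2$, whence the relative sign. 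At a quadrivalent vertex the paper computes $\text{Rot}_\text{Log}(\psi_k)=\det A\cdot\text{Rot}_\text{Log}(\varphi_k)=0$, so all $m_1$ local solutions enter with $\sigma=+1$ and no cancellation occurs there. The route you sketch instead (local curves glued along edges with matching phases and a sum of local rotation numbers) is essentially the paper's alternative proof via \cite{mikhalkin2005enumerative}; either way the sign bookkeeping must be carried out edge by edge rather than deferred to a separate additivity statement.
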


Before proving this proposition, we can now prove Theorem \ref{theorem paper}.

\begin{proof}[Proof of Theorem \ref{theorem paper}]
This is a consequence of the correspondence theorem that states that each of the first order solutions lifts to a unique solution, given by Proposition \ref{firstordersol}, and an easy computation between the multiplicities: we obtain $R_{\Delta,s}$ by counting curves with multiplicities $\frac{1}{4}m'_\Gamma$. The multiplicity $\frac{1}{4}m'_\Gamma$ is obtained from $m_\Gamma^q$ by clearing the denominators of the $m-2-s$ vertices and dividing by the terms of the $s$ quadrivalent vertices:
$$\frac{1}{4}m'_\Gamma=\frac{(\qd)^{m-2-s}}{(q-q^{-1})^{s}} m^q_\Gamma.$$
Therefore, one has
$$R_{\Delta,s}=\frac{(\qd)^{m-2-s}}{(q-q^{-1})^{s}}N^{\partial,\text{trop}}_{\Delta(s)}.$$
Using the identity $q-q^{-1}=(\qd)(q^\frac{1}{2}+q^{-\frac{1}{2}})$, we get that
$$R_{\Delta,s} =\frac{(\qd)^{m-2-2s}}{(q^\frac{1}{2}+q^{-\frac{1}{2}})^{s}}N^{\partial,\text{trop}}_{\Delta(s)}.  $$
\end{proof} 

In \cite{gottsche2019refined} L. G\"ottsche and F. Schroeter proposed a refined way to count so-called \textit{refined Broccoli curves} having fixed ends, and passing through a fixed configuration of "real and complex" points. In the case where there are only marked ends and no marked points, this count coincides with the count of plane tropical curves passing through the configuration with usual Block-G\"ottsche multiplicities from \cite{gottsche2014refined} up to a multiplication by a constant term depending on the degree and easily computed. More precisely, provided there are no marked points, the refined Broccoli multiplicity is just the refined multiplicity from \cite{gottsche2014refined} enhanced by a product over the ends of weight higher than $2$, coinciding with this aformentionned constant term. In our case, since the only multiple edges are marked and of weight $2$ (only one real unbounded end is unmarked and of weight one), this factor is $\frac{q+q^{-1}}{q^{1/2}+q^{-1/2}}$ for each of the $s$ ends. If we denote by $BG_{\Delta(s)}(q)$ the refined invariant obtained in \cite{gottsche2019refined}, then we have the relation
$$R_{\Delta,s}(q) = \frac{(\qd)^{m-2-2s}}{(q+q^{-1})^{s}} BG_{\Delta(s)}(q).$$

We now prove Proposition \ref{firstordersol}.

\begin{proof}[Proof of Proposition \ref{firstordersol}]
The proof is made with an induction on the number of vertices of the curve $\Gamma$. Exceptionally, to suit the induction, a vector $n_j$ of $N$ directing an unbounded end $e_j$ may not be primitive. In that case, we denote by $m_j=\omega\left(\frac{n_j}{l(n_j)},-\right)$ the dual vector, but still of lattice length $1$. Let $V$ be a vertex adjacent to two real ends, or one real end and two complex ends. Let $\gamma$ be the edge, maybe unbounded, with $\hg=V$.

\begin{itemize}[label=-]
\item If $V$ is a real vertex adjacent to two real unbounded ends indexed by $0$ and $1$, then we have to solve for $\chi_V:M\rightarrow\RR^*$ the following system:
$$\left\{\begin{array}{l}
\chi_V(m_0)=\pm\zeta_0^\Gamma\in\RR^* \\
\chi_V(m_1)=\pm\zeta_1^\Gamma\in\RR^* \\
\end{array}\right. .$$
Recall that the vectors $n_0$ and $n_1$ might not be primitive, but $m_0$ and $m_1$ are. This system leads to $4$ solutions: if $(e_1^*,e_2^*)$ is a basis of $M$, the absolute value of $\chi_V(e_1^*)$ and $\chi_V(e_2^*)$ is uniquely determined, while the sign may be chosen arbitrarily. Notice that some choices of signs for the right-hand side of the system may provide several solutions, while other provide none. Let $m_\gamma$ be the primitive vector dual to $n_\gamma$. Let $\tilde{m}_\gamma$ be such that $(m_\gamma,\tilde{m}_\gamma)$ is a basis of $M$. These $4$ solutions separate themselves into two groups of $2$, according to the sign of $\chi_V(m_\gamma)$. If $\gamma$ is unbounded, this closes the proof.\\

Now, if $\gamma$ is bounded, we have the equation
$$\phi_\gamma\cdot\frac{\chi_\tg}{\chi_V}\cdot\alpha_\gamma^{n_\gamma}=1\in N\otimes\RR^*.$$
We evaluate at $m_\gamma$, leading to
$$\phi_\gamma(m_\gamma)\chi_\tg(m_\gamma)=\chi_V(m_\gamma)\in\RR^*.$$
Recall that according to the choice of signs of $\pm\zeta_j$, the sign of $\chi_V(m_\gamma)$ may change. Let replace the bounded edge $\gamma$ by an unbounded end with direction $n_\gamma$, leading to a new parametrized tropical curve $(\Gamma',h')$. The above  equation is the equation associated to this new unbounded end in $\Gamma'$, in the corresponding system $\Theta(\chi,\alpha,\beta)=(1,\zeta^\Gamma)$. Thus, we can proceed by induction. Let $4R$ denote the refined signed count of oriented curves lifting $\Gamma'$. These $4R$ curves separate themselves into four groups of $R$ according to the value of the signs the function $\phi_\gamma\cdot\chi_\tg$ takes on the basis $(m_\gamma,\tilde{m}_\gamma)$.\\

Last, we need to solve for $\alpha_\gamma$. By evaluating at $\tilde{m}_\gamma$, we get
$$\alpha_\gamma^{\langle n_\gamma,\tilde{m}_\gamma\rangle}=\frac{\chi_V(\tilde{m}_\gamma)}{\phi_\gamma(\tilde{m}_\gamma)\cdot\chi_\tg(\tilde{m}_\gamma)}.$$
The solving as well as the number of solutions depends on the sign of the right-hand side.
	\begin{itemize}[label=$\star$]
	\item If $n_\gamma$ has odd integer length, then we solve uniquely for $\alpha_\gamma$ for each possible sign of $\chi_V(\tilde{m}_\gamma)$. The sign of $\chi_V(m_\gamma)$ is already determined since we have $\phi_\gamma(m_\gamma)\chi_\tg(m_\gamma)=\chi_V(m_\gamma)$. Thus, each of the oriented curves in each of the groups have two possible solutions for $\chi_V$. This corresponds to the gluing of two possible curves, one increasing the logarithmic rotation number by one, the other decreasing it by one. In each case, the orientation of the curve propagates, and the signed count becomes
	$$4\times (q^{m_V/2}-q^{-m_V/2})R.$$
	
	\item If $n_\gamma$ has even integer length, then the sign of $\chi_V(m_\gamma)$ is still determined, and the sign of $\chi_V(\tilde{m}_\gamma)$ is forced in order to have at least one solution for $\alpha_\gamma$. In that case, we have two. The two possible choices of $\alpha_\gamma$ correspond to the two ways of gluing a curve over $V$ to one of the $4R$ curves, when such a gluing is possible. One of these choices decreases the logarithmic rotation number by one while the other increases it by one. The signed count then becomes again
	$$4\times (q^{m_V/2}-q^{-m_V/2})R.$$
	\end{itemize}

\item If $V$ is adjacent to two complex ends and a real end, respectively indexed by $0$ and $1$, we then have to solve for $\beta_1$ and $\chi_V$ the system
$$\left\{\begin{array}{l}
\chi_V(m_1)(\beta_1^2+1)^{\langle n_0,m_1\rangle}=\zeta_0^\Gamma\in i\RR^* \\
\chi_V(m_0)(i-\beta_1)^{\langle n_1,m_0\rangle}=\pm\zeta_1^\Gamma\in\RR^* \\
\end{array}\right. .$$
This system was already solved in section \ref{local computations}. Assume that the degree $\{n_0,2n_1,-n_0-2n_1\}$ is equivalent to $\Delta(m_0,m_1,m_2)$ as in section \ref{local computations}, ignoring the temporary conflict of notation $m_i$. (They are integers in $\Delta(m_0,m_1,m_2)$ and co-characters in $\chi_V(m_i)$) The logarithmic rotation number of the solutions is equal to $0$, so each solution is counted with a positive sign. The refined count of the solutions from section \ref{local computations} is equal to $0$, $1$ or $2$ times the following sum, which covers all the possible values of the quantum index:
$$\sum_{k=0}^{m_1-1}q^{(m_3-m_2)(2k+1-m_1)}=\frac{q^{(m_3-m_2)m_1-q^{-(m_3-m_2)m_1}}}{q^{m_3-m_2}-q^{-({m_3-m_2})}}=\frac{\q{m_W}}{q-q^{-1}},$$
	since $m_3-m_2=1$, and the complex multiplicity $m_W$ satisfies $m_W=2m_1$, which is the multiplicity of the vertex. Accounting for both possible orientations, and both choices of signs for the $\zeta_i$, this closes the proof if $\gamma$ is an unbounded end. Otherwise, we then use the equation
$$\phi_\gamma\cdot\frac{\chi_\tg}{\chi_V}\cdot\alpha_\gamma^{n_\gamma}=1\in N\otimes\RR^*,$$
just as in the previous step. We solve for the other unknowns inductively, then for $\alpha_\gamma$. The same disjunction provides the new signed count
$$4\frac{q^{m_V/2}-q^{-m_V/2}}{q-q^{-1}}R.$$
\end{itemize}
\end{proof}

\subsection{Alternative proof}

So far, we have proven a correspondence theorem using the approach of Tyomkin in \cite{tyomkin2012tropical}. One could also carry a proof of Theorem \ref{theorem paper} using the approach of Mikhalkin \cite{mikhalkin2005enumerative,mikhalkin2017quantum}, or Shustin \cite{shustin2004tropical}. Both adopt a description of plane curves by polynomial equations rather than a parametrization.\\ 

Briefly, given a real parametrized tropical curve, this method consists in finding a collection of plane curves indexed by the vertices of the tropical curves, and a way of gluing them along the edges of the curve. We refer the reader to \cite{mikhalkin2005enumerative,mikhalkin2017quantum,shustin2004tropical} for details and proofs. Here, we recover the multiplicity $m'_\Gamma$ by counting the possible families of curves indexed by the vertices, and the number of gluings. Using the aforementioned approach to the correspondence theorem, this computation reproves Theorem \ref{theorem paper} through a new proof of Proposition \ref{firstordersol}.\\

\begin{proof}[Alternative Proof of Proposition \ref{firstordersol}] 
We make an induction on the number of vertices. Thus, we initialize with curves $\Gamma$ having a unique vertex, trivalent or quadrivalent.\\

Following \cite{mikhalkin2005enumerative}, to compute the multiplicity, one needs to count (in a suitable way) the local curves over the vertices of $\Gamma$, and the number of ways to glue them together. In this proof, we do not assume the vectors of $\Delta$ to be primitive.

	\begin{itemize}[label=-]
	\item If there is only one trivalent vertex $V$ in $\Gamma$, then we are looking for curves maximally tangent to the toric divisors and passing through two pairs of opposite real points. There are $4$ such curves, which are exchanged by the action of the deck transformation group $\{\pm 1\}^2$ on the associated toric surface. Notice that if we specify the points in the pairs the number of curves may vary, but if we consider both points in the pair, the number remains the same. These $4$ curves lead to $8$ oriented curves. Half of them have logarithmic Gauss degree $1$ and the other half has degree $-1$, leading to signs $\sigma(\RR C)=1$ or $-1$. Therefore the signed contribution is $4(\q{m_V})$.
	\item If there is only one quadrivalent vertex $W$, we look for curves passing through one pair of conjugated imaginary points on one divisor, and a pair of opposite real points. Assume that the degree of the vertex is $\Delta(m_1,m_2,m_3)$. Then, as the unbounded non-real ends are of weight $2$, we have $m_3-m_2=1$, and the complex multiplicity is $m_W=2m_1$. We have seen that there are always $2m_1$ curves passing through the configuration: either $m_1$ for each of the real points in the pair, or $2m_1$ and $0$. Therefore, there are $4m_1$ oriented curves going through the pair. Moreover, their quantum index is known. The logarithmic rotation number $\text{Rot}_\text{Log}$ can be computed thanks to the same monomial map that allowed us to compute their quantum index: if $A$ denotes the matrix of the monomial map, then
	$$\text{Rot}_\text{Log}(\psi_k) = \det A\times\text{Rot}_\text{Log}(\varphi_k).$$
	As the logarithmic rotation number of $\varphi_k$ is $0$, all the curves have logarithmic rotation number zero and therefore $\sigma(\overrightarrow{C})=1$. When accounting for both orientations, the desired count is
	$$4\sum_{k=0}^{m_1-1}q^{(m_3-m_2)(2k+1-m_1)}=4\frac{q^{(m_3-m_2)m_1-q^{-(m_3-m_2)m_1}}}{q^{m_3-m_2}-q^{-({m_3-m_2})}}=4\frac{\q{m_W}}{q-q^{-1}},$$
	since $m_3-m_2=1$, and the complex multiplicity $m_W$ satisfies $m_W=2m_1$, which is the lattice area of the dual triangle.
\end{itemize}

Now that the initialization is done, assume that $\Gamma$ has more than one vertex. Let $V$ be a vertex adjacent to two unbounded real ends, or one real end and two complex ends. The last edge adjacent to $V$, which is bounded, is denoted by $\gamma$. Let $\Gamma'$ be the parametrized tropical curve obtained by deleting this vertex and replacing the edge $\gamma$ heading to $V$ by an unbounded end with same direction. The Menelaus rule 
allows us to define a pair of real opposite moments associated to the new unbounded end. This moment is defined by the condition that the symmetric configuration composed by the pairs of points of the edges adjacent to $V$ satisfies the Menelaus condition. We get a new symmetric configuration of points $\mathcal{P}_t'$, indexed by the ends of $\Gamma'$.\\

Let $4R$ be the refined count of oriented curves tropicalizing on $\Gamma'$, passing through the symmetric configuration $\mathcal{P}_t'$. We now have to glue together the oriented curves above $\Gamma'$, and the curves over the vertex $V$. According to \cite{mikhalkin2005enumerative}, such a gluing is possible only if the \textit{phases} agree, and if the edge has weight two, there are two ways to do it. Recall that the \textit{phase} of the curve $C_V$ above $V$ with respect to the edge $\gamma$ is defined as follows: the edge $\gamma$ corresponds to an intersection point $p$ of $C_V$ with the toric boundary, the phase is the (set of) quadrants of $(\RR^*)^2$ in which the curve sits in a neighborhood of this intersection point. This means the following: the set of quadrants is identified with $\mathbb{F}_2^2$, and
\begin{itemize}[label=-]
\item If the vector $n_\gamma$, which directs $\gamma$ has odd lattice length, then the curve passes through the corresponding toric divisor and changes of quadrant at $p$. The phase is the element of $\mathbb{F}_2^2/\langle n_\gamma\rangle$ that corresponds to these quadrants.
\item If the vector $n_\gamma$ has even lattice length, the curve stays on the same side of the toric divisor, thus in the same quadrant. In that case, the phase is an element of $\mathbb{F}_2^2$.
\end{itemize}

In each case we inquire for the refined count over the global tropical curve $\Gamma$. We make a disjunction over the type of $V$, which can either be a trivalent one, or a quadrivalent one (\textit{i.e.} one of the vertices $W_i$), and $\gamma$ can be an odd or an even edge. According to \cite{mikhalkin2005enumerative}, if the edge is even, we have two ways of gluing the curves together when the phases agree. We use the action of the deck transformation group $\{\pm 1\}^2$.

\begin{itemize}[label=$\bullet$]
\item Assume that $V$ is trivalent, and the bounded edge adjacent to $V$ is odd. Then, there are two real opposite points, corresponding to the two distinct phases that the edge can have, where the gluing can happen, and they are exchanged by the deck transformation group $\{\pm 1\}^2$. Therefore, over $\Gamma'$, there are $2R$ oriented curves for each of the phases (both add up to the total $4R$ oriented curves). There are $4$ curves above the vertex $V$, two over each real phase. If a pair with compatible phases is chosen, we have a unique way of gluing. Moreover, for any possible gluing of an oriented curve and a curve, the orientation of the oriented curve extends to the new global curve. The curves above $V$ thus get an orientation. The two oriented curves obtained this way have opposite quantum indices, and one increases by one the logarithmic rotation number while the other decreases it by one. Finally, the total contribution is
$$(q^\frac{m_V}{2}-q^{-\frac{m_V}{2}})2R+(q^\frac{m_V}{2}-q^{-\frac{m_V}{2}})2R=4(q^\frac{m_V}{2}-q^{-\frac{m_V}{2}})R.$$

\item Assume that $V$ is trivalent, and $\gamma$ is an even edge. We have $4$ possible phases, exchanged by the deck transformation group. For each of these phases, there are $R$ oriented curves over $\Gamma'$, and just one curve over $V$. Each time, there are two ways of gluing the curves: one that increases the logarithmic rotation number, and one that decreases it. We thus get
$$(q^\frac{m_V}{2}-q^{-\frac{m_V}{2}})R+(q^\frac{m_V}{2}-q^{-\frac{m_V}{2}})R+(q^\frac{m_V}{2}-q^{-\frac{m_V}{2}})R+(q^\frac{m_V}{2}-q^{-\frac{m_V}{2}})R=4(q^\frac{m_V}{2}-q^{-\frac{m_V}{2}})R.$$

\item Assume that $V=W$ is a quadrivalent vertex, and that $\gamma$ is an odd edge. Assume that the dual triangle is equivalent to $\Delta(m_1,m_2,m_3)$. There are two possible phases. Over $\Gamma'$, there are $2R$ oriented curves for each of the phases, while there are $m_1$ curves over $W$ for each of the phases. In each case the gluing is unique and we get
$$\left(\frac{\q{m_W}}{q-q^{-1}}\right)2R+\left(\frac{\q{m_W}}{q-q^{-1}}\right)2R=4\left(\frac{\q{m_W}}{q-q^{-1}}\right)R.$$

\item Finally, if $V=W$ is a quadrivalent vertex and $\gamma$ is an even edge, there are four phases to consider, each one with $R$ oriented curves over $\Gamma'$. This time $m_1$ is even, and the distribution of the phases for the curves above the vertex might be a little trickier. We have seen that if the boundary points are fixed, there are either $2m_1$ curves above $W$ for one of the points and zero for the other, or $m_1$ for each of them. In each case this does not give the complete phase at the intersection point. Anyway, for each of these curves there are $R$ oriented curves that we can glue, and this can happen in two ways. Thus, we still get
$$2\times 2\left(\frac{\q{m_W}}{q-q^{-1}}\right)\times R=4\left(\frac{\q{m_W}}{q-q^{-1}}\right)R.$$
\end{itemize}
Finally, we recover the formula for $m'_\Gamma$.
\end{proof}

\bibliographystyle{plain}
\bibliography{biblio}

{\ncsc Institut de Math\'ematiques de Jussieu - Paris Rive Gauche\\[-15.5pt] 

Sorbonne Universit\'e\\[-15.5pt] 

4 place Jussieu,
75252 Paris Cedex 5,
France} \\[-15.5pt]

\medskip

{\it and}

\medskip

{\ncsc D\'epartement de math\'ematiques et applications\\[-15.5pt]

Ecole Normale Sup\'erieure\\[-15.5pt]

45 rue d'Ulm, 75230 Paris Cedex 5, France} \\[-15.5pt]

{\it E-mail address}: {\ntt thomas.blomme@imj-prg.fr}

\end{document}